\documentclass[11pt]{amsart}
\usepackage{amsmath,amsthm,amscd,amsfonts,amssymb,graphicx,color,MnSymbol,enumerate,tikz-cd}
\usepackage{hyperref,cleveref}
\usepackage{xcolor}
\usepackage[labelformat=empty]{caption,subcaption}
\usepackage{inputenc}
\usepackage{stmaryrd}

\newtheorem{theorem}{Theorem}[section]
\newtheorem{lemma}[theorem]{Lemma}
\newtheorem{proposition}[theorem]{Proposition}
\newtheorem{corollary}[theorem]{Corollary}
\theoremstyle{definition}
\newtheorem{definition}[theorem]{Definition}

\newtheorem{conjecture}[theorem]{Conjecture}

\theoremstyle{remark}
\newtheorem{remark}[theorem]{Remark}

\numberwithin{equation}{section}

\allowdisplaybreaks

\begin{document}
	
	\title[On simple extensions generated by Key Polynomials]{ On simple extensions generated by Key Polynomials}
	\author[Nikita Dwivedi]{Nikita Dwivedi}
	\address{Department of Mathematics\\ University of Delhi\\  Delhi-110007, India.}
	\email{ndwivedi@maths.du.ac.in}
	\author[Anuj Bishnoi]{Anuj Bishnoi$^\ast$}
	\address{Department of Mathematics\\  University of Delhi \\   Delhi-110007, India.}
	\email{abishnoi@maths.du.ac.in}
	
	\begin{abstract}
Let $K^h$ be a Henselization of a valued field $(K,v),$ $w$ a valuation-transcendental extension of $v$ to $K[x],$ and $\phi$ a key polynomial for $w$. In this paper, we prove that if $\phi$ is irreducible over $K^h,$ then the simple extension $L|K,$ generated by a root of any key polynomial for $w,$ is unibranched and its defect is independent of the choice of key polynomial. As a consequence, we generalize some well-known results for Henselian valued fields to arbitrary valued fields. Moreover, we provide some criteria for $L|K$ to be unibranched and defectless in terms of Mac Lane-Vaqui\'e chains, complete sequences of abstract key polynomials and saturated distinguished chains. As an application, we generalize a classical result of Ore about the existence of $p$-regular generators for number fields.
	\end{abstract}
	\subjclass[2020]{ Primary: 13A18; Secondary: 12J20, 12J10}
	\keywords {Abstract key polynomial, Defect, Depth, Distinguished pair, $j$-invariant, Key polynomial, Valuation}
	\thanks{$^\ast$Corresponding author, E-mail address: abishnoi@maths.du.ac.in}
	\maketitle
	
	
	\section{Introduction}
Let $(K,v)$ be a valued field, $\bar{v}$ be an extension of $v$ to the fixed algebraic closure $\overline{K}$ of $K,$ and $K^h$ be the Henselization of $K$ with respect to $\bar{v}.$ Let $w$ be a valuation-transcendental extension of $v$ to $K[x],$ and $\overline{w}$ be a common extension of $w$ and $\bar{v}$ to $\overline{K}[x].$ In 2021, Mahboub et al. gave an upper bound on the number of common extensions $\overline{w}$ (\cite[Corollary 5.7]{W}). In 2025, Nart generalized this upper bound and gave the description of common extensions in terms of key polynomials for $w$ (see Theorem \ref{t2}). In Section 3, we provide an explicit formula for the number of common extensions $\overline{w}$ of $w$ and $\bar{v},$ in terms of the $j$-invariant (see Theorem \ref{p23}). Using this, we obtain a relation among the different key polynomials for $w$, including their irreducibility and degrees over $ K^h.$ In particular, we prove that if any key polynomial for $w$ is irreducible over $K^h,$ then the simple extension generated by a root of any key polynomial for $w$ is unibranched (Proposition \ref{bp2}).
	
	In 2023, Nart and Novacoski introduced the defect of a valuation on $K[x]$ and proved that the defect of a valuation with nontrivial support is equal to a Henselian defect of the corresponding simple extension \cite[Theorem 6.14]{EN3}. Given any valuation-transcendental extension $w$ and its minimal degree key polynomial $\phi,$ we can always construct a valuation $v_\phi$ with nontrivial support $\phi K[x].$ Also from the Mac Lane-Vaqui\'e chain of $w,$ we can obtain the Mac Lane-Vaqui\'e chain of $v_\phi.$ Keeping this in mind, we prove in Section 4 that the defect of $w|v$ is equal to the defect of $v_\phi|v.$ Therefore, we have that the defect of $w|v$ is also equal to a Henselian defect of the simple extension generated by a root of $\phi$ (see Theorem \ref{odt2}), which in turn extends the result of Nart and Novacoski to valuation-transcendental extensions. As a consequence, we obtain that if $\phi$ is irreducible over $K^h,$ then the defect of a simple extension, generated by a root of any key polynomial for $w,$ is an invariant for $w$ (Theorem \ref{2.12}).
	
	Distinguished pairs were first introduced by Popescu and Zaharescu in \cite{PZ} for local fields and later generalized to arbitrary valued fields. The characterization of distinguished pairs in terms of key polynomials was given by Aghigh in \cite{AN1} for Henselian valued fields. In Section 5, we generalize this characterization for an arbitrary valued field (see Theorem \ref{3.9}). For a Henselian valued field, Aghigh et al. also proved several properties of distinguished pairs and distinguished chains in \cite{AK2}, \cite{AN1} and \cite{AN2}. We generalize these properties to arbitrary valued fields, assuming that the simple extensions generated by a distinguished pair are unibranched.
	
	The notion of abstract key polynomials was introduced in \cite{JD} and \cite{NS} as an alternative to key polynomials. The relation between key polynomials and abstract key polynomials is also given in \cite{JD} and \cite{Ma}. In 2023, Mavi and Bishnoi \cite{SA3}, gave a characterization between abstract key polynomials and distinguished pairs for Henselian valued fields. In Section 6, we generalize these characterization to an arbitrary valued field assuming that one of the abstract key polynomials in the pair is irreducible over $K^h$ (see Lemma \ref{abl1} and Theorem \ref{abth}). As a consequence, we also generalize some results relating the complete sequence of abstract key polynomials and saturated distinguished chains to arbitrary valued fields.
	
	Finally, using the results of this paper, we prove the following theorem, which gives a characterization relating the Mac Lane-Vaqui\'e chain, the complete sequence of abstract key polynomials, and the saturated distinguished chain for almost all valuations on $K[x].$
	
	 \begin{theorem}\label{th1}
		Let $w$ be any valuation on $K[x]$ and
		\begin{align*}
			(v\xrightarrow{\phi_0,\gamma_0})	w_0\xrightarrow{\phi_1,\gamma_1} w_1\xrightarrow{\phi_2,\gamma_2}\cdots \longrightarrow w_{n-1} \xrightarrow{\phi_{n},\gamma_{n}} w_{n}\longrightarrow\cdots\longrightarrow w
		\end{align*}
		be the Mac Lane-Vaqui\'e chain of $w.$ Then for every $n\in \mathbb{N},$ the following are equivalent:
		\begin{enumerate}[(i)]
			\item The augmentation step $w_i\longrightarrow w_{i+1}$ is ordinary, for every $i,\ 0\leq i\leq n-1$.
			\item The valuation $w_n$ has  a finite complete sequence $\{\phi_0, \phi_{1},\ldots,\phi_n\}$   of abstract key polynomials.
			\item  The polynomial $\phi_n$  has a saturated distinguished chain $(\phi_n, \phi_{n-1},\ldots, \phi_0).$ 
			\item $\theta_n$ has a saturated distinguished chain $\theta_n, \theta_{n-1},\ldots, \theta_0,$ where $\theta_i$ is an optimizing root of $\phi_i,$ for every $i,\ 0\leq i\leq n.$
			\item $K(\theta_n)|K$ is unibranched and defectless, for every $\theta_n\in Z(\phi_n).$
		\end{enumerate}
	\end{theorem}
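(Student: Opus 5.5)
The plan is to prove the chain (i)$\Leftrightarrow$(ii), (ii)$\Leftrightarrow$(iii)$\Leftrightarrow$(iv), and (i)$\Leftrightarrow$(v). Each link will rest on the correspondences developed in Sections 3--6, applied to the truncated Mac Lane--Vaqui\'e chain $v\to w_0\to\cdots\to w_n$, which is itself the Mac Lane--Vaqui\'e chain of $w_n$.

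For (i)$\Leftrightarrow$(ii), I will use the known dictionary between Mac Lane--Vaqui\'e chains and complete sequences of abstract key polynomials. When every step $w_i\to w_{i+1}$ with $i<n$ is ordinary, each $\phi_{i+1}$ is a key polynomial for $w_i$ of degree strictly larger than $\deg\phi_i$. Moreover $\phi_{i+1}$ realizes the jump of $\epsilon$, so $\{\phi_0,\dots,\phi_n\}$ is a complete sequence for $w_n$. Conversely, if some step is limit-augmented, then between $\phi_i$ and $\phi_{i+1}$ there is an infinite continuous family of abstract key polynomials of the same degree with no maximum. A finite complete sequence of $w_n$ cannot then be formed from the $\phi_i$: the truncation $w_n$ would be determined by a finite set, which contradicts the limit step. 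I expect this to follow directly from the cited results of \cite{JD}, \cite{NS} and \cite{Ma}.

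For (ii)$\Rightarrow$(iii)$\Rightarrow$(iv), I will apply the generalized Mavi--Bishnoi characterization, Lemma \ref{abl1} and Theorem \ref{abth}. Consecutive abstract key polynomials $\phi_i,\phi_{i+1}$ of a complete sequence form a distinguished pair, and saturation comes from completeness. The hypothesis needed there is that one polynomial of each pair is irreducible over $K^h$. This is the main obstacle. I would derive it from (i) via Section 4: in the ordinary case the chain is defectless, and by Theorem \ref{odt2} the Henselian defect of $K(\theta_i)$ is trivial, so the count in Theorem \ref{p23} gives exactly one common extension. Hence $\phi_i$ is irreducible over $K^h$, by Proposition \ref{bp2}. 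The passage from polynomials to roots, (iii)$\Leftrightarrow$(iv), will follow from the definition: distinguished chains of polynomials correspond to chains of optimizing roots $\theta_i$, with $\delta$-values given by the valuations $w_i$. For the reverse direction (iv)$\Rightarrow$(ii), Theorem \ref{abth} turns a saturated distinguished chain back into a complete sequence.

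For (v), I will use Theorem \ref{odt2} and Theorem \ref{2.12}. The defect of $w_n|v$ equals the Henselian defect of $K(\theta_n)|K$. The defect of a Mac Lane--Vaqui\'e chain is the product of $p$-powers contributed only by the limit steps, so (i) holds iff $w_n$ is defectless. Combined with unibranchedness, which comes from irreducibility of $\phi_n$ over $K^h$ via Proposition \ref{bp2}, this shows that (i) implies (v). Conversely, (v) gives unibranchedness, hence $\phi_n$ is irreducible over $K^h$. Then the Henselian defect equals the ordinary defect, which is trivial, so no step is limit, giving (i).
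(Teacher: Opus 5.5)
You correctly identify the crux: irreducibility of the $\phi_i$ over $K^h$, equivalently unibranchedness of $K(\theta_i)|K$. But your resolution of it does not work. A trivial Henselian defect says nothing about how $\phi_i$ factors over $K^h$. For $K=\mathbb{Q}$ with the $7$-adic valuation, $x^2-2$ splits over $K^h$ (since $\sqrt2\in\mathbb{Q}_7$), yet both extensions of $v$ to $\mathbb{Q}(\sqrt2)$ are defectless. Theorem \ref{p23} does not bridge this: $r=\deg f^h/j(f^h)$ counts the balls $B(\alpha,\delta)$ meeting $Z(f^h)$, not the irreducible factors of $f$ over $K^h$. Over a Henselian field $f=f^h$ always, while $r$ can exceed $1$. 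Proposition \ref{bp2} only transfers unibranchedness between key polynomials of one valuation; it never creates it.

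The missing idea, which is what the paper uses, is that $\phi_0$ is linear, so trivially $\phi_0=\phi_0^h$. Proposition \ref{bp3}(ii) then propagates this up the chain. Under (i), both $\phi_i$ and $\phi_{i+1}$ are key polynomials for $w_i$. Under (iii), Theorem \ref{3.9} makes $\phi_{i+1},\phi_i$ key polynomials of a common valuation. You need this before you may invoke Theorem \ref{abth} or Lemma \ref{abl1}.

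The same issue means (iii)$\Rightarrow$(iv) is not \emph{from the definition}. A distinguished chain of polynomials only gives, for each $i$, some distinguished pair of roots of $(\phi_{i+1},\phi_i)$. These pairs need not share roots, nor consist of optimizing roots. What makes them fit into one chain is Lemma \ref{abl1}(i). That lemma needs $\phi_{i+1}=\phi_{i+1}^h$, so that $K$-conjugates can be moved by $\bar v$-preserving automorphisms over $K^h$.

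Your claim that (i) holds if and only if $w_n$ is defectless is also false over non-Henselian fields. Lemma \ref{nl2}, which identifies the defect of an augmentation with its relative gap, needs $K$ Henselian. Take the same example with $\theta=\sqrt2$. Degrees increase strictly from $1$ to $\deg(v_\theta)=2$, so the MLV chain of $v_\theta$ is a single step $w_0\to v_\theta$ with $w_0$ of depth zero. This step must be a limit augmentation with relative gap $2$, since an ordinary step would make $x^2-2$ a key polynomial of a depth-zero valuation, contradicting Proposition \ref{bp3}(ii). Yet $d(v_\theta|v)=1$ by Theorem \ref{T2}.

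So in (v)$\Rightarrow$(i), defectlessness alone is not enough. You must use the unibranched hypothesis through Vaqui\'e's Theorem \ref{vth1} (defect equals the product of relative gaps), applied to $v_{\phi_n}$. Its MLV chain has the same step types as that of $w_n$ (see the proof of Theorem \ref{depth}). This is what the paper does after obtaining $d(w_n|v)=d(K(\theta_n)|K)=1$ from Theorem \ref{T2} and Corollary \ref{2.11}. The direction from (i) to defectlessness via Lemma \ref{nl1} and Theorem \ref{odt2} is fine, once unibranchedness has been obtained correctly.
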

	 Note that in the above theorem, (iv)$\iff$(v) generalizes Theorem 1.2 of \cite{AK2} to an arbitrary valued field. However, the same result is also proved in \cite[Theorem 1.2]{AR} with a different approach. Also (ii)$\iff$(v) generalizes Theorem 5.9 of \cite{SD}. The above theorem can also be interpreted in the language of Okutsu frame of $\phi_n$ (see \cite[Theorem 4.4 and Theorem 4.5]{M2}).
	 
	 In view of the above theorem, we have that if $K(\theta_n)|K$ is unibranched and defectless, then the length of an MLV chain of $v_{\phi_n}$, i.e., $\text{depth}(\theta_n)$ is equal to the number of terms in the distinguished chain of $\theta_n$ over $K.$ Hence, as an immediate consequence of Theorem \ref{dpth3}, we prove the following Conjecture of Nart and Novacoski \cite{NN}, in more general setting (see Corollary \ref{lc}).
	 \begin{conjecture}\label{1.2}
	 	For a henselian valued field $(K,v)$, let $L|K$ be a finite simple defectless extension, and $s, t$ be the minimal number of generators of $k_L|k_v$ and $\Gamma_L|\Gamma_v$ respectively. Then, \[\max\{s,t\}\leq \textup{depth}(L|K,v)\leq s+t.\]
	 \end{conjecture}
	 This conjecture also generalizes a classical result of Ore about the existence of $p$-regular generators, to defectless extensions of henselian valued fields.
	
	\section{Preliminaries}
		 In this section, we recall some notation, definitions, and results that will be used in the proofs of our main results.
		
 A surjective map $v:K \longrightarrow\ \Gamma_v\cup\{\infty\},$ where $\Gamma_v$ is a totally ordered additively written abelian group, is called a {\bf valuation}, if it satisfies the following axioms, for all $a,b$ in $K$:
	\begin{enumerate}[(i)]
		
\item $v(ab)=v(a)+v(b)$
	\item $v(a+b)\geq \min\{v(a),v(b)\}$
			\item $v(a)=\infty$ if and only if $a=0$.
	\end{enumerate}
		The pair $(K,v)$ is called a {\bf valued field} of arbitrary rank, and  $\Gamma_ v$  its {\bf value group}. The set $O_v=\{a\in K \mid v(a)\geq 0\}$ is a subring of $K,$ called the {\bf valuation ring} which has a unique {\bf maximal ideal} $M_v=\{a\in K\mid v(a)>0 \}.$ The quotient $O_v/M_v$ is called the {\bf residue field} of $v$ and is denoted by $k_v.$  For any $a\in O_v,$ the $v$-residue of $a$, denoted by $a^*,$ is the image of $a$ under the canonical homomorphism from $O_v$ onto $k_v.$ Let $\bar{v}$ be an extension of $v$ to a fixed algebraic closure $\overline{K}$ of $K$ with value group $\Gamma_{\bar{v}}.$ 
		
 Let $w:K[x]\longrightarrow\Gamma\cup\{\infty\}$ be an extension of $v$ satisfying axioms (i) and (ii) above, where $\Gamma$ is some ordered abelian group containing $\Gamma_{\bar{v}}.$ The {\bf support}  of $w$ is the prime ideal
 \[\text{supp}(w)=w^{-1}(\infty).\] If $\text{supp}(w)=\{0\},$ then it extends uniquely to a valuation on $K(x).$ If $\text{supp}(w)\neq\{0\},$ then we say that $w$ has a nontrivial support. In either case, we call $w$ a valuation on $K[x],$ and define the value group $\Gamma_w$ of $w$ as the group generated by $w(K[x]\backslash\text{supp}(w)),$ and the residue field $k_w$ of $w$ as the residue field of the canonical valuation induced by $w$ (see Remark \ref{r21}) on the quotient field of $K[x]/\text{supp}(w)$.

 An extension $w$ of $v$ to $K(x)$ satisfies the well-known Abhyankar inequality: $\text{rr}(\Gamma_w / \Gamma_v) + \text{tr.deg.}[k_{w}: k_v] \leq 1,$ where $\text{rr}(\Gamma_w / \Gamma_v )$ is the rational rank of $\Gamma_ w / \Gamma_ v,$ and $\text{tr.deg.}[k_{w}: k_v]$ is the transcendence degree of $k_{w}$ over $k_v.$ The extension $w$ is said to be {\bf value-transcendental} if $\text{rr}(\Gamma_w / \Gamma_v)=1$ and is said to be {\bf residue-transcendental} if $\text{tr.deg.}[k_{w}: k_v]=1$. We call $w$ {\bf valuation-transcendental} if it is either value-transcendental or residue-transcendental. Otherwise, $w$ is called {\bf valuation-algebraic}.

An extension $\overline{w}$ of $w$ to $\overline{K}[x]$ which is also an extension of $\bar{v}$ is called a {\bf common extension} of $w$ and $\bar{v}$. 

For any pair $(\alpha,\delta)\in\overline{K}\times\Gamma\cup\{\infty\},$ the map $\overline{w}_{\alpha,\delta}: \overline{K}[x]\longrightarrow \Gamma\cup\{\infty\},$ given by
	\begin{align*}
		\overline{w}_{\alpha,\delta}\left(\sum_{i\geq 0} c_i (x-\alpha)^i\right):=\min_{i\geq 0}\{\bar{v}(c_i)+i\delta\}, \, c_i\in\overline{K},
	\end{align*}
	is a valuation on $\overline{K}[x]$ and is said to be defined by $\min,\, \bar{v},\, \alpha$ and $\delta.$ If $\delta=\infty,$ then it has a nontrivial support generated by $(x-\alpha).$
		Let $\overline{w}$ be a common extension of $w$ and $\bar{v}$ to $\overline{K}[x]$ such that $\overline{w}=\overline{w}_{\alpha,\delta},$ then $(\alpha,\delta)$ is called a {\bf pair of definition} for $\overline{w}.$ We denote its restriction $w$ to $K[x]$ by $w_{\alpha,\delta}.$
	\begin{lemma}(\cite[Lemma 2.4]{W})\label{il1}
		Let $(\alpha,\delta)$ be a pair of definition for $\overline{w}$ and $(\alpha',\delta')\in \overline{K}\times\Gamma.$ Then $(\alpha',\delta')$ is also a pair of definition for $\overline{w}$ if and only if $\delta'=\delta$ and $\bar{v}(\alpha-\alpha')\geq \delta.$
	\end{lemma}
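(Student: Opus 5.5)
We prove the two implications separately, working directly from the definition of $\overline{w}_{\alpha,\delta}$ and comparing expansions of a polynomial $f\in\overline{K}[x]$ around $\alpha$ and around $\alpha'$. Throughout, $\delta\in\Gamma$, because $\overline{w}$ is valuation-transcendental and so has trivial support.

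\emph{Sufficiency.} Assume $\delta'=\delta$ and $\bar v(\alpha-\alpha')\geq\delta$. Put $\beta=\alpha'-\alpha$, so $\bar v(\beta)\geq\delta$. First we show $\overline{w}_{\alpha,\delta}(x-\alpha')=\delta$. Indeed,
\[
x-\alpha'=(x-\alpha)-\beta,
\]
so $\overline{w}_{\alpha,\delta}(x-\alpha')=\min\{\delta,\bar v(\beta)\}=\delta$. Now take $f=\sum_i c_i'(x-\alpha')^i$. Since $\overline{w}_{\alpha,\delta}$ is a valuation,
\[
\overline{w}_{\alpha,\delta}(f)\geq\min_i\{\bar v(c_i')+i\delta\}=\overline{w}_{\alpha',\delta}(f).
\]
For the reverse inequality we use the binomial expansion
\[
(x-\alpha)^j=\sum_{i\le j}\binom{j}{i}(-\beta)^{\,j-i}(x-\alpha')^i .
\]
Since binomial coefficients have nonnegative value, each term has $\overline{w}_{\alpha',\delta}$-value at least $(j-i)\delta+i\delta=j\delta$. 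Therefore $\overline{w}_{\alpha',\delta}(x-\alpha)=\delta$. The same argument with the roles of $\alpha$ and $\alpha'$ swapped gives
\[
\overline{w}_{\alpha',\delta}(f)\geq\overline{w}_{\alpha,\delta}(f).
\]
Hence the two valuations coincide, and $(\alpha',\delta')$ is a pair of definition for $\overline{w}$.

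\emph{Necessity.} Assume $\overline{w}_{\alpha,\delta}=\overline{w}_{\alpha',\delta'}$. Evaluating both on $x-\alpha'$ gives
\[
\delta'=\overline{w}_{\alpha,\delta}\big((x-\alpha)+(\alpha-\alpha')\big)=\min\{\delta,\bar v(\alpha-\alpha')\}.
\]
Symmetrically, evaluating on $x-\alpha$ gives $\delta=\min\{\delta',\bar v(\alpha-\alpha')\}$. So $\delta'\le\delta$ and $\delta\le\delta'$, hence $\delta=\delta'$. Then $\delta=\min\{\delta,\bar v(\alpha-\alpha')\}$, which forces $\bar v(\alpha-\alpha')\geq\delta$.

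The only step needing care is the reverse inequality in the sufficiency direction. The key point there is that $\bar v(\beta)\geq\delta$ makes the change of center a "bounded" substitution that does not lower values.
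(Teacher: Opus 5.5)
Your proof is correct. The paper gives no argument of its own for this lemma; it is quoted from \cite[Lemma 2.4]{W}. Your direct verification from the definition of $\overline{w}_{\alpha,\delta}$ is the standard one: evaluate on $x-\alpha$ and $x-\alpha'$ for necessity, and use the ultrametric inequality in both directions for sufficiency.

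Three cosmetic points.
\begin{enumerate}[(i)]
\item Since $x-\alpha=(x-\alpha')+\beta$, the binomial expansion should read $\sum_{i\le j}\binom{j}{i}\beta^{\,j-i}(x-\alpha')^i$. The sign slip is harmless, because $\bar v(-\beta)=\bar v(\beta)$.
\item The expansion is not needed. The identity $\overline{w}_{\alpha',\delta}(x-\alpha)=\min\{\bar v(\beta),\delta\}=\delta$ follows from the definition exactly as $\overline{w}_{\alpha,\delta}(x-\alpha')=\delta$ did. So the sufficiency argument is completely symmetric, and the ``reverse inequality'' needs no more care than the first one.
\item You do not need to assume $\overline{w}$ is valuation-transcendental in order to get $\delta\in\Gamma$. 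In the ``if'' direction, $\delta=\delta'\in\Gamma$ by hypothesis. In the ``only if'' direction, your own evaluation at $x-\alpha$ gives $\delta=\min\{\delta',\bar v(\alpha-\alpha')\}\le\delta'<\infty$, so the case $\delta=\infty$ is excluded automatically.
\end{enumerate}
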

	
	\begin{definition}
	A pair $(\alpha,\delta)$ in $\overline{K}\times \Gamma$ is called  a $(K,v)$-{\bf minimal pair} if for every $\beta$ in $\overline{K}$ satisfying $\bar{v}(\alpha-\beta)\geq\delta,$ we have  $\deg\beta\geq\deg\alpha,$ where by $\deg\alpha$ we mean the degree of the extension $K(\alpha)|K.$  
\end{definition}

 If $\overline{w}=\overline{w}_{\alpha,\delta},$ such that $(\alpha,\delta)$ is a $(K,v)$-minimal pair, then we say that $(\alpha,\delta)$ is a {\bf minimal pair of definition} for $\overline{w}.$ In view of Lemma \ref{il1}, any pair of definition for $\overline{w}$ can be replaced by a minimal pair of definition.

 For any pair $(\alpha,\delta)\in\overline{K}\times\Gamma,$ we define the open and closed balls with center $\alpha$ and radius $\delta$ as\[B^\circ(\alpha,\delta)=\{\beta\in\overline{K}\mid\bar{v}(\beta-\alpha)> \delta\}\subseteq B(\alpha,\delta)=\{\beta\in\overline{K}\mid\bar{v}(\beta-\alpha)\geq \delta\}.\]
For any $(\alpha,\delta), (\alpha',\delta')\in \overline{K}\times\Gamma,$ we have \[\overline{w}_{\alpha,\delta}(f)\leq \overline{w}_{\alpha',\delta'}(f),\ \text{for all} \ f\in\overline{K}[x]\iff \delta\leq \delta' \ \text{and} \ B(\alpha',\delta')\subseteq B(\alpha,\delta).\]

If  $\delta\in\Gamma,$ then $w_{\alpha,\delta}$ is a valuation-transcendental extension, and any valuation-transcendental extension can be obtained in this way \cite[Corollary 3.7]{W}. The valuation $w$ is residue-transcendental if $\delta\in\Gamma_{\overline{v}}$ and value-transcendental if $\delta\notin\Gamma_{\overline{v}}$. On the other hand, if $\delta=\infty,$ then $w_{\alpha,\delta}$ has a nontrivial support generated by the minimal polynomial of $\alpha$ over $K.$ If $\alpha\in K,$ then the valuation $w_{\alpha,\delta}$ is called a {\bf depth zero valuation}.

\subsection{Key Polynomials}

We now recall the definition of key polynomials, first introduced by Mac Lane \cite{M} in 1936 for discrete rank-one valuations and later generalized by Vaqui\'e \cite{V} in 2007 for arbitrary valuations. Further, Nart \cite{EN1} classified all possible extensions of $v$ to $K[x]$ with the help of key polynomials.

For a valuation-transcendental extension  $w$ of $v$ to $K(x)$ and polynomials $f,$ $g$ in $K[x],$ we say that $f$ and $g$ are {\bf $w$-equivalent} (denoted $f\sim_w g$)  if $w(f-g)>w(f)=w(g)$ and $g$ is {\bf $w$-divisible} by $f$ (denoted $f\mid_{w} g$)  if there exists some polynomial $h \in K[x]$ such that $g$ is $w$-equivalent to $fh.$

		\begin{definition}
		A monic polynomial $f$   is called a  {\bf  key polynomial} for $w$ if it is
		\begin{enumerate}[(i)]
			\item  $w$-{\bf irreducible}, i.e., for any $g,\, h\in K[x],$ whenever $f\mid_{w} gh,$ then either $f\mid_{w}g $ or $f\mid_{w}h,$ and 
			\item $w$-{\bf minimal}, i.e., for every nonzero polynomial $g\in K[x],$ whenever $f\mid_{w}g,$ then $\deg g\geq \deg f.$
		\end{enumerate}
	\end{definition}
	
	The set of all key polynomials for $w$ is denoted by $\operatorname{KP}(w).$ If $w=w_{\alpha,\delta}$ for some $(K,v)$-minimal pair $(\alpha,\delta)\in \overline{K}\times\Gamma$ and $\phi$  is the minimal polynomial of $\alpha$ over $K,$ then $\phi$ is a minimal degree key polynomial for $w$ (\cite[Theorem 1.1]{JN} and \cite[Theorem 2.21]{Ma}). 
The existence of key polynomials is characterized as follows.
\begin{theorem}(\cite[Theorem 4.4 and 4.2]{EN4})
	A valuation $w$ on $K(x)$ has $\operatorname{KP}(w)\neq\emptyset$ if and only if it is valuation-transcendental. Moreover, if $w$ is value-transcendental, then all key polynomials have the same degree.
\end{theorem}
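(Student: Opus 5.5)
The plan is to work throughout with a pair of definition $(\alpha,\delta)$ for a common extension $\overline{w}$, using Lemma \ref{il1} to take it minimal. I would handle the forward direction, the converse, and the degree statement in that order.

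\emph{Forward direction.} Suppose $w$ is valuation-transcendental. By \cite[Corollary 3.7]{W}, $w=w_{\alpha,\delta}$ for some $(\alpha,\delta)\in\overline{K}\times\Gamma$ with $\delta\neq\infty$. By Lemma \ref{il1} we may replace $(\alpha,\delta)$ by a $(K,v)$-minimal pair of definition. The minimal polynomial $\phi$ of $\alpha$ over $K$ is then a (minimal degree) key polynomial for $w$, by \cite[Theorem 1.1]{JN} and \cite[Theorem 2.21]{Ma}. Hence $\operatorname{KP}(w)\neq\emptyset$.

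\emph{Converse.} Let $\phi\in\operatorname{KP}(w)$ with $\deg\phi=m$.
\begin{enumerate}[(1)]
\item First, $w$-minimality gives the standard expansion formula: for the $\phi$-expansion $f=\sum_i a_i\phi^i$ with $\deg a_i<m$,
\[w(f)=\min_i\{w(a_i\phi^i)\}.\]
\item Next, on polynomials of degree $<m$, $w$ coincides with the valuation on $K[x]/(\phi)$ obtained by truncation. That field is a finite extension $K(\theta)$ of $K$. So the values $w(a)$ with $\deg a<m$ are torsion over $\Gamma_v$, and quotients of equal value among such $a$ have residues algebraic over $k_v$.
\item Now split into two cases.
\begin{enumerate}[(a)]
\item If $w(\phi)$ is not torsion modulo the group $\Gamma_{<m}$ generated by these values, then $\mathrm{rr}(\Gamma_w/\Gamma_v)=1$, so $w$ is value-transcendental.
\item Otherwise take the least $e\geq 1$ and some $a$ with $\deg a<m$ and $e\,w(\phi)=w(a)$. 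Then $\xi=(\phi^e/a)^*$ is transcendental over $k_v$. Indeed, suppose $\xi$ satisfied a nontrivial algebraic relation over the (algebraic) residues of degree-$<m$ polynomials. Lifting it gives a polynomial $\sum_j b_j\phi^{ej}a^{N-j}$ whose value strictly exceeds $\min_j w(b_j\phi^{ej}a^{N-j})$. After reducing the coefficients $b_j a^{N-j}$ modulo $\phi$ via the truncation valuation, this contradicts the expansion formula of step (1).
\end{enumerate}
\end{enumerate}
Thus $w$ is valuation-transcendental. I expect this step, keeping the $\phi$-adic coefficients of degree $<m$ under control, to be the main obstacle.

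\emph{Equal degrees.} Assume $w$ is value-transcendental, and let $\phi$ be a key polynomial of minimal degree $m$. Then $w(\phi)$ is non-torsion modulo $\Gamma_{<m}$. Hence the values $w(a_i\phi^i)$ in any $\phi$-expansion are pairwise distinct, because two of them being equal would force $(i-j)w(\phi)\in\Gamma_{<m}$. So every $f$ is $w$-equivalent to a single monomial $a_i\phi^i$. Moreover each $a_i$ is a $w$-unit in the graded algebra: it has degree $<m$, and its inverse modulo $\phi$ lifts.

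Now take any $\psi\in\operatorname{KP}(w)$, so $\psi\sim_w a_i\phi^i$. By $w$-minimality, $\psi$ cannot $w$-divide a $w$-unit, which rules out $i=0$. Since $\psi\mid_w \phi^i$, $w$-irreducibility gives $\psi\mid_w\phi$, so minimality yields $\deg\psi\le\deg\phi=m$. The minimality of $m$ gives the reverse inequality, so all key polynomials have degree $m$.
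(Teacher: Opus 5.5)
The gap is in step 3(b). Your forward direction, your case (a) and your equal-degree argument are sound. In the value-transcendental case, "the inverse modulo $\phi$ lifts" is legitimate: if $\deg a,\deg b<m$ and $ab=q\phi+r$ with $\deg r<m$, then the truncation valuation gives $w(ab)=w(r)\in\Gamma_{<m}$. On the other hand $w(q\phi)=w(q)+w(\phi)\notin\Gamma_{<m}$, so the expansion formula forces $w(q\phi)>w(ab)$, i.e.\ $ab\sim_w r$.

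In case (b) this incommensurability is gone, and the problem already starts in the second half of the last sentence of step (2). That $[w;\phi,\infty]$ is a valuation on $K[x]/(\phi)\cong K(\theta)$ controls only the \emph{values} of remainders ($w(r)=w(ab)$), not initial forms, because $w(q\phi)=w(ab)$ is allowed. So the $w$-residue of a quotient of degree-$<m$ polynomials need not equal the residue of the corresponding quotient in $K(\theta)$. Your claim that these residues are algebraic over $k_v$ is therefore unjustified, and for an arbitrary key polynomial it is false.

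Take the Gauss valuation over a field with $k_v=\mathbb{F}_3$, and $\phi=x^2+1\in\operatorname{KP}(w)$, so $m=2$. Then $x\cdot x=\phi-1$ with $w(\phi)=w(x^2)=0$. The polynomial $x$ has degree $<2$, yet its $w$-residue $t$ is transcendental over $k_v$. With $a=1$ you get $\xi=\phi^*=t^2+1\in k_v(t)$, so $\xi$ does satisfy a nontrivial relation over the residues of degree-$<2$ polynomials. Its lift $\phi-x\cdot x-1$ is identically zero, and reducing the coefficient $-(x\cdot x+1)$ modulo $\phi$ turns it into $\phi$, of value $0$. So reduction modulo $\phi$ changes the value instead of contradicting the expansion formula. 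Even for relations over $k_v$, your reduction of $a^{N-j}$ fails here if you choose $a=x$, since $x^2\not\sim_w x^2 \bmod \phi$.

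What is missing is a lemma that holds only for a key polynomial of \emph{minimal} degree, which you never impose in the converse: if $\deg\phi=\deg(w)$ and $\deg a,\deg b<\deg\phi$, then $ab\sim_w (ab\bmod\phi)$. In the commensurable case this is not a formal consequence of the truncation being a valuation. It is the substantive input of the cited source's analysis of $\mathcal{G}_w$: initial forms of polynomials of degree $<\deg(w)$ are homogeneous units, and homogeneous units are algebraic over $\mathcal{G}_v$ (cf.\ the facts on $\Gamma_w^\circ$ recalled in Section 2).

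With this lemma your argument closes, using only relations $\sum_j \bar c_j\xi^j=0$ with $c_j\in O_v$ and $c_N=1$:
\begin{enumerate}[(1)]
\item By induction, $a^n=r_n+E_n$ with $\deg r_n<m$ and $w(E_n)>n\,w(a)$.
\item Hence $F=\sum_j c_j\phi^{ej}a^{N-j}$ and $G=\sum_j c_jr_{N-j}\phi^{ej}$ satisfy $w(F-G)>N\,w(a)$.
\item Since $w(F)>N\,w(a)$, also $w(G)>N\,w(a)$.
\item But $G$ is a genuine $\phi$-expansion containing the term $\phi^{eN}$, so $w(G)\le eN\,w(\phi)=N\,w(a)$, a contradiction.
\end{enumerate}
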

If $\operatorname{KP}(w)\neq\emptyset,$ then we define $\deg (w)=\min_{f\in\operatorname{KP}(w)}\deg f.$ If $\text{supp}(w)=\phi K[x]$ for some $\phi\in K[x],$ then we define $\deg (w)=\deg\phi.$

		Let $\phi\in K[x]$ be  $w$-minimal, then by Proposition 2.3 of \cite{EN4}, for  any polynomial $f\in K[x]$ with $\phi$-expansion $\sum_{i=0}^{n}   a_i \phi^i,$ $a_i\in K[x],$  $\deg a_i<\deg \phi,$ we have 
	\begin{align*}
		w(f)=\min_{0 \leq i \leq n}\{w(a_i)+ iw(\phi)\}.
	\end{align*}
	Consider the set $S_{w,\phi}(f):=\{0\leq i\leq n\mid w(f)=w(f_i\phi^i)\}.$
	If $\phi$ has minimal degree in $\operatorname{KP}(w),$ then for any nonzero $f\in K[x],$ we denote   
	\[\deg_w(f):=\max(S_{w,\phi}(f)).\] Clearly, it is independent of the choice of $\phi$ among all minimal degree key polynomials for $w.$
	\begin{lemma}(\cite[Theorem 3.7]{EN4})\label{kpl1}
		For a valuation-transcendental extension $w$, any nonzero $f\in K[x]$ is $w$-minimal if and only if $\deg f=\deg_w(f)\deg(w).$	
	\end{lemma}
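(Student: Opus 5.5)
The plan is to work with the $\phi$-expansion, where $\phi$ is a fixed key polynomial of minimal degree $m=\deg(w)$. I would use the graded algebra $\mathcal{G}_w$ of $w$ and write $\operatorname{in}_w(f)$ for the image of $f$ in it. Recall that $f\mid_w g$ is the same as $\operatorname{in}_w(f)\mid\operatorname{in}_w(g)$ in $\mathcal{G}_w$. Write $f=\sum_{i=0}^n a_i\phi^i$ with $\deg a_i<m$, $a_n\neq 0$, and put $s=\deg_w(f)=\max S_{w,\phi}(f)$. Then
\[\deg f=nm+\deg a_n\geq sm,\]
with equality if and only if $n=s$ and $a_n$ is a nonzero constant.

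I would use two standard facts about a minimal degree key polynomial.

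(a) Every nonzero $a$ with $\deg a<m$ is a unit in $\mathcal{G}_w$. Moreover, the initial forms of such polynomials, together with $0$, form a subring closed under inverses. Concretely, for $a,b$ of degree $<m$ there is $c$ with $\deg c<m$ and $\operatorname{in}_w(c)=\operatorname{in}_w(a)^{-1}\operatorname{in}_w(b)$.

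(b) $\deg_w$ is additive, $\deg_w(gh)=\deg_w(g)+\deg_w(h)$, and $w$-equivalent polynomials have the same $\deg_w$. The first part holds because $\operatorname{in}_w(\phi)$ is a prime element and $\deg_w(g)$ is its exponent in $\operatorname{in}_w(g)$ once the unit factors from (a) are stripped off. The second part holds because $\deg_w(g)$ depends only on $\operatorname{in}_w(g)$.

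Proving (a) and (b) is the main technical obstacle. I would get them from the minimality of $\deg\phi$ together with the expansion formula $w(f)=\min_i\{w(a_i)+iw(\phi)\}$ (Proposition 2.3 of \cite{EN4}), or cite them from \cite{EN4}.

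($\Leftarrow$) Assume $\deg f=sm$ and suppose $f\mid_w g$, say $g\sim_w fh$. By (b),
\[\deg_w(g)=\deg_w(f)+\deg_w(h)\geq s.\]
Since $\deg g\geq \deg_w(g)\,m$ for every nonzero $g$ (the top index of the $\phi$-expansion is at least the maximum of $S_{w,\phi}(g)$), we get $\deg g\geq sm=\deg f$. Hence $f$ is $w$-minimal.

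($\Rightarrow$) Suppose $\deg f>sm$. I will produce $g$ with $f\mid_w g$ and $\deg g<\deg f$. Truncate: $g_0=\sum_{i\leq s}a_i\phi^i$. All discarded terms have value $>w(f)$, and every term of index $\le s$ has value $\geq w(f)$, so $f\sim_w g_0$. Now normalise the top coefficient: by (a), choose $c_i$ with $\deg c_i<m$ and $\operatorname{in}_w(c_i)=\operatorname{in}_w(a_s)^{-1}\operatorname{in}_w(a_i)$ for $i<s$ (put $c_i=0$ when $a_i=0$), and set
\[g=\phi^s+\sum_{i<s}c_i\phi^i.\]
Term by term, $\operatorname{in}_w(a_s)\operatorname{in}_w(g)$ equals the sum of the initial forms of the terms $a_i\phi^i$ with $i\in S_{w,\phi}(f)$. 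This sum is $\operatorname{in}_w(g_0)=\operatorname{in}_w(f)$. Therefore $\operatorname{in}_w(g)=\operatorname{in}_w(a_s)^{-1}\operatorname{in}_w(f)$, so $f\mid_w g$, while $\deg g=sm<\deg f$. So $f$ is not $w$-minimal.

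One bookkeeping step must be checked: for $i\in S_{w,\phi}(f)$ the values $w(c_i\phi^i)$ all coincide with $w(\phi^s)$, so that the initial forms add correctly in a single homogeneous component. For the terms with $i\notin S_{w,\phi}(f)$, the corresponding $c_i\phi^i$ has value strictly larger than $w(\phi^s)$, since $w(c_i)=w(a_i)-w(a_s)$. This also follows from (a).
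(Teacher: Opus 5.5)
The paper gives no proof of this lemma; it imports it from \cite[Theorem 3.7]{EN4}, so there is no in-paper argument to compare with. Your reduction to (a) and (b) is correct, and both directions go through once those two facts are granted. For ($\Leftarrow$), $\deg g\ge \deg_w(g)\,m$ together with (b) suffices. For ($\Rightarrow$), your $g=\phi^s+\sum_{i<s}c_i\phi^i$ satisfies $\operatorname{in}_w(a_s)\operatorname{in}_w(g)=\sum_{i\in S_{w,\phi}(f)}\operatorname{in}_w(a_i\phi^i)=\operatorname{in}_w(f)$. Hence $g\sim_w bf$ with $\operatorname{in}_w(b)=\operatorname{in}_w(a_s)^{-1}$, and $\deg g=sm<\deg f$. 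All the substance therefore sits in (a) and (b), and your sketches for them are not right.

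\textbf{The reason you give for additivity in (b) is false.} The exponent of the prime $\operatorname{in}_w(\phi)$ in $\operatorname{in}_w(g)$ is $\min S_{w,\phi}(g)$, not $\max S_{w,\phi}(g)$. In general $\operatorname{in}_w(g)$ is not a unit times a power of $\operatorname{in}_w(\phi)$.
\begin{itemize}
\item Counterexample: take the Gauss valuation $w(\sum c_ix^i)=\min v(c_i)$, $\phi=x$ and $g=x+1$. Then $\deg_w(g)=1$, but $x\nmid_w x+1$, because the constant term of $x+1-xh$ always has value $0$.
\item Your description is correct only in the value-transcendental case, where $S_{w,\phi}(g)$ is a singleton.
\item The correct mechanism: $\mathcal{G}_w$ is a polynomial ring in $\operatorname{in}_w(\phi)$ over $\mathcal{G}_w^\circ$. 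The nonzero homogeneous elements of $\mathcal{G}_w^\circ$ are exactly the $\operatorname{in}_w(a)$ with $\deg a<m$. These do not form a subring, as you claim; they are the homogeneous elements of one.
\item The representation $\operatorname{in}_w(g)=\sum_{i\in S}\operatorname{in}_w(g_i)\operatorname{in}_w(\phi)^i$ is unique by the expansion formula, and $\deg_w(g)$ is its degree in $\operatorname{in}_w(\phi)$.
\item Additivity then holds because the product of the leading coefficients is nonzero. Before that, use the closure part of (a) to rewrite each $\operatorname{in}_w(g_i)\operatorname{in}_w(h_j)$ as some $\operatorname{in}_w(r)$ with $\deg r<m$.
\item Within this paper there is a shortcut: additivity is immediate from Proposition \ref{dp1}, since $j(gh)=j(g)+j(h)$.
\end{itemize}

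\textbf{Fact (a) does not follow from the expansion formula.} That formula holds for every $w$-minimal $\phi$, yet (a) fails for key polynomials that are not of minimal degree. So minimality of $\deg\phi$ must enter substantively. You can either cite the structure of $\mathcal{G}_w$ from \cite{EN4}, or argue as follows.
\begin{itemize}
\item Take an optimizing root $\alpha$ of $\phi$. By Theorem \ref{t2}, $\overline{w}=\overline{w}_{\alpha,\delta}$.
\item No root of a nonzero polynomial of degree $<m$ lies in $B(\alpha,\delta)$. Otherwise a minimal pair of definition of degree $<m$ would produce a key polynomial of degree $<m$.
\item Lemma \ref{dpl12} then gives $a\sim_{\overline{w}}a(\alpha)$ whenever $\deg a<m$.
\item Choose $c$ with $\deg c<m$ and $c(\alpha)=b(\alpha)/a(\alpha)$. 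Then $ac\sim_w b$, which is exactly the closure statement in (a).
\end{itemize}
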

	
	For  valuations $w$ and $w'$ on $K[x],$ taking values in the  common ordered abelian group $\Gamma,$ we say that $w\leq w'$ if and only if 
	$$w(f)\leq w'(f),~\forall f\in K[x].$$
	\begin{definition}
		Let $w<w',$ then the set of all monic polynomials $g\in K[x]$ of minimal degree (say) $d'$ such that $w(g)<w'(g),$ denoted  by $\Phi(w,w')$ is called the {\bf tangent direction} of $w,$ and  $\deg(\Phi(w,w')) :=d'.$ 
\end{definition}
 Note that the set $\Phi(w,w')\subset \operatorname{KP}(w)$ (see \cite[Theorem 1.15]{V}).
\subsection{Graded Algebra}
	
	We now define the graded algebra associated with the valuation $w$ on $K[x]$ (\cite{EN4} and \cite{EN3}). For all $\alpha\in \Gamma_w,$ consider the abelian groups
\[\mathcal{P}_\alpha:=\{f\in K[x]\mid w(f)\geq \alpha\}\  \text{and} \ \mathcal{P}_\alpha^+:=\{f\in K[x]\mid w(f)> \alpha\}.\]
The \textbf{graded algebra} of $w$ is the integral domain given by:
\[\mathcal{G}_w:= \bigoplus_{\alpha \in \Gamma_w} \mathcal{P}_\alpha  / \mathcal{P}_\alpha^+ .\]
For $f\in K[x],$ we denote its image in $\mathcal{P}_\alpha  / \mathcal{P}_\alpha^+ \subset \mathcal{G}_w $ by $\text{in}_w(f),$ and $\alpha=w(f)\in\Gamma_w$ is called the natural \textbf{grade}, which indicates that $\text{in}_w(f)$ belongs to the homogeneous component $\mathcal{P}_\alpha  / \mathcal{P}_\alpha^+ $.
Let $\mathcal{G}_w^\circ \subset \mathcal{G}_w$ denote the subalgebra generated by the set of all homogeneous units, and $\Gamma_{w} ^{\circ}\subset  \Gamma_{w} $ the subgroup of grades of all homogeneous units in $\mathcal{G}_w.$ Since the homogeneous units are algebraic over the graded algebra of $v$ (\cite[Proposition 3.5]{EN4}), the group $\Gamma_{w} ^{\circ}/\Gamma_v$ is torsion. The {\bf relative ramification index} of $w$ is defined as 
\[e(w):=( \Gamma_{w}:\Gamma_{w} ^{\circ}).\]
 In particular, if $w$ is valuation-transcendental and $\phi\in\operatorname{KP}(w)$ of minimal degree, then
\[\Gamma_{w} ^{\circ}=\{w(a)\mid a(\neq 0)\in K[x],~\deg a<\deg\phi\},\]
and $\Gamma_w=\langle\Gamma_{w} ^{\circ}, w(\phi)\rangle$ \cite[Lemma 2.11]{EN4}.


%

\subsection{Mac Lane-Vaqui\'e chains}

We now define augmentations and Mac Lane-Vaqui\'e (MLV) chains of valuation $w$ on $K[x].$
\begin{definition}
	Let $\phi$ be a key polynomial for a valuation $w$ and $\gamma\in\Gamma\cup\{\infty\}$ such that $\gamma> w(\phi)$. The map $w': K[x]\longrightarrow \Gamma\cup\{\infty\}$ defined by 
	$$w'(f):=\min_{i\geq 0}\{w(f_i)+i\gamma\},$$ where
	$\sum_{i\geq 0}f_i \phi^i, $ $\deg f_i<\deg \phi,$ is the $\phi$-expansion of $f\in K[x],$  gives a valuation on $K[x]$  called the {\bf ordinary augmentation of} $w,$ defined by $\phi$ and $\gamma,$ and is  denoted  by $[w; \phi,\gamma].$
\end{definition}
Note that $w'(\phi)=\gamma,$ i.e., $w(\phi)<w'(\phi).$ If $\gamma<\infty,$ then $\phi\in\operatorname{KP}(w')$ of minimal degree (\cite[Proposition 2.1]{EN1}), otherwise, $\text{supp}(w')=\phi K[x].$ In either case, $\deg(\Phi(w,w'))=\deg (w')=\deg\phi.$ 

Consider a family $\mathcal{W}=(\rho_i)_{i\in\mathbf{A}}$ of valuations on $K(x),$  taking values in a common ordered abelian group $\Gamma,$   and indexed by a totally ordered set $\mathbf{A}.$ 
A polynomial $f$ in $K[X]$ is said to be {\bf $\mathcal{W}$-stable}  if there exists some index $i_0\in\mathbf{A}$ such that
$$\rho_i(f)=\rho_{i_0}(f),~ \forall~ i\geq i_0.$$  This stable value is denoted by $\rho_\mathcal{W}(f).$
%
A polynomial $f\in K[X]$  is {\bf $\mathcal{W}$-unstable} if and only if  
$$\rho_i(f)<\rho_j(f),\hspace{5pt} \forall~ i<j\in\mathbf{A}.$$  
We denote 
$$m_{\infty}:=\min\{\deg f\mid f\in K[X],~\text{$f$ is $\mathcal{W}$-unstable}\}.$$ If  all polynomials are $\mathcal{W}$-stable, then we set $m_{\infty}=\infty$. 

\begin{definition}\label{1.1.14}
	Let $w$ be a valuation on $K(x)$ admitting key polynomials. Then a {\bf continuous family of augmentations} of $w$ is a family of ordinary augmentations of $w$  $$\mathcal{W} =(\rho_i=[w;\chi_i,\gamma_i])_{i\in	\mathbf{A}},$$ indexed by a totally ordered set $\mathbf{A}$ such that $\gamma_i<\gamma_j$ for all $i<j$ in $\mathbf{A},$ satisfying the following conditions:
	\begin{enumerate}[(i)]
		\item The set $\mathbf{A}$  has no last element.
		\item For all $i$ in $\mathbf{A},$ $\chi_i\in \operatorname{KP}(w)$ have the same degree.
		\item For all $i<j$ in $\mathbf{A},$ $\chi_j$ is a  key polynomial for $\rho_i,$ 
		$\chi_j\not\sim_{\rho_i}\chi_i~\text{and}~ \rho_j=[\rho_i;\chi_j,\gamma_j].$
	\end{enumerate}
\end{definition}
The common degree $\deg\chi_i$ for all $i,$ is called the {\bf stable degree} of the family $\mathcal{W}$ and is denoted by $\deg (\mathcal{W}).$

Any continuous family $\mathcal{W}$ of augmentations of $w$ falls in one of the following three cases:
\begin{enumerate}[(i)]
	\item  It has a stable limit,  i.e., $\rho_{\mathcal{W}}$ is a valuation on $K[x],$ if $m_{\infty}=\infty.$ 
	\item It is {\bf in-essential} if $m_{\infty}=\deg(\mathcal{W}).$
	\item It is  {\bf essential }  if $\deg(\mathcal{W})<m_{\infty}<\infty.$
\end{enumerate} 
\begin{definition}
	Let $\mathcal{W}$ be an essential continuous family of augmentations of a valuation $w.$ Then a  monic $\mathcal{W}$-unstable polynomial of minimal degree  is called a {\bf Mac Lane-Vaqui\'e  limit key polynomial} for $\mathcal{W}.$
\end{definition}
%
%
%
%
%
%
\begin{definition}
	Let   $\phi$ be any MLV limit key polynomial for an essential continuous family $\mathcal{W}  =(\rho_i)_{i\in\mathbf{A}}$ of augmentations of $w$   and  $\gamma\in \Gamma\cup\{\infty\}$ such that $\gamma>\rho_i(\phi)$ for all $i\in\mathbf{A}.$ Then the map $w': K[x]\longrightarrow\Gamma\cup\{\infty\}$ defined by
	$$w'(f):=\min_{i\geq 0}\{\rho_\mathcal{W}(f_i)+i\gamma\},$$
	where  $\sum_{i\geq 0} f_i\phi^i,$ $\deg f_i<\deg \phi,$ is the $\phi$-expansion of $f\in K[x],$ gives a valuation on $K[x]$ and is called  the {\bf limit augmentation} of $\mathcal{W},$  denoted by $[\mathcal{W}=(\rho_i)_{i\in\mathbf{A}}; \phi, \gamma].$ 
\end{definition}
Note that $w'(\phi)=\gamma$ and $\rho_i<w'$ for all $i\in\mathbf{A}.$ Also if $\gamma<\infty$, then $\phi$ is a key polynomial for $w'$ of minimal degree  \cite[Proposition 3.5]{EN1}, otherwise, $\text{supp}(w')=\phi K[x].$ In either case, $\deg (w')=\deg\phi.$

\vspace{.20pt}

In 2021, Nart \cite{EN1} introduced the Mac Lane-Vaqui\'e chain for an arbitrary valuation $w$ on $K[x],$ constructed as a mixture of ordinary and limit augmentations satisfying some technical conditions.

\begin{definition}
	A finite, or countably infinite, chain of mixed augmentations 
	\begin{align*}
			(v\xrightarrow{\phi_0,\gamma_0})w_0\xrightarrow{\phi_1,\gamma_1} w_1\xrightarrow{\phi_2,\gamma_2}\cdots \longrightarrow w_{i}\xrightarrow{\phi_{i+1},\gamma_{i+1}} w_{i+1}\longrightarrow\cdots
	\end{align*} is called a {\bf Mac Lane-Vaqui\'e  chain}, if  $w_0$ is a depth zero valuation defined by $(\alpha_0,$ $\gamma_0),$ where $\phi_0=x-\alpha_0,$ and every augmentation step satisfies:
	\begin{enumerate}[(i)]
		\item if $w_{i}\rightarrow w_{i+1}$ is ordinary, then $\deg (w_{i})<\deg(\Phi(w_{i},w_{i+1}))(=\deg(w_{i+1})).$
		\item if $w_{i}\rightarrow w_{i+1}$ is limit, then $\deg (w_{i})=\deg(\Phi(w_{i},w_{i+1}))(<\deg(w_{i+1}))$ and $\phi_{i}\notin\Phi(w_{i},w_{i+1}).$
	\end{enumerate}
	
\end{definition}
The following result categorizes MLV chains of extensions of $v$ to $K[x].$
\begin{theorem}(\cite[Theorem 4.3]{EN1})\label{mlvt1}
	Every valuation $w$ on $K[x]$ falls in one of the following cases. 
	\begin{enumerate}[(i)]
		\item It is the last valuation of a finite MLV chain
		\begin{align*}
			w_0\xrightarrow{\phi_1,\gamma_1} w_1\xrightarrow{\phi_2,\gamma_2}\cdots \longrightarrow w_{n-1}\xrightarrow{\phi_n,\gamma_n} w_n=w.
		\end{align*}
		\item There exists a valuation $w_n$ falling in case (i) such that $w$  is the stable limit of a continuous family ${\mathcal{W}_n}$ of augmentations of $w_n$
		\begin{align*}
			w_0\xrightarrow{\phi_1,\gamma_1} w_1\xrightarrow{\phi_2,\gamma_2}\cdots \longrightarrow w_{n-1}\xrightarrow{\phi_n,\gamma_n} w_n\xrightarrow {{\mathcal{W}_n}} \rho_{\mathcal{W}_n}=w,
		\end{align*}
		such that $\deg(\Phi(w_n,w))=\deg(w_n)$ and $\phi_n\notin \Phi(w_n, w).$
		\item It is the stable limit of an infinite MLV chain 
		\begin{align*}
			w_0\xrightarrow{\phi_1,\gamma_1} w_1\xrightarrow{\phi_2.\gamma_2}\cdots \longrightarrow w_{i}\xrightarrow{\phi_{i+1},\gamma_{i+1}} w_{i+1}\longrightarrow\cdots .
		\end{align*}
	\end{enumerate}
	We say that the MLV chain of  $w$ has finite length $r,$ quasi-finite length $r,$ or infinite length, respectively.
\end{theorem}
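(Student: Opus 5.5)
This trichotomy is the Mac Lane--Vaqui\'e existence theorem of Nart, so the plan is to rebuild the chain of $w$ by transfinite recursion from below.

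\emph{Step 1: the starting valuation.} Take a depth zero valuation $w_0=w_{\alpha_0,\gamma_0}$ with $\alpha_0\in K$ and $w_0\le w$. If $w$ is valuation-transcendental, choose a minimal pair of definition $(\alpha,\delta)$ for a common extension $\overline{w}$. Compare $w$ with the depth zero valuations $w_{a,\gamma}$, $a\in K$. Either some $w_{a,\gamma}$ with $x-a$ a key polynomial equals $w$, which gives case (i) with $n=0$, or we have a strict inequality $w_0<w$.

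\emph{Step 2: one augmentation step.} Suppose $w_i<w$ has been built, with $\phi_i\in\operatorname{KP}(w_i)$ of minimal degree. Consider the tangent direction $\Phi(w_i,w)\subset\operatorname{KP}(w_i)$, which holds by \cite[Theorem 1.15]{V}.
\begin{enumerate}[(a)]
\item If $\deg\Phi(w_i,w)>\deg(w_i)$, pick $\phi_{i+1}\in\Phi(w_i,w)$ and set $w_{i+1}=[w_i;\phi_{i+1},w(\phi_{i+1})]$. Then $w_{i+1}\le w$ holds by comparing $\phi_{i+1}$-expansions and using $w$-minimality. This is an ordinary MLV step.
\item If $\deg\Phi(w_i,w)=\deg(w_i)$, form the family of all ordinary augmentations $[w_i;\chi,w(\chi)]$ with $\chi\in\Phi(w_i,w)$, ordered by the value $w(\chi)$. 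If this set of values has a maximum, augmenting by the corresponding $\chi$ reaches a valuation with a larger tangent degree, or equal to $w$; this step is then absorbed into (a) by the non-redundancy requirement $\phi_i\notin\Phi$. Otherwise we get a continuous family $\mathcal{W}_i$ in the sense of Definition \ref{1.1.14}, and there are three subcases.
\begin{enumerate}[(1)]
\item If $\mathcal{W}_i$ has a stable limit, that limit must be $w$, and we are in case (ii).
\item If $\mathcal{W}_i$ is essential, take an MLV limit key polynomial $\phi_{i+1}$ and set $w_{i+1}=[\mathcal{W}_i;\phi_{i+1},w(\phi_{i+1})]\le w$. This is a limit step, and the degree strictly increases.
\item The in-essential case must be excluded. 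Here the unstable polynomials have degree $\deg(\mathcal{W}_i)$, which contradicts the fact that $w$ bounds the family and every $\chi$ has a stable value $w(\chi)$.
\end{enumerate}
\end{enumerate}

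\emph{Step 3: termination or stable limit.} The degrees $\deg(w_i)$ strictly increase at each step, because an ordinary step requires strict increase and a limit step gives $\deg(w_{i+1})>\deg(\Phi)$. The process therefore produces a finite or countable chain. If at some stage $w_n=w$, we are in case (i). If a stable continuous family reaches $w$, we are in case (ii). Otherwise the chain is infinite. In that case, for each $f$ we have $\deg(w_i)>\deg f$ for large $i$. Then $w_i(f)=w(f)$, since $f$ has degree below a minimal-degree key polynomial on which $w_i$ and $w$ are compared. So $w$ is the stable limit, which is case (iii). If degrees stay bounded in some infinite chain, they stabilize, and the step from then on is forced to be case (ii).

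\emph{Main obstacle.} The hardest step is the verification inside Step 2. One must show that the augmentations built from $\Phi(w_i,w)$ are really bounded by $w$, that they form a continuous family satisfying condition (iii) of Definition \ref{1.1.14}, and above all that the in-essential case cannot occur. I would first try to compare $\phi$-expansions directly, using \cite[Proposition 2.3]{EN4} together with Lemma \ref{kpl1}, to control $\deg_{w_i}$ and so show that unstable polynomials of stable degree would have to be $w$-equivalent to some $\chi_j$. Since the statement is \cite[Theorem 4.3]{EN1}, in practice one may simply cite it.
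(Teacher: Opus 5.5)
The paper gives no proof of this statement; it is quoted from Nart \cite[Theorem 4.3]{EN1}, so your closing remark (simply cite it) is exactly the paper's treatment. Your reconstruction has the right architecture: tangent directions, ordinary augmentations with $\gamma=w(\phi)$, continuous families, limit augmentations, and strictly increasing degrees. The exclusion of the in-essential case also works, once you note two things: the absence of a maximum is what makes each $\chi$ stable, and degree-$m$ polynomials outside $\Phi(w_i,w)$ are squeezed between $w_i$ and $w$. As a standalone proof, however, it has two genuine gaps.

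\textbf{First gap: the maximum-attained case of Step 2(b).} Augmenting $w_i$ by a maximizing $\chi$ creates an ordinary step $w_i\to[w_i;\chi,w(\chi)]$ with $\deg\Phi=\deg(w_i)$. Condition (i) of an MLV chain forbids this. The requirement $\phi_i\notin\Phi(w_i,w_{i+1})$ concerns limit steps only and cannot ``absorb'' anything. You must instead redo the step that produced $w_i$ with $\chi$ in place of $\phi_i$. That requires checking two things:
\begin{enumerate}[(1)]
\item $\chi$ is an admissible choice there, i.e.\ $\chi\in\Phi(w_{i-1},w)$, or $\chi$ is $\mathcal{W}_{i-1}$-unstable of minimal degree;
\item the resulting valuation is $[w_i;\chi,w(\chi)]$.
\end{enumerate}
The clean fix is to build this into the choices from the start. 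Take $\gamma_0=w(x-\alpha_0)$, with $w(x-\alpha_0)$ maximal if possible; as written, Step 1 does not even fix $\gamma_0$. In 2(a) and 2(b)(2), choose $\phi$ of maximal $w$-value among monic polynomials of its degree whenever that maximum exists. Write any monic $\chi$ of that degree as $\phi+a$ with $\deg a<\deg\phi$. Maximality gives $w_{i+1}(\chi)=\min\{w(\phi),w(a)\}=w(\chi)$, so the next tangent direction has strictly larger degree. If no maximum exists, the same computation places you in the no-maximum branch of 2(b) at $w_{i+1}$.

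\textbf{Second gap: Step 2(b)(1).} From $\rho_{\mathcal{W}_i}\le w$ you cannot conclude $\rho_{\mathcal{W}_i}=w$. You need the separate fact that the stable limit of a continuous family has no key polynomials. Given that fact, $\rho_{\mathcal{W}_i}<w$ is impossible, since it would give $\emptyset\neq\Phi(\rho_{\mathcal{W}_i},w)\subset\operatorname{KP}(\rho_{\mathcal{W}_i})$. The fact can be proved along your lines, but it is missing from your sketch:
\begin{enumerate}[(1)]
\item A key polynomial $\phi$ of the limit is stable.
\item For $\rho_{j'}=[\rho_j;\chi_{j'},\gamma_{j'}]$, the equality $\rho_{j'}(\phi)=\rho_j(\phi)$ forces $\phi\sim_{\rho_{j'}}a_0$, where $a_0$ is the constant term of its $\chi_{j'}$-expansion.
\item Hence $\phi\sim_{\rho_{\mathcal{W}_i}}a_0$ with $\deg a_0<\deg\phi$, contradicting minimality.
\item Polynomials of degree $<\deg(\mathcal{W}_i)$ are units, so they cannot be key polynomials either.
\end{enumerate}

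Finally, drop the last sentence of Step 3. You have just shown that degrees strictly increase, so an infinite chain cannot have bounded degrees.
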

\begin{remark}(\cite[Lemma 4.5]{EN1})\label{mlvtl}
	In the above theorem, the MLV chain of $w$ is of type (i)  if and only if $w$ is either valuation-transcendental or has a nontrivial support.
\end{remark}

The length of an MLV chain of $w$ is called the {\bf depth} of $w,$ denoted by $\text{depth}(w).$ For any MLV chain, the {\bf relative gap} of an augmentation step $w_{i}\longrightarrow w_{i+1}$ is defined as the rational number
\[d_{i}:=\frac{\deg(w_{i+1})}{\deg(\Phi(w_{i},w_{i+1}))}= \begin{cases} 1 & \text{if} \ w_{i}\longrightarrow w_{i+1}\ \text{is ordinary }\\ \deg(w_{i+1})/\deg(w_{i})\ &  \text{if} \ w_{i}\longrightarrow w_{i+1} \ \text{is limit }\end{cases} .\] Observe that $d_i\geq 1.$ In fact, $d_i=1$ if and only if $w_{i}\longrightarrow w_{i+1}$ is an ordinary augmentation. Moreover, the degrees $\deg (w_i),$ depth, relative gaps and the nature of the augmentation steps $w_{i}\longrightarrow w_{i+1}$ are independent of the choice of an MLV chain of $w$ (see \cite[Corollary 4.4, Theorem 4.7]{EN1}).

\subsection{Extensions of valuations}

Let ($\overline{K},\bar{v})$ be as before. If $K^{sep}$ is the separable closure of $K$ in $\overline{K}$, then the fixed field of the {\bf decomposition group}\[D_{\bar{v}}=\{\sigma\in \text{Gal}(K^{sep}|K)\mid\bar{v}\circ\sigma=\bar{v}\},\] denoted by $K^h,$ is called the {\bf Henselization} of $K.$ Let $v^h$ be the restriction of $\bar{v}$ to $K^h.$ Since $\overline{K}|K^{sep}$ is purely inseparable, for all $\sigma\in \text{Aut}(\overline{K}|K),$ we have \[\bar{v}\circ\sigma=\bar{v}\iff \sigma\in \text{Aut}(\overline{K}|K^h).\] Hence, the valuation $v^h$ has a unique extension to $\overline{K}.$

A valued field $(K,v)$ is called {\bf Henselian} if $v$ has a unique extension to every finite extension $L$ of $K,$ i.e., $K=K^h.$ Any finite extension $L$ of $K$ is called {\bf unibranched} if $v$ has a unique extension to $L.$

	\begin{remark}(\cite[Section 17]{O} and \cite[Section 3]{EN3})\label{r21}
	For any irreducible polynomial $g$ over $K,$ the extensions of $v$ to the simple field extension $K[x]/(g)$ are in one-to-one correspondence with the irreducible factors of $g$ in $K^h[x].$ More precisely, if $g=G_1G_2\cdots G_s$ is the factorization of $g$ into irreducibles over $K^h,$ then there are exactly $s$ extensions of $v$ to $L=K[x]/(g)$, given by ${v}_i=\bar{v}\circ \lambda_{\theta_i},$ where $ \lambda_{\theta_i}(x+gK[x])=\theta_i$, and $\theta_i\in Z(G_i)$\footnote{For any $f\in \overline{K}[x],$ $Z(f)$ is the set of all roots of $f$ in $\overline{K}.$}, for all $i,\ 1\leq i \leq s.$
	Moreover, there are  exactly $s$ valuations on $K[x]$ with nontrivial support $gK[x]$ induced by ${v}_i$, defined as 
	\[v_{G_i}:K[x]\twoheadrightarrow K[x]/(g)\xrightarrow{{v}_i} \Gamma_{\bar{v}}\cup\{ \infty \}.\]
\end{remark}

If $w$ is any extension of $v$ to $K[x],$ then there exist a unique common extension $w^h$ of $w$ and $v^h$ to $K^h[x]$ (see \cite[Theorem A]{EN5}). In particular, we have 

\begin{proposition}(\cite[Proposition 5.6]{EN3})\label{df1}
Let $w$ be a valuation transcendental extension. Then for any $f\in\operatorname{KP}(w),$ there exist a unique irreducible factor $f^h$ of $f$ in $K^h[x]$ such that $f^h\in \operatorname{KP}(w^h)$ and $\text{in}_{w^h}(f/f^h)$ is a unit in $\mathcal{G}_{w^h}.$ 
Moreover, we have $\deg_w(f)=\deg_{w^h}(f^h).$ In particular, if $f$ has minimal degree in $\operatorname{KP}(w),$ then $f^h$ has minimal degree in $\operatorname{KP}(w^h).$
\end{proposition}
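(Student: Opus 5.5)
The approach is to move the whole question into graded algebras. I would compare $\mathcal{G}_w$ with $\mathcal{G}_{w^h}$ and use the characterization of key polynomials through initial forms: a monic $f$ is a key polynomial for $w$ exactly when $f$ is $w$-minimal and $\text{in}_w(f)$ generates a nonzero prime ideal of $\mathcal{G}_w$.

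\textbf{Step 1: the graded algebras agree.} I would first show that the inclusion $K[x]\subset K^h[x]$ induces an isomorphism of graded algebras $\mathcal{G}_w\cong\mathcal{G}_{w^h}$. In particular $\Gamma_{w^h}=\Gamma_w$ and $\deg(w^h)=\deg(w)$. For this, write $w=w_{\alpha,\delta}$ with $(\alpha,\delta)$ a $(K,v)$-minimal pair. The pair $(\alpha,\delta)$ is still a pair of definition for $w^h$, as the restriction of the same $\overline{w}$. Since $K^h|K$ is immediate, every $a\in K^h$ admits $b\in K$ with $v^h(a-b)>v^h(a)$. Given $g\in K^h[x]$, I would approximate its coefficients in the $\phi$-expansion, with $\phi$ the minimal polynomial of $\alpha$, by elements of $K$. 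I would then compute $w^h$ via $\min$ over the $\phi$-expansion (Proposition 2.3 of \cite{EN4}) to obtain $g_0\in K[x]$ with $w^h(g-g_0)>w^h(g)$. This gives surjectivity; injectivity is clear. This is the step I expect to be the main obstacle. The coefficients of an expansion in $K^h[x]$ involve the extension $K^h$, and one must check that the approximation can be done uniformly, so that the error really has larger $w^h$-value. I would first try this in degree $<\deg\phi$, where $w^h(a)=\bar v(a(\alpha))$ and the approximation reduces to $K(\alpha)$ versus $K^h(\alpha)$. I would then extend by the $\phi$-adic expansion.

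\textbf{Step 2: locating $f^h$.} Let $f\in\operatorname{KP}(w)$ and factor $f=F_1\cdots F_s$ into monic irreducibles in $K^h[x]$. By Step 1, $\text{in}_{w^h}(f)$ is a prime element of the domain $\mathcal{G}_{w^h}$, and $\text{in}_{w^h}(f)=\prod_j\text{in}_{w^h}(F_j)$. Hence this prime divides some $\text{in}_{w^h}(F_j)$; set $f^h:=F_j$. Next I would show that every other factor has a unit initial form. Here I would use that $\deg_{w^h}$ is additive on products, being the $\phi$-adic order of initial forms, and that $\deg_{w^h}(F)=0$ exactly when $\text{in}_{w^h}(F)$ is a unit. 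Since $\deg_w(f)=\deg_{w^h}(f)$ by Step 1, it remains to see that $\text{in}_{w^h}(f^h)$ is associate to $\text{in}_{w^h}(f)$. I would argue this by $w^h$-irreducibility over the Henselian field: an irreducible $F$ over $K^h$ with nonunit initial form has initial form a power of a single prime, by the Henselian uniqueness of the extension to $K^h[x]/(F)$. Combined with the primality of $\text{in}(f)$, no power larger than one can occur.

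This simultaneously gives uniqueness, $\text{in}_{w^h}(f/f^h)$ a unit, and $\deg_{w^h}(f^h)=\deg_{w^h}(f)=\deg_w(f)$. Uniqueness follows because two distinct factors divisible by the prime $\text{in}(f)$ would force $\text{in}(f)^2\mid\text{in}(f)$.

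\textbf{Step 3: minimality.} Finally $f^h\in\operatorname{KP}(w^h)$. Its initial form is prime by Step 2. For $w^h$-minimality I use Lemma \ref{kpl1}: we have $\deg f^h\le\deg f=\deg_w(f)\deg(w)=\deg_{w^h}(f^h)\deg(w^h)$. The reverse inequality $\deg F\ge\deg_{w^h}(F)\deg(w^h)$ holds for every nonzero $F\in K^h[x]$, so equality holds and $f^h$ is $w^h$-minimal. This also forces $\deg f^h=\deg f$. If $f$ has minimal degree, then $\deg_w(f)=1$, hence $\deg f^h=\deg(w^h)$, which is the last claim.
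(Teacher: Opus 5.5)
\textbf{Gap: $\deg(w^h)=\deg(w)$ is false, so Step 3 does not prove minimality.} Your Step 3 uses the equality $\deg(w^h)=\deg(w)$, which Step 1 presents as a consequence of $\mathcal{G}_w\cong\mathcal{G}_{w^h}$. This equality fails in general, and the graded algebra cannot detect it: $\text{in}_{w^h}(\phi)$ and $\text{in}_{w^h}(\phi^h)$ are associates even when $\deg\phi^h<\deg\phi$.

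Here is a counterexample. Let $K=\mathbb{Q}(y)$ and let $v$ be the rank-two valuation obtained by composing the $y$-adic valuation with the $7$-adic valuation on its residue field $\mathbb{Q}$. Then $v(y)=(1,0)$, $v(7)=(0,1)$, and the residue field is $\mathbb{F}_7$.
\begin{itemize}
\item Since $2\equiv 3^2\pmod 7$, Hensel's lemma gives $\alpha\in K^h$ with $\alpha^2=2$.
\item For $\delta=(1,0)$, no $\beta\in K$ satisfies $\bar v(\alpha-\beta)\geq\delta$. Otherwise $\alpha$ and $\beta$ would have the same residue for the coarse ($y$-adic) valuation, whose residue field $\mathbb{Q}$ contains no square root of $2$.
\item Hence $(\alpha,\delta)$ is $(K,v)$-minimal, with $\phi=x^2-2$ and $\deg(w)=2$ for $w=w_{\alpha,\delta}$.
\item But $x-\alpha\in\operatorname{KP}(w^h)$, so $\deg(w^h)=1$ and $\phi^h=x-\alpha$.
\end{itemize}
Your Step 3 would conclude $\deg f^h=\deg f$ for every key polynomial, i.e.\ that every key polynomial is irreducible over $K^h$. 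That fails here. If it were true, the paper's hypothesis $f=f^h$ would be vacuous. (It does hold in rank one, where $K$ is dense in $K^h$.) So neither the $w^h$-minimality of $f^h$ nor the final claim is established.

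\textbf{Step 1 also breaks, and how to repair Step 3.} The same example shows your approximation proof of Step 1 cannot work as planned. When $\phi\neq\phi^h$, $\phi$ is not $w^h$-minimal, so the $\phi$-expansion formula does not compute $w^h$. Likewise $w^h(a)=\bar v(a(\alpha))$ fails for $a\in K^h[x]$ with $\deg a<\deg\phi$; take $a=x-\alpha$. Beyond rank one, immediacy of $K^h|K$ also gives no uniform approximation of coefficients. The isomorphism $\mathcal{G}_w\cong\mathcal{G}_{w^h}$ is true, but it is a substantive theorem of Nart's theory of henselization and should be quoted rather than re-derived.

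To repair Step 3, use the henselian property where it is actually needed.
\begin{itemize}
\item Let $(\alpha',\delta)$ be a $(K^h,v^h)$-minimal pair of definition for $\overline{w}$, and let $\Phi$ be the minimal polynomial of $\alpha'$ over $K^h$.
\item Let $F\in K^h[x]$ be irreducible with $\text{in}_{w^h}(F)$ not a unit. Then $\operatorname{Aut}(\overline{K}|K^h)$ permutes the roots of $F$ transitively. It also permutes the $r'$ pairwise disjoint balls $B(\sigma(\alpha'),\delta)$.
\item Each such ball contains $j(F)>0$ roots of $F$, so $\deg F=r'j(F)$. The same count gives $\deg\Phi=r'j(\Phi)$.
\item Combining with $j(F)=j(\Phi)\deg_{w^h}(F)$ yields $\deg F=\deg_{w^h}(F)\deg(w^h)$, i.e.\ $F$ is $w^h$-minimal. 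Apply this to $F=f^h$.
\end{itemize}

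\textbf{Step 2 can be simplified.} In a domain a prime element is irreducible. So from $\text{in}_{w^h}(f)=\prod_j\text{in}_{w^h}(F_j)$, exactly one factor is a non-unit, and it is associate to $\text{in}_{w^h}(f)$. The ``power of a single prime'' claim is not needed there.
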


	Let $\overline{w}$ be any common extension of $w$ and $\bar{v}$ to $\overline{K}[x].$ For any polynomial $f$ in $K[x],$ we define the {\bf optimizing value} of $f$ as
$$\delta(f):=\max\{\overline{w}(x-\alpha)\mid \alpha\in Z(f)\}.$$ A root $\alpha$ of $f$ is said to be an {\bf optimizing root} of $f$ if $\overline{w}(x-\alpha)=\delta(f).$
This optimizing value of $f$ is independent of the choice of the common extension $\overline{w}.$ In particular, we have

  \begin{theorem}(\cite[Corollary 3.13]{EJP})\label{t21}
 	Let $w$ be a valuation-transcendental extension of $v$ to $K[x],$ and $f\in \operatorname{KP}(w).$ Then the set of optimizing roots of $f$ is equal to $Z(f^h).$
 \end{theorem}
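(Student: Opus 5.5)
Throughout, fix a common extension $\overline{w}$ of $w$ and $\bar{v}$. By the uniqueness result \cite[Theorem A]{EN5}, its restriction to $K^h[x]$ is $w^h$. Write $\overline{w}=\overline{w}_{a,\delta}$ with $(a,\delta)$ a minimal pair of definition. Then for every $\alpha\in\overline{K}$ we have $\overline{w}(x-\alpha)=\min\{\bar{v}(a-\alpha),\delta\}$.

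Factor $f=f^h\cdot G_1\cdots G_r$ into monic irreducibles over $K^h$, with $f^h$ the distinguished factor from Proposition \ref{df1}. For any monic $F\in K^h[x]$ we have $w^h(F)=\sum_{\alpha\in Z(F)}\overline{w}(x-\alpha)$, counted with multiplicity. The plan has two halves.
\begin{enumerate}[(a)]
\item Every root of $f^h$ attains one and the same value $\delta_0:=\overline{w}(x-\alpha)$, for $\alpha\in Z(f^h)$.
\item Every root $\beta$ of every $G_j$ satisfies $\overline{w}(x-\beta)<\delta_0$.
\end{enumerate}
Together these give $\delta(f)=\delta_0$ and show that the optimizing roots are exactly $Z(f^h)$.

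For (a), I use that $K^h$ is Henselian and that $\sigma\in\operatorname{Aut}(\overline{K}|K^h)$ preserves $\bar{v}$. Then $\overline{w}\circ\sigma$ is again a common extension of $w^h$ (hence of $w$) and $\bar{v}$, and it equals $\overline{w}_{\sigma^{-1}a,\delta}$. The key input is that $f^h$ is a key polynomial for $w^h$ over the Henselian field. First, $w^h$-minimality of $f^h$ together with Lemma \ref{kpl1} pins down $\deg f^h=\deg_{w^h}(f^h)\deg(w^h)$. Second, a Krasner-type argument applies: if some conjugate $\alpha$ of a root had $\bar{v}(a-\alpha)$ strictly larger than for another conjugate, then the minimal pair property of $(a,\delta)$ and the $\bar{v}$-invariance of the conjugation would produce a polynomial of smaller degree than required that is $w^h$-divisible by $f^h$, contradicting minimality. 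I therefore aim to show that $\min\{\bar{v}(a-\alpha),\delta\}$ is constant on $Z(f^h)$. I expect this to be the main obstacle, since the value $\bar{v}(a-\alpha)$ genuinely varies over conjugates in general. Only the truncation at $\delta$ saves us, and this must be extracted from the key-polynomial property. The first thing I would try is to compare $\overline{w}$ with $\overline{w}\circ\sigma$ using the fact (stated earlier) that $\delta(f)$ does not depend on the choice of common extension.

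For (b), I exploit that $\text{in}_{w^h}(G_j)$ is a unit in $\mathcal{G}_{w^h}$. Suppose some root $\beta$ of $G_j$ had $\overline{w}(x-\beta)\geq\delta_0$. Then $\text{in}_{\overline{w}}(x-\beta)$ would be divisible, in $\mathcal{G}_{\overline{w}}$, by $\text{in}_{\overline{w}}(x-\alpha)$ for an optimizing root $\alpha$ of $f^h$; note these are not units, since they carry the "transcendental" part of $\overline{w}$. Taking norms down to $K^h$, I would conclude that $f^h\mid_{w^h}G_j$. This contradicts the fact that $\text{in}_{w^h}(G_j)$ is a unit while $\text{in}_{w^h}(f^h)$ is a prime non-unit, as $f^h\in\operatorname{KP}(w^h)$. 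Combining (a) and (b) finishes the proof.
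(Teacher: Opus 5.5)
Your claim (a) is false, and so is the reading of the statement your plan is aiming at. For a \emph{fixed} common extension $\overline{w}$, the roots of $f^h$ need not all have the same value, and truncating at $\delta$ does not help. Here is a counterexample. Take $K=\mathbb{Q}_p$ with $p$ odd, so $K=K^h$, and $\overline{w}=\overline{w}_{\sqrt p,1}$.
\begin{enumerate}[(1)]
\item Every $\beta\in K$ satisfies $\bar v(\beta-\sqrt p)\le 1/2<1$. So $(\sqrt p,1)$ is a minimal pair, and $f=x^2-p=f^h$ is a key polynomial for $w=w_{\sqrt p,1}$.
\item Yet $\overline{w}(x-\sqrt p)=1$ while $\overline{w}(x+\sqrt p)=\bar v(2\sqrt p)=1/2$.
\end{enumerate}
So no Krasner-type argument can prove (a): for this $\overline{w}$ the optimizing roots form $\{\sqrt p\}\neq Z(f^h)$.

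The statement, which the paper quotes without proof, must be read as ``$\beta\in Z(f)$ is optimizing with respect to \emph{some} common extension''. Equivalently, $\overline{w}_{\beta,\delta(f)}$ is a common extension. This is exactly how it is used in the proof of Theorem \ref{p23} and in the remark preceding Proposition \ref{dt2}.

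The missing idea is to let the common extension vary. The group $\operatorname{Aut}(\overline{K}|K^h)$ acts transitively on $Z(f^h)$, and each $\overline{w}\circ\sigma$ is again a common extension. Moreover $(\overline{w}\circ\sigma^{-1})(x-\sigma\alpha)=\overline{w}(x-\alpha)$, and $\sigma$ permutes $Z(f)$. Hence, once one root of $f^h$ is optimizing for $\overline{w}$, every root of $f^h$ is optimizing for a suitable conjugate of $\overline{w}$.

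Part (b) contains the right observation, but the detour you take does not work as written. For $\overline{w}=\overline{w}_{a,\delta}$, $\text{in}_{\overline{w}}(x-\beta)$ is in general not divisible by $\text{in}_{\overline{w}}(x-\alpha)$: both are primes of degree $\delta$, and they are associates only when $\bar v(\alpha-\beta)>\delta$. Also, ``taking norms'' to get $f^h\mid_{w^h}G_j$ is not justified. You do not need either step:
\begin{enumerate}[(1)]
\item $\text{in}_{\overline{w}}(x-\beta)$ is a unit if and only if $\bar v(a-\beta)<\delta$. Indeed, if $\bar v(a-\beta)\ge\delta$ then $\overline{w}=\overline{w}_{\beta,\delta}$, so $x-\beta$ is a key polynomial for $\overline{w}$.
\item $\mathcal{G}_{w^h}$ embeds in $\mathcal{G}_{\overline{w}}$, with $\text{in}_{w^h}(G_j)=\prod_{\beta\in Z(G_j)}\text{in}_{\overline{w}}(x-\beta)$.
\item Since $\text{in}_{w^h}(f/f^h)$ is a unit, every root $\beta$ of $f/f^h$ satisfies $\overline{w}(x-\beta)<\delta$. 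This holds for every common extension, since all of them restrict to $w^h$.
\end{enumerate}

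You still need the value $\delta$ to be attained on $Z(f)$, hence on $Z(f^h)$. Do not assert without proof that the non-unit $\text{in}_{w^h}(f^h)$ remains a non-unit in $\mathcal{G}_{\overline{w}}$. Argue instead by contradiction: suppose no root of $f$ lies in $B(a,\delta)$. Then $f\sim_{\overline{w}}f(a)=g(a)\sim_{\overline{w}}g$, where $g$ is the remainder of $f$ modulo the minimal polynomial of $a$; the roots of $g$ avoid $B(a,\delta)$ by minimality of the pair. Hence $f\sim_w g$ with $\deg g<\deg f$, contradicting the $w$-minimality of $f$. With (a) replaced by the conjugation argument, this proves the theorem.
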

 
 \begin{theorem}(\cite[Remark 3.14]{EJP})\label{t2}
 	Let $w$ be valuation-transcendental, and $f\in\operatorname{KP}(w).$ Then for any common extension $\overline{w}$ of $w$ and $\bar{v}$ to $\overline{K}[x]$, we have $\overline{w}=\overline{w}_{\alpha,\delta},$ where $\alpha\in Z(f^h)$ and $\delta=\delta(f).$ Moreover, $\delta=\delta(f)$ is independent of the choice of the key polynomial.
 \end{theorem}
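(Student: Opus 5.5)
The plan is to start from an arbitrary pair of definition for $\overline{w}$ and then move its center to a root of $f$ using Lemma \ref{il1}.

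First, since $w$ is valuation-transcendental, \cite[Corollary 3.7]{W} gives $\overline{w}=\overline{w}_{\beta,\delta'}$ for some $(\beta,\delta')\in\overline{K}\times\Gamma$. For every $\theta\in\overline{K}$ this yields
\[\overline{w}(x-\theta)=\min\{\bar{v}(\beta-\theta),\delta'\}\leq\delta'.\]
Taking $\theta$ to be an optimizing root $\alpha$ of $f$, we get $\delta(f)=\overline{w}(x-\alpha)\leq\delta'$.

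The main step is the reverse inequality $\delta(f)\geq\delta'$, which I would prove by contradiction. Suppose $\delta(f)<\delta'$. Then $\bar{v}(\beta-\theta)<\delta'$ for every $\theta\in Z(f)$. Expanding $x-\theta=(x-\beta)+(\beta-\theta)$ gives $\overline{w}(x-\beta)=\delta'>\bar v(\beta-\theta)$, so $x-\theta\sim_{\overline{w}}\beta-\theta$. Multiplying over all roots, counted with multiplicity, gives $f\sim_{\overline{w}} f(\beta)$, a nonzero constant of $\overline{K}$. Hence $\mathrm{in}_{\overline{w}}(f)$ is a homogeneous unit in $\mathcal{G}_{\overline{w}}$.

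To reach a contradiction, I would transfer this back to $\mathcal{G}_w$. The extension $\mathcal{G}_w\subset\mathcal{G}_{\overline{w}}$ is integral, being induced from the algebraic extension $K[x]\subset\overline{K}[x]$. Therefore a homogeneous element of $\mathcal{G}_w$ that is a unit in $\mathcal{G}_{\overline{w}}$ is already a unit in $\mathcal{G}_w$: its inverse satisfies a monic equation, which can be cleared to express the inverse as a polynomial in the element divided by a homogeneous unit. Alternatively, one can argue directly with degrees via Lemma \ref{kpl1}, since $\deg_w(f)\geq1$ for a key polynomial. So $\mathrm{in}_w(f)$ would be a unit. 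But a key polynomial is never a $w$-unit: if it were, $f\mid_w 1$, contradicting $w$-minimality since $\deg 1<\deg f$. This transfer of units is the step I expect to require the most care.

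With $\delta(f)=\delta'$ established, we have $\min\{\bar v(\beta-\alpha),\delta'\}=\delta'$, so $\bar{v}(\beta-\alpha)\geq\delta'$. Lemma \ref{il1} then shows that $(\alpha,\delta(f))$ is again a pair of definition for $\overline{w}$. By Theorem \ref{t21}, the optimizing root $\alpha$ lies in $Z(f^h)$.

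Finally, for independence: $\delta'$ is determined by $\overline{w}$ alone, by the uniqueness of the radius in Lemma \ref{il1}. Hence $\delta(f)=\delta'$ is the same for every $f\in\operatorname{KP}(w)$. Since $\delta(f)$ is also independent of the choice of $\overline{w}$, as noted before Theorem \ref{t21}, it is an invariant of $w$.
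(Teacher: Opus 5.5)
The paper gives no proof of this statement; it is quoted from Nart's work. So I am judging your argument on its own terms. Everything outside the unit-transfer step is correct:
\begin{itemize}
\item the formula $\overline{w}(x-\theta)=\min\{\bar v(\beta-\theta),\delta'\}$;
\item the bound $\delta(f)\le\delta'$;
\item the derivation of $f\sim_{\overline w}f(\beta)$ from $\delta(f)<\delta'$;
\item the concluding use of Lemma \ref{il1} and Theorem \ref{t21};
\item the independence argument.
\end{itemize}
One small point on the first step: you need $\overline w=\overline w_{\beta,\delta'}$ for an \emph{arbitrary} common extension. This follows by applying the cited result over $\overline K$, since $\overline w$ is itself valuation-transcendental.

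The gap is your claim that $\mathcal G_w\subset\mathcal G_{\overline w}$ is integral because $K[x]\subset\overline K[x]$ is. That inference is false in general. Passing to initial forms in a monic relation $b^n+a_{n-1}b^{n-1}+\cdots+a_0=0$ keeps only the terms of minimal value, and the top surviving coefficient need not be a unit. For example, take $A=k[x,z]$ with the monomial valuation $w(x)=1$, $w(z)=1/2$, and an extension in which a root $b$ of $T^2-zT+x^2$ has value $3/2$. Then $\mathrm{in}(z)\,\mathrm{in}(b)=\mathrm{in}(x)^2$, so $\mathrm{in}(b)=X^2/Z$ (with $X=\mathrm{in}(x)$, $Z=\mathrm{in}(z)$). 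This is not integral over $\mathrm{gr}(A)=k[X,Z]$. Your fallback via Lemma \ref{kpl1} does not help either, since relating $\deg_w(f)$ to $\overline w$ is exactly the point at issue.

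Integrality does hold in your setting, but it needs its own proof. Write $\mathcal G_{\overline w}=\mathcal G_{\bar v}[Y]$ with $Y=\mathrm{in}_{\overline w}(x-\beta)$; homogeneous elements of $\mathcal G_{\bar v}$ are integral over $\mathcal G_v$. For the minimal polynomial $g$ of $\beta$ one has $\mathrm{in}_{\overline w}(g)=u\,P(Y)$, with $u$ a homogeneous unit and $P$ monic of positive degree. Hence $Y$ is integral over $\mathcal G_{\bar v}[\mathrm{in}_w(g)]$.

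A simpler repair bypasses graded algebras entirely:
\begin{enumerate}
\item By Lemma \ref{il1}, choose $(\beta,\delta')$ to be a minimal pair. Then the minimal polynomial $\phi$ of $\beta$ is a minimal-degree key polynomial, so $\deg\phi\le\deg f$.
\item Let $r$ be the remainder of $f$ modulo $\phi$. Then $r(\beta)=f(\beta)\neq 0$.
\item Every root of $r$ has degree $<\deg\beta$, so by minimality it lies outside $B(\beta,\delta')$. Lemma \ref{dpl12} then gives $r\sim_{\overline w}r(\beta)$.
\item Combined with $f\sim_{\overline w}f(\beta)$, this yields $f\sim_w r$ with $r\neq0$ and $\deg r<\deg f$. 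Thus $f\mid_w r$, contradicting the $w$-minimality of $f$.
\end{enumerate}
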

 
 \section{Common extensions and $j$-invariant}
 Let $(K,v), (\overline{K},\bar{v})$ be as before and $w$ a valuation-transcendental extension of $v$ to $K[x].$ In this section, we give an explicit formula for the number of common extensions $\overline{w}$ of $w$ and $\bar{v},$ in terms of $j$-invariant. As a consequence, we obtain some relation between the irreducibility of different key polynomials over $K^h.$
 
 In 2022, Dutta \cite{AD1}  introduced the notion of $j$-invariant. He later generalized it, along with some properties, in \cite{AD2} and \cite{AR}.
 \begin{definition}
  Let $\overline{w}$ be any common extension of $w$ and $\bar{v}$ defined by a pair $(\alpha,\delta)$. For any polynomial $f\in\overline{K}[x],$ the {\bf$j$-invariant} of $f$, denoted by $j(f),$ is the number of roots $\beta$ of $f,$ counting with multiplicities, such that $\bar{v}(\beta-\alpha)\geq \delta.$	
 \end{definition}
    The $j$-invariant is independent of the choice of $\overline{w}$ (see \cite[Theorem 3.1]{AD2}). 
 
\begin{theorem}\label{p23}
	Let $w$ be a valuation-transcendental extension of $v$ to $K[x],$ and $f\in \operatorname{KP}(w).$ If there are $r$ common extensions of $w$ and $\bar{v}$ to $\overline{K}[x],$ then \[r=\frac{\deg f^h}{j(f^h)}.\]
\end{theorem}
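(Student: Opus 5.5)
We will count common extensions via their pairs of definition, using Theorem \ref{t2}. Every common extension $\overline{w}$ of $w$ and $\bar{v}$ has the form $\overline{w}_{\alpha,\delta}$ with $\alpha\in Z(f^h)$ and $\delta=\delta(f)$ fixed. Conversely, each $\alpha\in Z(f^h)$ does give a common extension. Indeed, $Z(f^h)$ is a single orbit under $\text{Aut}(\overline{K}|K^h)$, since $f^h$ is irreducible over $K^h$. Fixing one common extension $\overline{w}_{\alpha_0,\delta}$, we apply $\sigma\in\text{Aut}(\overline{K}|K^h)$ to it. The result $\overline{w}_{\alpha_0,\delta}\circ\sigma^{-1}$ is again a valuation on $\overline{K}[x]$ extending $w$, because $\sigma$ fixes $K[x]$. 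It extends $\bar{v}$ because $\bar{v}\circ\sigma=\bar{v}$ for such $\sigma$. Finally, it equals $\overline{w}_{\sigma(\alpha_0),\delta}$. So the map $\alpha\mapsto\overline{w}_{\alpha,\delta}$ from $Z(f^h)$ onto the set of common extensions is surjective.

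Next we determine the fibres. By Lemma \ref{il1}, $\overline{w}_{\alpha,\delta}=\overline{w}_{\alpha',\delta}$ if and only if $\bar{v}(\alpha-\alpha')\geq\delta$. Hence the fibres are exactly the intersections $Z(f^h)\cap B(\alpha,\delta)$, and these balls partition $Z(f^h)$ into the equivalence classes of $\bar{v}(\alpha-\alpha')\geq\delta$. We need to know the multiplicities involved. Since $f$ is irreducible over $K$ and $f^h$ is an irreducible factor of $f$ over $K^h$, all roots of $f^h$ have the same multiplicity $p^m$ (the inseparable degree). We can either work with roots counted with multiplicity throughout, or note that every class is a union of equal-multiplicity roots. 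In either case, the class of $\alpha$ contains exactly $j(f^h)$ roots counted with multiplicity, by definition of the $j$-invariant taken with respect to $\overline{w}_{\alpha,\delta}$.

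The remaining point is that every class has the same size. This follows because $\text{Aut}(\overline{K}|K^h)$ acts transitively on the classes and preserves $\bar{v}$, so it maps the ball $B(\alpha,\delta)\cap Z(f^h)$ bijectively and multiplicity-preservingly onto $B(\sigma\alpha,\delta)\cap Z(f^h)$. The independence of $j$ from the choice of $\overline{w}$, cited from \cite{AD2}, gives the same conclusion directly. Counting roots with multiplicity then yields $\deg f^h=r\cdot j(f^h)$, which is the claimed formula $r=\deg f^h/j(f^h)$.

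The main obstacle is the careful bookkeeping that the correspondence between balls and common extensions is exactly bijective. It rests on two facts: every root of $f^h$ actually yields a common extension, which is the transitivity/Galois-conjugation step, and distinct balls yield distinct valuations, which is Lemma \ref{il1}. We also need the inseparable multiplicities to be handled uniformly, so that the count with multiplicity matches $\deg f^h$.
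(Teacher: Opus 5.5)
Your proposal is correct and follows the paper's argument. Theorem \ref{t2} and Lemma \ref{il1} partition $Z(f^h)$, counted with multiplicity, into the balls $B(\alpha_i,\delta)$ attached to the $r$ common extensions; each ball contains $j(f^h)$ roots, so $\deg f^h=r\,j(f^h)$. The only difference is that the paper cites Theorem \ref{t21} to place every root of $f^h$ in the ball of some common extension, whereas you show this directly by conjugating a fixed $\overline{w}_{\alpha_0,\delta}$ by $\sigma\in\operatorname{Aut}(\overline{K}|K^h)$, which also gives equal ball sizes without citing the independence of $j$.
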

\begin{proof}
 By Theorem \ref{t2} we have that, the $r$ common extensions (say)  $\overline{w}_1, \overline{w}_2, \ldots ,\overline{w}_r ,$ are given by\[\overline{w}_1=\overline{w}_{\alpha_1,\delta}, \overline{w}_2=\overline{w}_{\alpha_2,\delta}, \ldots ,\overline{w}_r=\overline{w}_{\alpha_r,\delta},\] where $\alpha_1, \alpha_2, \ldots ,\alpha_r$ are roots of $f^h,$ and $\delta=\delta(f).$\\
Now take an arbitrary $\beta\in Z(f^h),$ then by Theorem \ref{t21}, $\beta$ is an optimizing root of $f.$ Therefore, we have \[\overline{w}_{\alpha_i,\delta}(x-\beta)=\delta,\  \text{for some} \ i,\ 1\leq i\leq r.\]
Since $\overline{w}_{\alpha_i,\delta}(x-\alpha_i)=\delta,$ we have $\bar{v}(\beta-\alpha_i)\geq \delta,$ and hence, $\overline{w}_{\alpha_i,\delta}=\overline{w}_{\beta,\delta}.$ Also for any $j\neq i,$ as $\overline{w}_{\alpha_j,\delta}\neq \overline{w}_{\alpha_i,\delta},$  we have $\bar{v}(\beta-\alpha_j)<\delta.$ Hence, in view of the definition of $j$-invariant, it now follows that \[\deg f^h=r j(f^h).\] 	
\end{proof}
 Observe that by Theorem \ref{t2}, $j(f^h)>0$ for any $f\in\operatorname{KP}(w),$ therefore $(\deg f^h)/j(f^h)$ is well defined. Since $r$ is a constant for $w$, we have 
\begin{corollary}
	Let $w$ be a valuation-transcendental extension of $v$ to $K[x],$ and $f, g\in \operatorname{KP}(w).$ Then \[\frac{\deg f^h}{j(f^h)}=\frac{\deg g^h}{j(g^h)} .\]
\end{corollary}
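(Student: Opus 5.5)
The corollary follows once we know that the number $r$ of common extensions of $w$ and $\bar{v}$ to $\overline{K}[x]$ depends only on $w$, and not on any key polynomial. I will then apply Theorem \ref{p23} twice, once to $f$ and once to $g$, and compare.

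First I will check that $r$ is finite and intrinsic to $w$. It is defined as the cardinality of the set of valuations on $\overline{K}[x]$ that restrict to $w$ on $K[x]$ and to $\bar{v}$ on $\overline{K}$. This definition makes no reference to $f$ or $g$. Finiteness comes from Theorem \ref{t2}. Fixing any $f\in\operatorname{KP}(w)$, every common extension has the form $\overline{w}_{\alpha,\delta}$ with $\alpha\in Z(f^h)$ and $\delta=\delta(f)$. Hence there are at most $\deg f^h$ of them, and at least one exists, since common extensions always exist.

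Next I apply Theorem \ref{p23} to $f$, which gives $r=\deg f^h/j(f^h)$. Applying it to $g$ in the same way gives $r=\deg g^h/j(g^h)$.

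Equating the two expressions for $r$ gives the claim. Both quotients make sense: as observed just before the statement, $j(f^h),j(g^h)>0$, because the roots $\alpha$ occurring in Theorem \ref{t2} satisfy $\bar v(\alpha-\alpha)=\infty\ge\delta$.

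The only point that needs care is the well-definedness of the $j$-invariant. It is defined through a chosen common extension $\overline{w}_{\alpha,\delta}$, and the same number $r$ must appear in both applications of Theorem \ref{p23}. By \cite[Theorem 3.1]{AD2}, $j$ does not depend on the choice of $\overline{w}$, so this step is not a real obstacle.
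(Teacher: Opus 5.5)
Your proposal is correct and follows the paper's argument: since the number $r$ of common extensions of $w$ and $\bar v$ depends only on $w$, applying Theorem \ref{p23} to $f$ and to $g$ gives $\deg f^h/j(f^h)=r=\deg g^h/j(g^h)$. Your extra checks (finiteness of $r$, positivity of $j$, and independence of $j$ from the choice of $\overline{w}$) are sound and are exactly the remarks the paper makes around this corollary.
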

Note that if $f\in\operatorname{KP}(w)$ and $\beta\in Z(f/f^h),$ then by Theorem \ref{t21}, $\beta$ is not an optimizing root of $f,$ i.e., $\overline{w}_{\beta,\delta}$ is not a common extension of $w$ and $\bar{v},$ which implies that $\bar{v}(\beta-\alpha_i)<\delta,$ for every $i, \ 1\leq i \leq r.$ Hence, we have 
\begin{proposition}\label{dt2}
	Let $w$ be a valuation-transcendental extension of $v$ to $K[x],$ and $f\in \operatorname{KP}(w).$ Then 
	\[j(f)=j(f^h).\]
\end{proposition}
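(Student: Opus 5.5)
We need to show $j(f)=j(f^h)$ for $f\in\operatorname{KP}(w)$. Since $f^h$ divides $f$ in $K^h[x]$, write $f=f^h\cdot g$ with $g=f/f^h\in K^h[x]$. The $j$-invariant counts roots with multiplicity, so it is additive over products: $j(f)=j(f^h)+j(g)$. It therefore suffices to prove $j(g)=0$, that is, no root $\beta$ of $g$ satisfies $\bar{v}(\beta-\alpha)\geq\delta$, where $(\alpha,\delta)$ defines a chosen common extension $\overline{w}$.

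Fix $\overline{w}=\overline{w}_{\alpha,\delta}$. By Theorem \ref{t2} we may take $\alpha\in Z(f^h)$ and $\delta=\delta(f)$. Since the $j$-invariant does not depend on the choice of $\overline{w}$, this choice is harmless.

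Now let $\beta\in Z(g)$ and suppose, for contradiction, that $\bar{v}(\beta-\alpha)\geq\delta$.
\begin{enumerate}[(i)]
\item Then $\overline{w}(x-\beta)=\min\{\delta,\bar{v}(\alpha-\beta)\}=\delta=\delta(f)$.
\item Hence $\beta$ is an optimizing root of $f$ with respect to $\overline{w}$.
\item By Theorem \ref{t21}, which is stated independently of the common extension chosen, this gives $\beta\in Z(f^h)$.
\end{enumerate}

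The main obstacle is the multiplicity issue. The conclusion $\beta\in Z(f^h)$ only says that $\beta$ is a root of both $f^h$ and $g$, not that $\beta\notin Z(g)$. I handle this by separability. Since $f^h$ is irreducible over $K^h$, a common root would force $f^h\mid g$, hence $(f^h)^2\mid f$.

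\begin{enumerate}[(i)]
\item If $f$ is separable, this is impossible, and we get the contradiction.
\item In general I would use Proposition \ref{df1}: $\text{in}_{w^h}(g)$ is a unit in $\mathcal{G}_{w^h}$. If $f^h\mid g$, then $f^h$ would $w^h$-divide a unit. That contradicts $f^h\in\operatorname{KP}(w^h)$, since key polynomials are not units (not $w$-divisors of units).
\end{enumerate}

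Hence no root of $g$ lies in $B(\alpha,\delta)$, so $j(g)=0$ and $j(f)=j(f^h)$.
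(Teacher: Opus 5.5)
Your proof is correct and is essentially the paper's argument: split $f=f^h\cdot(f/f^h)$ and use Theorem \ref{t21} to see that no root of $f/f^h$ lies in $B(\alpha,\delta)$. Strictly, for a fixed $\overline{w}$ the optimizing roots can form a proper subset of $Z(f^h)$, but you only need that they lie in $Z(f^h)$, which holds for every common extension. Your extra step excluding a common root of $f^h$ and $f/f^h$ via Proposition \ref{df1} is valid and makes explicit a point the paper passes over. It is in fact automatic: $K^h|K$ is separable, so the irreducible factors over $K^h$ of the irreducible polynomial $f$ are pairwise distinct.
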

It may be pointed out that the above result is already proved in \cite[Proposition 2.7]{AR}. Moreover, we also know that
\begin{proposition}(\cite[Proposition 3.5]{AD2})\label{dp1}
	Let $w$ be a valuation-transcendental extension of $v$ to $K[x],$ and $\phi\in \operatorname{KP}(w)$ of minimal degree.  Then for any $f\in K[x],$ we have
	\[j(f)=j(\phi)\deg_w(f).\]
\end{proposition}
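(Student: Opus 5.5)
We plan to compute $j(f)$ through a perturbation of the radius $\delta$. Fix a common extension $\overline{w}=\overline{w}_{\alpha,\delta}$ of $w$ and $\bar{v}$. By Theorem \ref{t2} we may take $\alpha\in Z(\phi^h)$ and $\delta=\delta(\phi)$.

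The first ingredient is the root formula. For monic $g=\prod_{\beta}(x-\beta)\in\overline{K}[x]$ we have
\[\overline{w}_{\alpha,\delta}(g)=\sum_{\beta}\min\{\bar{v}(\alpha-\beta),\delta\},\]
which follows because $\overline{w}_{\alpha,\delta}$ is a valuation and $\overline{w}_{\alpha,\delta}(x-\beta)=\min\{\bar{v}(\alpha-\beta),\delta\}$.

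Next, enlarge the value group to $\Gamma\oplus\mathbb{Z}\epsilon$, ordered lexicographically so that $\epsilon>0$ is infinitesimal. Consider $w'$, the restriction to $K[x]$ of $\overline{w}_{\alpha,\delta-\epsilon}$. Any element of $\Gamma$ strictly less than $\delta$ is also $\le \delta-\epsilon$. Hence, by the root formula, each root $\beta$ of $g$ with $\bar{v}(\alpha-\beta)\ge\delta$ loses exactly $\epsilon$, and every other root keeps its contribution. This gives the key identity
\[w'(g)=w(g)-j(g)\,\epsilon \qquad \text{for all nonzero } g\in K[x],\]
and constants do not matter.

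Now take the $\phi$-expansion $f=\sum_i a_i\phi^i$ with $\deg a_i<\deg\phi$. We claim $j(a_i)=0$ for every $i$. Suppose instead that some root $\beta$ of $a_i$ satisfies $\bar{v}(\beta-\alpha)\ge\delta$. Then $(\beta,\delta)$ is also a pair of definition for $\overline{w}$ by Lemma \ref{il1}, while $\deg\beta\le\deg a_i<\deg\phi=\deg\alpha$. This contradicts the $(K,v)$-minimality of $(\alpha,\delta)$. That minimality has to be justified: $\phi$ has minimal degree in $\operatorname{KP}(w)$, and minimal pairs of definition have centre of degree $\deg(w)$ (cf. \cite[Theorem 1.1]{JN}). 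We expect this step to be the main technical point to nail down; if needed, we would first replace $\alpha$ by a minimal-pair centre $\alpha'$ in the same ball and check that the count $j$ does not change, which holds since $j$ is independent of the choice of pair of definition.

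With the claim, the key identity applied to each term gives
\[w'(a_i\phi^i)=w(a_i)+i\,w(\phi)-i\,j(\phi)\,\epsilon.\]
Among the indices minimizing $w(a_i)+iw(\phi)$, the largest one, namely $i_0=\deg_w(f)$, strictly minimizes this expression in the lexicographic order, because $j(\phi)>0$ (Theorem \ref{t2}, as $\alpha$ is a root of $\phi$). By the ultrametric inequality with a unique minimum,
\[w'(f)=w(f)-\deg_w(f)\,j(\phi)\,\epsilon.\]
Comparing with $w'(f)=w(f)-j(f)\epsilon$ yields $j(f)=j(\phi)\deg_w(f)$.
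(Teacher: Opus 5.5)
Your proof is correct. The paper gives no proof of this proposition; it is quoted from \cite[Proposition 3.5]{AD2}. So there is no in-paper argument to compare with, and yours is a self-contained derivation from the tools recalled in Section 2.

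The step you flagged closes at once with a fact stated in the Preliminaries. Take a minimal pair of definition $(\alpha',\delta)$ of the same $\overline{w}$. The minimal polynomial of $\alpha'$ is a minimal-degree key polynomial for $w$, so $\deg\alpha'=\deg(w)=\deg\phi$. By Lemma \ref{il1} the balls $B(\alpha,\delta)$ and $B(\alpha',\delta)$ coincide, so every element of this ball has degree at least $\deg\phi=\deg\alpha$. Hence $(\alpha,\delta)$ is itself minimal, and $j(a_i)=0$ whenever $\deg a_i<\deg\phi$. No citation is needed for $j(\phi)\ge 1$, since $\alpha$ is a root of $\phi$ lying in the ball.

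Conceptually, your $\epsilon$ reads off the left-hand slope of $\gamma\mapsto\overline{w}_{\alpha,\gamma}(f)$ at $\gamma=\delta$. The root formula shows this slope is $j(f)$, and the $\phi$-expansion shows it is $j(\phi)\deg_w(f)$. Perturbing infinitesimally in $\Gamma\oplus\mathbb{Z}\epsilon$, rather than choosing an actual radius $\delta'<\delta$ in $\Gamma$, keeps all the finitely many strict inequalities intact after the perturbation: those for roots outside the ball and for indices outside $S_{w,\phi}(f)$. With an actual radius this is not automatic when $\Gamma$ is discrete or of higher rank. The infinitesimal version also handles the residue-transcendental and value-transcendental cases uniformly.
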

Using Theorem \ref{p23} and the above two propositions together with Lemma \ref{kpl1}, we obtain 
\begin{proposition}
Let $w$ be a valuation-transcendental extension of $v$ to $K[x],$ and $\phi\in \operatorname{KP}(w)$	of minimal degree. Then for any $f, g\in \operatorname{KP}(w),$ we have \begin{enumerate}[(i)]
	\item $\deg f=\deg g\iff j(f)=j(g)\iff \deg f^h=\deg g^h,$
		\item $\deg f<\deg g\iff j(f)<j(g)\iff \deg f^h<\deg g^h,$
			\item $\deg f\divides\deg g\iff j(f)\divides j(g)\iff \deg f^h\divides\deg g^h.$
\end{enumerate}
\end{proposition}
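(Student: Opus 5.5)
The plan is to reduce all three equivalence chains to comparing the integers $\deg_w(f)$ and $\deg_w(g)$. Fix $\phi\in\operatorname{KP}(w)$ of minimal degree and put $m=\deg(w)=\deg\phi$. The proof then proceeds in three steps.

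\emph{First step: the degrees $\deg f$.} Every key polynomial is $w$-minimal. So Lemma \ref{kpl1} gives
\[\deg f=m\,\deg_w(f),\qquad \deg g=m\,\deg_w(g).\]

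\emph{Second step: the $j$-invariants.} Proposition \ref{dp1} gives
\[j(f)=j(\phi)\deg_w(f),\qquad j(g)=j(\phi)\deg_w(g).\]
Here $j(\phi)>0$. Indeed, by Proposition \ref{dt2} we have $j(\phi)=j(\phi^h)$, and $j(\phi^h)>0$ by Theorem \ref{t2}, as observed after Theorem \ref{p23}.

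\emph{Third step: the degrees $\deg f^h$.} By Theorem \ref{p23}, $\deg f^h=r\,j(f^h)$, where $r\geq 1$ is the number of common extensions of $w$ and $\bar v$. This number depends only on $w$. Combining with Proposition \ref{dt2},
\[\deg f^h=r\,j(f)=r\,j(\phi)\deg_w(f),\]
and likewise $\deg g^h=r\,j(\phi)\deg_w(g)$.

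So each of the three quantities attached to $f$ equals a positive constant times $\deg_w(f)$. The constants are $m$, $j(\phi)$ and $r\,j(\phi)$, and none of them depends on $f$. The same constants appear for $g$. Multiplying two positive integers by the same positive integer $c$ preserves equality, strict order and divisibility. Therefore each of (i), (ii), (iii) is equivalent to the corresponding relation between $\deg_w(f)$ and $\deg_w(g)$, and the three equivalences follow at once.

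The only point needing care is that the constant $r$ is the same for $f$ and $g$. This holds because $r$ counts the common extensions of $w$ and $\bar v$, which is intrinsic to $w$. This is exactly the content of the Corollary following Theorem \ref{p23}, so there is no real obstacle.
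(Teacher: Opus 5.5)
Your proposal is correct and follows the same route the paper indicates: Lemma \ref{kpl1}, Proposition \ref{dp1}, Proposition \ref{dt2} and Theorem \ref{p23} express $\deg f$, $j(f)$ and $\deg f^h$ as the fixed positive multiples $\deg(w)$, $j(\phi)$ and $r\,j(\phi)$ of $\deg_w(f)$. The three equivalences then follow by comparing $\deg_w(f)$ with $\deg_w(g)$.
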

\begin{proposition}\label{bp3}
	Let $w$ be a valuation-transcendental extension of $v$ to $K[x].$ Then for any $f, g\in \operatorname{KP}(w),$ we have \begin{enumerate}[(i)]
		\item $\deg f^h\divides \deg f \iff \deg g^h\divides \deg g,$
		\item $f=f^h \iff g=g^h.$
	\end{enumerate}
\end{proposition}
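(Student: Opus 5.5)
The plan is to express everything through the constant $r$, the number of common extensions of $w$ and $\bar{v}$ to $\overline{K}[x]$, which depends only on $w$. Fix $\phi\in\operatorname{KP}(w)$ of minimal degree. For any $f\in\operatorname{KP}(w)$, Proposition \ref{dt2} and Proposition \ref{dp1} give
\[j(f^h)=j(f)=j(\phi)\deg_w(f).\]
Since $f$ is $w$-minimal, Lemma \ref{kpl1} gives $\deg f=\deg_w(f)\deg(w)=\deg_w(f)\deg\phi$. Substituting into Theorem \ref{p23},
\[\deg f^h = r\,j(f^h)= r\,j(\phi)\deg_w(f)=\frac{r\,j(\phi)}{\deg\phi}\,\deg f .\]
Write $c:=r\,j(\phi)/\deg\phi$. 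This rational number depends only on $w$, since $r$, $j(\phi)$ and $\deg\phi=\deg(w)$ are invariants of $w$. The upshot is that the ratio $\deg f^h/\deg f=c$ is the same for every key polynomial $f$.

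For (ii), note that $f^h$ is a monic factor of $f$. Hence $f=f^h$ if and only if $\deg f^h=\deg f$, which happens if and only if $c=1$. The condition $c=1$ does not depend on the choice of key polynomial, so $f=f^h\iff g=g^h$.

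For (i), write $\deg f=\deg f^h\cdot q_f$ with $q_f=1/c$. Since $\deg f^h\mid\deg f$ means exactly that $\deg f/\deg f^h=1/c$ is an integer, this condition again depends only on $w$. So $\deg f^h\mid\deg f\iff 1/c\in\mathbb{Z}\iff\deg g^h\mid\deg g$. I should double-check that $\deg f^h$ divides $\deg f$ is literally equivalent to the quotient being an integer; it is, because both degrees are positive integers.

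The only delicate point is ensuring the right-hand sides genuinely depend only on $w$. That is, $r$ must be independent of $f$ (it is a count attached to $w$ alone), and $j(\phi)$ must not depend on which minimal-degree key polynomial is chosen. The latter follows from applying Proposition \ref{dp1} to two minimal-degree key polynomials $\phi,\phi'$, for which $\deg_w(\phi)=\deg_w(\phi')=1$. Beyond that, the argument is a direct combination of the cited results.
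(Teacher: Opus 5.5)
Your proof is correct, but it reaches the constancy of $\deg f^h/\deg f$ by a different route from the paper. The paper does not use Theorem~\ref{p23} here. Instead, it applies Proposition~\ref{dp1} a second time, over $K^h$, to $w^h$ and its minimal-degree key polynomial $\phi^h$ (Proposition~\ref{df1}), and combines this with Proposition~\ref{dt2} to get $\deg_w(f)=\deg_{w^h}(f^h)$. It then applies Lemma~\ref{kpl1} to both $w$ and $w^h$ to conclude $\deg f/\deg f^h=\deg\phi/\deg\phi^h$. Since Proposition~\ref{df1} already asserts $\deg_w(f)=\deg_{w^h}(f^h)$, that argument really needs only Lemma~\ref{kpl1} on both sides. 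You stay over $K$ and let the number $r$ of common extensions carry the information through $\deg f^h=r\,j(f^h)$. This avoids running Proposition~\ref{dp1} and Lemma~\ref{kpl1} for $w^h$, and it expresses the ratio as $\deg f^h/\deg f=r\,j(\phi)/\deg(w)$, tying it to the count of common extensions. The paper's version instead names the constant intrinsically as $\deg\phi^h/\deg\phi$. The two agree, since taking $f=\phi$ in your identity gives $\deg\phi^h=r\,j(\phi)$. Also, the ``delicate point'' you flag at the end is not needed: you fix one $\phi$ and use it for both $f$ and $g$, so whether $j(\phi)$ depends on that choice never enters.
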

\begin{proof}
	Let $\phi\in\operatorname{KP}(w)$ of minimal degree. Then by Proposition \ref{dp1}, we have \[j(f)=j(\phi)\deg_w(f).\] Since $\phi^h\in \operatorname{KP}(w^h) $ of minimal degree (by Proposition \ref{df1}), again on applying the Proposition \ref{dp1} for $w^h$, we have \[j(f^h)=j(\phi^h)\deg_{w^h}(f^h).\] On using Proposition \ref{dt2}, the above two equations implies that\[\deg_w(f)=\deg_{w^h}(f^h).\] Further by Proposition \ref{df1},  $\deg (w)=\deg\phi$ and $\deg (w^h)=\deg\phi^h,$ which in view of Lemma \ref{kpl1}, implies that
	\[\frac{\deg f}{\deg \phi}=\frac{\deg f^h}{\deg \phi^h}.\] Hence, we have 
	\[\frac{\deg f}{\deg f^h}=\frac{\deg \phi}{\deg \phi^h}.\]Arguing similarly, for any $g \in \operatorname{KP}(w),$ we obtain	\[\frac{\deg g }{\deg g^h}=\frac{\deg \phi}{\deg \phi^h}=\frac{\deg f}{\deg f^h}.\] This proves the proposition immediately.
\end{proof}
From Remark \ref{r21}, we have that for any irreducible polynomial $f\in K[x]$, $f$ is irreducible over $K^h$ if and only if $K(\alpha)|K$ is unibranched for every $\alpha\in Z(f).$ In particular, if $f\in \operatorname{KP}(w),$ then $f$ is irreducible over $K^h$ if and only if $f=f^h.$ Therefore, we have 
\begin{proposition}\label{bp1}
	Let $w$ be a valuation-transcendental extension of $v$ to $K[x],$ and $f \in \operatorname{KP}(w).$ Then\[f=f^h \iff K(\alpha)|K \ \text{is unibranched, for every } \ \alpha\in Z(f).\]
\end{proposition}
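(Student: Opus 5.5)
The plan is to go through Remark \ref{r21}, using two standard facts about key polynomials. First, every key polynomial $f\in\operatorname{KP}(w)$ is irreducible over $K$. Second, $f^h$ is, by Proposition \ref{df1}, an irreducible factor of $f$ in $K^h[x]$. With these in hand, the statement reduces to the equivalence ``$f$ irreducible over $K^h$ $\iff$ $f=f^h$'', combined with the remark.

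\emph{Step 1: $f$ is irreducible over $K$.} Suppose $f=gh$ with $g,h\in K[x]$ monic and nonconstant. Then $f\sim_w gh$ trivially, so $f\mid_w gh$. By $w$-irreducibility, $f\mid_w g$ or $f\mid_w h$. The $w$-minimality of $f$ then forces $\deg g\geq\deg f$ or $\deg h\geq \deg f$, which is impossible since both degrees are strictly smaller. Hence $f$ is irreducible in $K[x]$, and Remark \ref{r21} applies to $g=f$.

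\emph{Step 2: $f=f^h$ if and only if $f$ is irreducible over $K^h$.} Since $f$ and $f^h$ are both monic (the latter can be normalized monic, being a key polynomial for $w^h$), the following holds. If $f$ is irreducible over $K^h$, its only monic irreducible factor in $K^h[x]$ is $f$ itself, so $f^h=f$. Conversely, if $f=f^h$, then $f$ is irreducible over $K^h$ because $f^h$ is, by Proposition \ref{df1}.

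\emph{Step 3: apply Remark \ref{r21}.} By the remark, the number of extensions of $v$ to $K[x]/(f)\cong K(\alpha)$, for any $\alpha\in Z(f)$, equals the number $s$ of irreducible factors of $f$ in $K^h[x]$. One point needs care here: if $f$ is inseparable, the factorization could have repeated factors. In that case I read $s$ as the number of factors counted as in the remark. Since $\overline{K}|K^{sep}$ is purely inseparable, repeated factors would still correspond to one branch per distinct factor, and I will treat this through the remark as stated. So $s=1$ precisely when $f$ is irreducible over $K^h$. Moreover, all $K(\alpha)$ with $\alpha\in Z(f)$ are $K$-isomorphic to $K[x]/(f)$, so unibranchedness for one root is equivalent to unibranchedness for every root. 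Combining this with Step 2 gives the stated equivalence.

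The main obstacle is the bookkeeping in Step 3: making sure that ``unibranched'' for $K(\alpha)$ matches exactly ``one irreducible factor over $K^h$''. This concerns the inseparable case, where a power of a single irreducible factor could occur. I would handle it by noting that distinct irreducible factors over $K^h$ correspond to distinct $D_{\bar v}$-orbits of roots, i.e.\ to distinct extensions. Irreducibility of $f$ over $K^h$ then means a single orbit, which gives uniqueness of the extension.
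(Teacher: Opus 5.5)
Your proof is correct and follows the paper's route: the paper likewise just combines Remark \ref{r21} (for irreducible $f\in K[x]$, irreducibility over $K^h$ $\iff$ unibranchedness) with the fact that $f^h$ is an irreducible factor of $f$ in $K^h[x]$; your Step 1 additionally proves the irreducibility of $f$ over $K$, which the paper uses without comment. Your worry about repeated factors in the inseparable case does not arise, and the reason is what you left out: $K^h\subseteq K^{sep}$ is separable over $K$, so $K^h[x]/(f)\cong K(\alpha)\otimes_K K^h$ is reduced and the irreducible factors of $f$ over $K^h$ are pairwise distinct, which is exactly what makes a single $\operatorname{Aut}(\overline{K}|K^h)$-orbit on $Z(f)$ (i.e.\ unibranchedness) equivalent to irreducibility of $f$ over $K^h$.
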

The above two propositions immediately yield the following:
\begin{proposition}\label{bp2}
	Let $w$ be a valuation-transcendental extension of $v$ to $K[x],$ and $f , g\in \operatorname{KP}(w).$ Then
	\[K(\alpha)|K \ \text{is unibranched} \iff K(\beta)|K \ \text{is unibranched,} \] for every $\alpha\in Z(f)$ and  $\beta \in Z(g).$
	\end{proposition}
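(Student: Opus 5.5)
The claim follows by chaining Proposition \ref{bp1} with part (ii) of Proposition \ref{bp3}. We interpret the statement as follows: $K(\alpha)|K$ is unibranched for every $\alpha\in Z(f)$ if and only if $K(\beta)|K$ is unibranched for every $\beta\in Z(g)$.

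First, I would apply Proposition \ref{bp1} to $f\in\operatorname{KP}(w)$. It gives that $K(\alpha)|K$ is unibranched for every $\alpha\in Z(f)$ exactly when $f=f^h$, that is, when $f$ is irreducible over $K^h$. Next, part (ii) of Proposition \ref{bp3} gives $f=f^h\iff g=g^h$ for any two key polynomials $f,g$ of $w$. Finally, applying Proposition \ref{bp1} again, now to $g$, turns $g=g^h$ into the statement that $K(\beta)|K$ is unibranched for every $\beta\in Z(g)$. Chaining the three equivalences yields the claim.

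The only point needing care is the quantifier over roots. A key polynomial is irreducible over $K$, since $w$-irreducibility together with $w$-minimality forces irreducibility. Hence all roots of $f$ are $K$-conjugate, and the fields $K(\alpha)$ for $\alpha\in Z(f)$ are $K$-isomorphic. It follows that unibranchedness for one root is equivalent to unibranchedness for every root. This is exactly what Remark \ref{r21} encodes: the number of extensions of $v$ to $K[x]/(f)$ equals the number of irreducible factors of $f$ over $K^h$, independently of which root is chosen.

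With this observation, "for every $\alpha$" and "for every $\beta$" can be read either universally or for a single arbitrary root, and the equivalence holds in both readings. I do not expect any real obstacle; the proof is a direct combination of the two preceding propositions.
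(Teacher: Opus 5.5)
Your proposal is correct and is exactly the paper's argument: the paper obtains the statement immediately by chaining Proposition \ref{bp1} with Proposition \ref{bp3}(ii). Your extra remark is also correct and is left implicit in the paper: key polynomials are irreducible over $K$, so all the fields $K(\alpha)$ are $K$-isomorphic, and unibranchedness therefore does not depend on which root is chosen.
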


	\section{Defect}

	Let $(K,v)$ be a valued field, $L|K$ a finite extension and $v_L$ an extension of $v$ to $L.$ The {\bf Henselian-defect} of the extension $v_L|v$ is defined as \[d(v_L|v)=\frac{[L^h:K^h]}{ef},\] where $e$ and $f$ denote respectively the ramification index and inertia degree of $v_L|v,$  and $L^h, K^h$ are the Henselizations of $L$ and $K$  with respect to the extension of $v_L$ to $\overline{K}.$  If $L|K$ is unibranched, then $v_L$ is the unique extension of $v$ to $L.$ In this case, we denote $d(v_L|v)$ as $d(L|K).$ A finite extension $L|K$ is said to be {\bf defectless} if $d(v_L|v)=1$ for every extension $v_L$ of $v$ to $L.$ A valued field $(K,v)$ is said to be defectless if every finite extension of $K$ is defectless.
	
	In 2007, Vaqui\'e gave a characterization of the defect of a unibranched simple algebraic extension.
	\begin{theorem}(\cite[Corollary 2.10]{V2})\label{vth1}
		Let $v_L$ be the unique extension of $v$ to a simple algebraic extension $L,$ and $w$ be the valuation on $K[x]$ induced by $v_L.$ The defect of the extension $v_L|v$ is the product of the relative gaps of any MLV chain of $w.$
	\end{theorem}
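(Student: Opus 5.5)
The plan is to reduce to the Henselian case, then prove the claim by induction along the MLV chain, comparing the degree growth at each step with the growth of ramification and inertia.

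\emph{Reduction to $K^h$.} Write $L=K(\theta)$ with minimal polynomial $g$, so that $\operatorname{supp}(w)=gK[x]$. Since $L|K$ is unibranched, Remark \ref{r21} shows that $g$ stays irreducible over $K^h$. Hence $L^h=K^h(\theta)$ and $[L^h:K^h]=\deg g$. The valuation $w^h$ on $K^h[x]$ is the one induced by the unique extension of $v^h$ to $L^h$, and $e(v_L|v)$, $f(v_L|v)$ do not change under Henselization. So $d(v_L|v)=\deg g/(e f)$, and everything can be computed for $w^h$ over the Henselian field $K^h$.

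\emph{Transporting the MLV chain.} Next I would check that an MLV chain of $w$ induces an MLV chain of $w^h$ with the same relative gaps. For each valuation-transcendental $w_i$, take $w_i^h$ and replace each key polynomial $\phi_i$ by $\phi_i^h$ (Proposition \ref{df1}). Proposition \ref{bp3} and its proof show that $\deg\phi/\deg\phi^h$ is constant on $\operatorname{KP}(w_i)$. Therefore ratios $\deg(w_{i+1})/\deg(w_i)$ computed inside a single step agree with those for $w^h$. Care is needed because the constant may a priori change with $i$. I would use that $g=g^h$ together with $\phi_i\mid_{w_i} g$ to force all these constants to equal $1$. I expect this to be the main obstacle, since it requires knowing that each $\phi_i$ divides $g$ in the right graded sense.

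\emph{Counting degrees in the Henselian case.} Now assume $K=K^h$. Write
\[\deg g=\deg(w_n)=\prod_{i}\frac{\deg(w_{i+1})}{\deg(w_i)},\]
and split each factor according to the type of the step. For an ordinary step $w_i\to w_{i+1}$, I would show that $\deg(w_{i+1})/\deg(w_i)$ equals $(\Gamma_{w_{i+1}}^\circ:\Gamma_{w_i}^\circ)$ times the degree of the algebraic part of the residue field extension $k_{w_{i+1}}|k_{w_i}$. This comes from the $\phi_i$-expansion describing $\mathcal G_{w_i}$ as a polynomial ring over $\mathcal G_{w_i}^\circ$ in $\mathrm{in}(\phi_i)$ (using $e(w)$ and the algebraicity of homogeneous units). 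For a limit step, the same quantity equals $d_i$ times the ramification and inertia growth of the stable limit. Since $\Gamma_w$ and $k_w$ are the value group and residue field of $v_L$, the product of the ramification and inertia contributions telescopes to $e(v_L|v)f(v_L|v)$. What remains is $\prod_i d_i$, which is the claim.

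The key technical input in the limit case is the unstable-polynomial degree count. I would obtain it from Vaquié's description of the graded algebra of a stable limit, namely that the limit valuation adds no new ramification or inertia beyond the family. This part I would take largely from the existing theory of continuous families.
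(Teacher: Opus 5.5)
The paper does not prove this statement. It is quoted from Vaqui\'e \cite[Corollary 2.10]{V2}, with only the remark that for Henselian $(K,v)$ it is equivalent to Theorem \ref{T2} via Lemmas \ref{nl1} and \ref{nl2}.

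Taken on its own terms, your plan has a genuine gap in the second step, exactly where you expect it. The fact you intend to use, $\phi_i\mid_{w_i} g$, is false in general. Over $\mathbb{Q}_p$, take $g=x^2-p$ and the depth-zero valuation $w_0=w_{0,1/2}$ (so $\phi_0=x$, $\gamma_0=1/2$). Then $w_0\to[w_0;g,\infty]=v_{\sqrt p}$ is an MLV chain, and $\mathrm{in}_{w_0}(g)=\mathrm{in}_{w_0}(x)^2-\mathrm{in}_{w_0}(p)$ with $\mathrm{in}_{w_0}(p)$ a unit. Hence $\mathrm{in}_{w_0}(x)$ does not divide $\mathrm{in}_{w_0}(g)$, even though the field here is Henselian.

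What divides $g$ is the tangent direction $\Phi(w_i,w)$, and even that cannot do the job. By Proposition \ref{df1}, $\mathrm{in}_{w_i^h}(\phi)$ and $\mathrm{in}_{w_i^h}(\phi^h)$ are associates, so no graded divisibility statement can detect whether $\phi=\phi^h$.

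There are two further missing pieces in this step:
\begin{itemize}
\item Proposition \ref{bp3} only compares key polynomials of one valuation. It gives no link between $\deg\phi_i/\deg\phi_i^h$ and $\deg\phi_{i+1}/\deg\phi_{i+1}^h$ across a limit step, where $\phi_{i+1}\notin\operatorname{KP}(w_i)$.
\item You would still have to show that $w_0^h\to\cdots\to w_n^h$ is an MLV chain of $w^h$, with steps of the same type and the same $\deg\Phi$.
\end{itemize}
(The conclusion $\phi_i=\phi_i^h$ is in fact true when $g=g^h$, but it comes from counting degrees or roots, e.g.\ via $j$-invariants, not from divisibility.)

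The detour through $K^h$ is unnecessary, because your third step never uses that $K$ is Henselian.
\begin{itemize}
\item \emph{Ordinary step.} You have $\deg\phi_{i+1}=e(w_i)\deg\psi_{i+1}\deg\phi_i$ and $\kappa_{w_{i+1}}\cong\kappa_{w_i}[y]/(\psi_{i+1})$, where $\psi_{i+1}$ is the residual polynomial of $\phi_{i+1}$. This holds over any valued field. Phrase the inertia factor as $[\kappa_{w_{i+1}}:\kappa_{w_i}]$: there is no natural embedding $k_{w_i}\hookrightarrow k_{w_{i+1}}$, only $\kappa_{w_i}\hookrightarrow\kappa_{w_{i+1}}$.
\item \emph{Limit step.} Any $\chi\in\Phi(w_i,w_{i+1})$ and $\phi_i$ are non-$w_i$-equivalent key polynomials of the same minimal degree. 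Hence $w_i(\phi_i)=w_i(\chi-\phi_i)\in\Gamma^\circ_{w_i}$, i.e.\ $e(w_i)=1$. Together with $\kappa_{w_{i+1}}=\kappa_{w_i}$ (the ``no new inertia'' input you planned to cite), the limit step contributes exactly $d_i$.
\end{itemize}
Multiplying along the chain and using $e(w)=1$, $\Gamma_w=\Gamma_{v_L}$ and $\kappa_w=k_{v_L}$ (nontrivial support) gives $\deg g=e(v_L|v)f(v_L|v)\prod_i d_i$. This holds for \emph{every} valuation with support $gK[x]$.

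Unibranchedness enters only through your first step, $[L^h:K^h]=\deg g$, and then yields $d(v_L|v)=\prod_i d_i$. Without it, $[L^h:K^h]$ is the degree of the relevant irreducible factor of $g$ over $K^h$ (compare Theorem \ref{T1}). The same count then shows that $\prod_i d_i$ overestimates the defect, which is why the hypothesis is needed.

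So drop the second step and run the degree count over $K$ itself. The only thing left to justify or cite is the limit-step equality $\kappa_{w_{i+1}}=\kappa_{w_i}$.
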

	Later in 2023, Nart and Novacoski generalized this characterization to an arbitrary simple algebraic extension (see Theorem \ref{T2}), using the notion of the defect of an augmentation. To state this generalization, we first recall some definitions and results.
	
	\begin{lemma}(\cite{V0} and \cite[Lemma 6.1]{EN3})
	Let $w\longrightarrow w'$ be an augmentation and $\Phi(w,w')$ be the corresponding tangent direction, equipped with the pre-ordering determined by the action of $w'.$ For $Q\in\Phi(w,w') ,$ set $\rho_Q=[w;Q, w'(Q)].$ Let $\phi \in K[x]$ be either a key polynomial of minimal degree for $w',$ or $supp(w')=\phi K[x].$ Then the positive integer \[d=\min\{\deg_{\rho_Q}(\phi) \mid Q\in \Phi(w,w')\}\] is independent of the choice of $\phi.$ Moreover, the set of all $Q\in \Phi(w,w')$ such that $\deg_{\rho_Q}(\phi)=d$ is cofinal in $\Phi(w,w').$
	\end{lemma}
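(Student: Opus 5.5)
The plan is to study how $\deg_{\rho_Q}(\phi)$ varies as $Q$ moves along the tangent direction $\Phi(w,w')$. I will first show that it is non-increasing with respect to the pre-order induced by $w'$. That gives both the existence of the minimum $d$ and the cofinality of the set where it is attained. Independence of the choice of $\phi$ then comes from comparing two admissible $\phi,\phi'$ at $Q$ far enough out in $\Phi(w,w')$.

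\emph{Step 1 (monotonicity).} Take $Q,Q'\in\Phi(w,w')$ with $w'(Q)\le w'(Q')$. All elements of $\Phi(w,w')$ have the common degree $\deg(\Phi(w,w'))$ and are key polynomials for $w$. Hence $\rho_Q\le\rho_{Q'}\le w'$.
\begin{itemize}
\item If $w'(Q)=w'(Q')$, then $Q\sim_{\rho_Q}Q'$, so $\rho_Q=\rho_{Q'}$ and there is nothing to prove.
\item If $w'(Q)<w'(Q')$, then $\rho_{Q'}$ is an ordinary augmentation of $\rho_Q$ with tangent direction $\Phi(\rho_Q,\rho_{Q'})=[Q']_{\rho_Q}$.
\end{itemize}
In the second case I will invoke the standard fact for an augmentation $\rho<\rho'$ with $\deg\Phi(\rho,\rho')=\deg(\rho)$: for every $f$,
\[
\deg_{\rho'}(f)\le \operatorname{ord}_{\text{in}_\rho Q'}(\text{in}_\rho f)\le \deg_\rho(f).
\]
Alternatively, I would argue with $j$-invariants via Proposition \ref{dp1}. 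Here $\deg_\rho(f)=j_\rho(f)/j_\rho(Q)$, the balls defining $\rho_{Q'}$ are contained in those defining $\rho_Q$, and all the $Q$'s share a degree. This yields $\deg_{\rho_{Q'}}(\phi)\le\deg_{\rho_Q}(\phi)$.

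Since these are positive integers, the minimum $d$ is attained at some $Q_0$. By monotonicity every $Q\ge Q_0$ also attains $d$, which proves cofinality.

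\emph{Step 2 (independence of $\phi$).} Let $\phi,\phi'$ both be either minimal degree key polynomials for $w'$ or generators of $\operatorname{supp}(w')$. In either case they are monic of degree $\deg(w')$ and $\phi\sim_{w'}\phi'$; in the support case $\phi=\phi'$. So write $\phi'=\phi+a$ with $\deg a<\deg(w')$ and $w'(a)\ge w'(\phi)$.

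\emph{Step 3 (comparing $\phi$ and $\phi'$ for large $Q$).} I need $\rho_Q(a)=w'(a)$ and $\rho_Q(\phi)<w'(\phi)$ for all sufficiently large $Q$.
\begin{itemize}
\item In the ordinary case $\Phi(w,w')=[\phi]_w$ and $\rho_Q=w'$ for all $Q$, so the statement is trivial.
\item In the limit case, $a$ has degree below $m_\infty=\deg(w')$, so it is stable for the continuous family. Hence $\rho_Q(a)=w'(a)$ for $Q$ large.
\item Also $\phi$ is unstable, being a limit key polynomial, so $\rho_Q(\phi)<w'(\phi)$.
\end{itemize}
Then $\rho_Q(\phi)<\rho_Q(a)$, so $\phi\sim_{\rho_Q}\phi'$. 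Since $\deg_{\rho_Q}$ depends only on the initial form, $\deg_{\rho_Q}(\phi)=\deg_{\rho_Q}(\phi')$ on a cofinal set. By Step 1, the minima of both functions are the eventual values, hence they agree.

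\emph{Main obstacle.} I expect the main difficulty to be Step 1, namely the inequality $\deg_{\rho'}(f)\le\deg_\rho(f)$ for $\rho\le\rho'$ within the tangent direction. The other delicate point is matching the notion of "large $Q$" in the pre-order with stability in the continuous family in the limit case. I would first try the $j$-invariant route, since the ball inclusions are already recorded in the preliminaries.
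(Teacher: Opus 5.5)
The paper gives no proof of this lemma; it is quoted from \cite{V0} and \cite[Lemma 6.1]{EN3}. Your architecture is sound: monotonicity of $Q\mapsto\deg_{\rho_Q}(\phi)$ along $\Phi(w,w')$, then comparing two admissible choices of $\phi$ through $\rho_Q$-equivalence for large $Q$. However, one step is false as written, and positivity is never proved.

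\textbf{The ordinary case.} It is not true that $\rho_Q=w'$ for every $Q\in\Phi(w,w')$. Take $w=w_{0,0}$ the Gauss valuation with $v(p)=1$, $w'=[w;x,2]$ and $Q=x+p\in\Phi(w,w')$. Then $w'(Q)=1$ and $\rho_Q(x)=1<2=w'(x)$. Only the $Q$ with $w'(Q)=w'(\phi)$ give $\rho_Q=w'$. The ordinary case is nevertheless trivial, for a different reason: there $\deg\phi=\deg Q$, so the $Q$-expansion $\phi=Q+(\phi-Q)$ forces $\deg_{\rho_Q}(\phi)\le 1$ for all $Q$ and all admissible $\phi$. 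Together with positivity this gives $d=1$.

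\textbf{Positivity.} This is part of the statement and is missing from your plan, but it is easy. Suppose $\deg_{\rho_Q}(\phi)=0$. Then $\phi\sim_{\rho_Q}\phi_0$, where $\phi_0$ is the $0$-th coefficient of the $Q$-expansion. Since $\rho_Q\le w'$ and $\rho_Q(\phi_0)=w(\phi_0)=w'(\phi_0)$, this yields $\phi\sim_{w'}\phi_0$ with $\deg\phi_0<\deg\phi$. That contradicts the $w'$-minimality of $\phi$, or $w'(\phi)=\infty$ in the support case.

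\textbf{Other inaccuracies, harmless once corrected.}

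In Step 2, two minimal-degree key polynomials of $w'$ need not be $w'$-equivalent; $x$ and $x+p^2$ for the $w'$ above are an example. Only $w'(\phi'-\phi)\ge w'(\phi)$ holds, and that is all you actually use.

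In Step 1, $w'(Q)=w'(Q')$ does not imply $Q\sim_{\rho_Q}Q'$; take $x+p$ and $x+2p$ with $p$ odd. Still, $\rho_Q=\rho_{Q'}$ holds, because $w(Q-Q')=w'(Q-Q')\ge w'(Q)$.

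Drop the $j$-invariant route. The ball inclusion lowers the denominator as well: $j_{\rho_{Q'}}(Q')\le j_{\rho_Q}(Q')=j_{\rho_Q}(Q)$, possibly strictly. For $\alpha^p=p$ over $\mathbb{Q}_p$, the polynomial $x^p-p$ has $j=p$ for $w_{\alpha,\delta}$ with $1/p<\delta\le 1/p+1/(p-1)$, and $j=1$ for larger $\delta$. So nothing follows about the ratio.

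The direct argument suffices. If $w'(Q)<w'(Q')$, then $\rho_Q(Q'-Q)=w'(Q)=\rho_Q(Q)$. Hence $Q'$ is a minimal-degree key polynomial of $\rho_Q$, and $\rho_{Q'}=[\rho_Q;Q',w'(Q')]$. Write $f=\sum f_i(Q')^i$ and $\ell=\min S_{\rho_Q,Q'}(f)$. Every $i>\ell$ satisfies $w(f_i)+i\,w'(Q')>w(f_\ell)+\ell\,w'(Q')$, because $w'(Q')>\rho_Q(Q')$. Therefore $\deg_{\rho_{Q'}}(f)\le\ell\le\deg_{\rho_Q}(f)$.

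Finally, the limit-case matching you flag is routine. Each $\chi_i$ lies in $\Phi(w,w')$ with $w'(\chi_i)=\gamma_i$ and $\rho_{\chi_i}=\rho_i$. Also $w'(Q)\le w'(Q')$ implies $\rho_Q\le\rho_{Q'}$. So $\rho_i\le\rho_Q\le w'$ as soon as $w'(Q)\ge\gamma_i$, which gives $\rho_Q(a)=w'(a)$ for large $Q$.
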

	This stable value $d$ is called the {\bf defect} of the augmentation and is denoted by $d(w\longrightarrow w').$
	
	\begin{lemma}(\cite[Lemma 6.3]{EN3})\label{nl1}
	If $w\longrightarrow w'$ is an ordinary augmentation, then $d(w\longrightarrow w')=1.$
	\end{lemma}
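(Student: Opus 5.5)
The plan is to use the definition of $d(w\longrightarrow w')$ directly, together with the degree information carried by an ordinary augmentation. Write $w'=[w;\phi,\gamma]$ with $\phi\in\operatorname{KP}(w)$ and $\gamma>w(\phi)$. By the remarks after the definition of ordinary augmentation, either $\gamma<\infty$ and $\phi$ is a key polynomial of minimal degree for $w'$, or $\gamma=\infty$ and $\text{supp}(w')=\phi K[x]$. In both cases $\deg(\Phi(w,w'))=\deg(w')=\deg\phi$. Since the preceding lemma says the integer $d$ does not depend on the admissible polynomial used, I would compute $d$ with this particular $\phi$.

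First I check what the elements of the tangent direction look like. Every $Q\in\Phi(w,w')$ is monic of degree $\deg(\Phi(w,w'))=\deg\phi$. In passing, $\phi$ itself lies in $\Phi(w,w')$, because $w(\phi)<\gamma=w'(\phi)$. So fix any $Q\in\Phi(w,w')$. Since $\phi$ and $Q$ are monic of the same degree, the $Q$-expansion of $\phi$ is
\[\phi=(\phi-Q)+1\cdot Q,\qquad \deg(\phi-Q)<\deg Q.\]
It follows that $S_{\rho_Q,Q}(\phi)\subseteq\{0,1\}$, and hence $\deg_{\rho_Q}(\phi)\leq 1$. Here I use that $Q$ is a minimal degree key polynomial for $\rho_Q=[w;Q,w'(Q)]$, so $\deg_{\rho_Q}$ may be computed from $Q$-expansions.

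Finally, the preceding lemma guarantees that $d$ is a positive integer. Since $d$ is the minimum of the quantities $\deg_{\rho_Q}(\phi)$, each of which is at most $1$, we get $d\le 1$, and combined with $d\ge1$ this gives $d=1$.

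If I wanted to avoid relying on positivity, I would verify directly that index $1$ belongs to $S_{\rho_Q,Q}(\phi)$, which amounts to showing $\rho_Q(\phi)=w'(Q)$. The route would be to note that $w(\phi-Q)\ge\min\{w(\phi),w(Q)\}$, while $w'(\phi-Q)=w(\phi-Q)$ because $\deg(\phi-Q)<\deg\phi$ and $w,w'$ agree in smaller degree. Comparing this with $w'(Q)>w(Q)$ should show that the minimum defining $\rho_Q(\phi)$ is attained at the term $Q$.

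The only delicate point is this attainment when $\gamma=\infty$ or when $Q\not\sim_w\phi$. I would sidestep it by citing the positivity already built into the definition of $d$.
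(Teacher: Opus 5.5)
Your proof is correct. The paper gives no argument of its own here (it simply quotes \cite[Lemma 6.3]{EN3}), and your route is the natural one: the $Q$-expansion $\phi=(\phi-Q)+Q$ forces $\deg_{\rho_Q}(\phi)\le 1$ for every $Q\in\Phi(w,w')$, and positivity of $d$ from the preceding lemma then gives $d=1$.

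The point you flagged as delicate is in fact immediate, so you do not need positivity. The $\phi$-expansion $Q=(Q-\phi)+\phi$ gives $\rho_Q(Q)=w'(Q)=\min\{w(Q-\phi),\gamma\}\le w(\phi-Q)=\rho_Q(\phi-Q)$. Hence $1\in S_{\rho_Q,Q}(\phi)$ and $\deg_{\rho_Q}(\phi)=1$ for all $Q$. Your sketched comparison with $w'(Q)>w(Q)$ would not yield this inequality. Also, the case $Q\not\sim_w\phi$ cannot occur, because every element of $\Phi(w,w')$ is $w$-equivalent to $\phi$.
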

	\begin{lemma}(\cite[Lemma 6.4]{EN3})\label{nl2}
	Suppose $(K,v)$ is Henselian. For all augmentations $w\longrightarrow w'$ we have
	\[d(w\longrightarrow w')=\deg(w')/ \deg ({\Phi(w,w')}).\]
	\end{lemma}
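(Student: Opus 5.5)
The idea is to compute $\deg_{\rho_Q}(\phi)$ for every $Q\in\Phi(w,w')$, not only for a cofinal set, and to show that it always equals $\deg\phi/\deg Q$. The tool is the $j$-invariant relative to $\rho_Q$, via Proposition \ref{dp1}. Since $\deg(w')=\deg\phi$ and $\deg\Phi(w,w')=\deg Q$ for every $Q\in\Phi(w,w')$, this gives the claimed value of $d(w\longrightarrow w')$ at once.

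\emph{Step 1: $Q$ is a minimal degree key polynomial for $\rho_Q$.} Fix $Q\in\Phi(w,w')$. We have $Q\in\operatorname{KP}(w)$, and $\rho_Q=[w;Q,w'(Q)]$ is an ordinary augmentation. Since $\gamma=w'(Q)<\infty$ in the situation where $\rho_Q$ is valuation-transcendental, $Q$ is a key polynomial of minimal degree for $\rho_Q$. Moreover, $\rho_Q\le w'$. This is the standard fact that $w'$ dominates the augmentation of $w$ along a tangent direction polynomial with value $w'(Q)$: one checks it on $Q$-expansions, using $w(a)=w'(a)$ for $\deg a<\deg Q$.

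\emph{Step 2: apply Proposition \ref{dp1} to $\rho_Q$.} This gives
\[j_{\rho_Q}(\phi)=j_{\rho_Q}(Q)\,\deg_{\rho_Q}(\phi).\]
Because $K$ is Henselian, $K=K^h$. Hence every key polynomial equals its Henselian factor, i.e. $Q^h=Q$. Also, there is a unique common extension $\overline{\rho}_Q$ of $\rho_Q$ and $\bar v$, so $r=1$ in Theorem \ref{p23}. Therefore $j_{\rho_Q}(Q)=\deg Q$. Concretely, $\overline{\rho}_Q=\overline{w}_{\alpha,\delta_Q}$ for a root $\alpha$ of $Q$, and the ball $B(\alpha,\delta_Q)$ contains all roots of $Q$.

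\emph{Step 3: show $j_{\rho_Q}(\phi)=\deg\phi$.} This is the main point: we must show that the ball $B(\alpha,\delta_Q)$ contains all roots of $\phi$.
\begin{enumerate}[(a)]
\item \emph{The ball contains one root $\beta$ of $\phi$.} Write the common extension of $w'$ as $\overline{w}'=\overline{w}_{\beta,\delta'}$, with $\beta$ a root of $\phi$. By Theorem \ref{t2} if $w'$ is valuation-transcendental, or by Remark \ref{r21} with $\delta'=\infty$ if $\operatorname{supp}(w')=\phi K[x]$, this is always possible. Since $\rho_Q\le w'$ and both common extensions are unique, we get $\overline{\rho}_Q\le\overline{w}'$. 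The ball-inclusion criterion then gives $\beta\in B(\beta,\delta')\subseteq B(\alpha,\delta_Q)$.
\item \emph{The ball is stable under $\operatorname{Aut}(\overline K|K)$.} Since $K=K^h$, every $\sigma$ satisfies $\bar v\circ\sigma=\bar v$. Hence $\sigma$ maps $B(\alpha,\delta_Q)$ onto $B(\sigma\alpha,\delta_Q)$. Now $\sigma\alpha$ is again a root of $Q$, so $\bar v(\sigma\alpha-\alpha)\ge\delta_Q$ by Step 2. Thus $B(\sigma\alpha,\delta_Q)=B(\alpha,\delta_Q)$.
\item \emph{Conclusion.} Since $\phi$ is irreducible, its roots are the $\sigma\beta$. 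By (a) and (b), they all lie in $B(\alpha,\delta_Q)$, counted with multiplicity, so $j_{\rho_Q}(\phi)=\deg\phi$.
\end{enumerate}
The delicate point is the irreducibility of $\phi$. If $\phi$ is a minimal degree key polynomial of $w'$, it equals $\phi^h$ and is irreducible over $K^h=K$. If $\phi$ generates the prime support, it is irreducible by definition.

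\emph{Step 4: conclude.} Combining Steps 2 and 3 gives $\deg_{\rho_Q}(\phi)=\deg\phi/\deg Q$ for every $Q\in\Phi(w,w')$. Hence the minimum $d$ equals $\deg(w')/\deg\Phi(w,w')$.

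As a consistency check, Lemma \ref{kpl1} shows this is the same as saying that $\phi$ is $\rho_Q$-minimal for every $Q$. For an ordinary augmentation, where $\deg\phi=\deg Q$, the formula agrees with Lemma \ref{nl1}.
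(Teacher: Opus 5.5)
The paper gives no proof of this lemma; it is quoted from \cite[Lemma 6.4]{EN3}, so your argument has to stand on its own. Your strategy is viable: compute $\deg_{\rho_Q}(\phi)=j_{\rho_Q}(\phi)/j_{\rho_Q}(Q)$ through Proposition \ref{dp1}, and use Henselianity only via the irreducibility of $Q$ and $\phi$ over $K=K^h$. Step 1 is fine.

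\textbf{The gap is in Step 2.} Henselianity of $K$ does \emph{not} give a unique common extension of $\rho_Q$ and $\bar v$. Uniqueness holds for the extension to $K^h[x]$ (Theorem A of \cite{EN5}), not to $\overline K[x]$. The roots in Theorem \ref{t21} are optimizing with respect to \emph{some} common extension, not a fixed one, and Theorem \ref{p23} gives $r=\deg Q/j(Q)$, which can exceed $1$.

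Here is a counterexample inside the lemma's own setting. Take $K=\mathbb{Q}_p$ with $p$ odd, $w=w_{0,1/2}$, and $Q=\phi=x^2-p\in\operatorname{KP}(w)$. Let $w'=\rho_Q=[w;Q,\gamma]$ with $\gamma>1$.
\begin{enumerate}[(i)]
\item Then $w'=w_{\sqrt p,\delta}$ with $\delta=\gamma-\frac12>\frac12$.
\item But $\bar v(\sqrt p-(-\sqrt p))=\frac12<\delta$, so $\overline w_{\sqrt p,\delta}\neq\overline w_{-\sqrt p,\delta}$ are two distinct common extensions.
\item Hence $j_{\rho_Q}(Q)=1\neq 2=\deg Q$, and $B(\sqrt p,\delta)$ does not contain all roots of $Q$.
\end{enumerate}
Since $\phi=Q$ here, the conclusion $j_{\rho_Q}(\phi)=\deg\phi$ of Step 3 also fails. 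Steps 3(a) and 3(b) rest on the same false uniqueness.

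\textbf{What is true, and suffices, is a proportional version.} Let $B_1,\dots,B_r$ be the pairwise disjoint balls $B(\alpha_i,\delta_Q)$ of the $r$ common extensions of $\rho_Q$.
\begin{enumerate}[(i)]
\item Every $\sigma\in\operatorname{Aut}(\overline K|K)$ satisfies $\bar v\circ\sigma=\bar v$, and $\rho_Q$ is defined over $K$. Hence $\sigma$ sends common extensions to common extensions and permutes the $B_i$.
\item Each $B_i$ contains exactly $j_{\rho_Q}(\phi)$ roots of $\phi$, since $j$ is independent of the common extension.
\item Once one root of $\phi$ lies in some $B_i$, transitivity on the roots of the irreducible $\phi$ gives $\deg\phi=r\,j_{\rho_Q}(\phi)$.
\item Together with $\deg Q=r\,j_{\rho_Q}(Q)$, this yields $\deg_{\rho_Q}(\phi)=\deg\phi/\deg Q$.
\end{enumerate}

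To get that first root, do not compare common extensions. Show instead that $\deg_{\rho_Q}(\phi)\ge 1$. Suppose not, and let $a_0$ be the constant term of the $Q$-expansion of $\phi$.
\begin{enumerate}[(i)]
\item Then $\rho_Q(\phi-a_0)>\rho_Q(a_0)=w(a_0)=w'(a_0)$.
\item Since $\rho_Q\le w'$, this gives $w'(\phi-a_0)>w'(a_0)$, i.e. $\phi\sim_{w'}a_0$ with $\deg a_0<\deg Q\le\deg\phi$.
\item This contradicts either $\phi\in\operatorname{supp}(w')$ or the $w'$-minimality of $\phi$.
\end{enumerate}
Proposition \ref{dp1} then gives $j_{\rho_Q}(\phi)\ge j_{\rho_Q}(Q)\ge 1$.

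One edge case remains: $w'(Q)=\infty$, i.e. $Q=\phi$ in an ordinary augmentation with $\operatorname{supp}(w')=\phi K[x]$. Proposition \ref{dp1} does not apply there, so treat it separately via Lemma \ref{nl1}.
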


	Let $w$ be either a valuation-transcendental extension or a valuation with nontrivial support. By Remark \ref{mlvtl}, $w$ has a finite MLV chain (say)
	\begin{align}\label{ed1}
			(v \longrightarrow) w_0  \longrightarrow w_1  \longrightarrow \cdots  \longrightarrow w_{n-1}  \longrightarrow w_n=w .
			\end{align}
			We now recall the definitions of defect, inertia degree, and ramification index for $w|v,$ given in \cite{EN3}.
			The defect of $w|v$ is defined as
			\[d(w|v):=d(w_0\longrightarrow w_1) \cdots d(w_{n-1}\longrightarrow w).\]
		
	 By considering the canonical homomorphisms of graded algebras $\mathcal{G}_{w_i} \longrightarrow \mathcal{G}_{w_{i+1}},$ for every $i,\ 0\leq i\leq n-1,$ the above chain induces a tower of finite field extensions 
	\[k_v=\kappa_{w_0}\longrightarrow\kappa_{w_1}\longrightarrow\cdots\longrightarrow \kappa_{w_{n-1}} \longrightarrow \kappa_{w_n}=\kappa_w,\]
		where $\kappa_{w_i}$ is the relative algebraic closure of $k_v$ in the grade-zero component $\Delta_{w_i}$ of the graded algebra $\mathcal{G}_{w_i}.$ The field $\kappa_{w_i}$ also satisfies $\kappa_{w_i}^\times=\Delta_{w_i}^\times,$ where $\Delta_{w_i}^\times$ is the multiplicative group of all units in $\Delta_{w_i}.$
	
	Hence, we have
	\begin{align*}
		[\kappa_w:k_v]=[\kappa_{w_1}:k_v]\cdots [\kappa_w:\kappa_{w_{n-1}}].
	\end{align*}
	Also, the value groups of the valuations in (\ref{ed1}), form a chain of subgroups
	\[\Gamma_{w_{-1}}:=\Gamma_v \subset \Gamma_{w_0} \subset \cdots \subset \Gamma_{w_{n-1}}\subset \Gamma_{w_n}=\Gamma_w, \]
	such that $\Gamma_{w_{i-1}}=\Gamma^\circ_{w_i}$ for all $i, \ 0\leq i \leq n.$ Hence,
	\[(\Gamma_w:\Gamma_v)=e(w_0)\cdots e(w_n),\] where $e(w_i)=(\Gamma_{w_i}:\Gamma^\circ_{w_i}).$ We call $[\kappa_w:k_v], (\Gamma_w:\Gamma_v)$ the inertia degree and  ramification index of $w|v,$ respectively, which are connected to defect of $w|v$ as follows.

	\begin{theorem}(\cite[Theorem 6.13]{EN3})\label{T1}
		Suppose that $w$ is either a valuation-transcendental extension or has nontrivial support. Then, 
		\[(\Gamma_w:\Gamma_v)[\kappa_w:k_v]d(w|v)=e(w)\deg(w^h).\] 
	\end{theorem}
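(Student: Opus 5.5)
The plan is to reduce to the Henselian case by passing from $w$ to its unique common extension $w^h$ on $K^h[x]$, and then to telescope a degree formula along the MLV chain (\ref{ed1}).

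\emph{Step 1 (transfer of invariants).} I will use the fact from \cite{EN5} that the canonical map $\mathcal{G}_w\to\mathcal{G}_{w^h}$ is an isomorphism of graded algebras. This holds in both situations covered by the statement: $w$ valuation-transcendental, or $w$ with nontrivial support. From the isomorphism I get $\Gamma_{w}=\Gamma_{w^h}$, $\Gamma_w^\circ=\Gamma_{w^h}^\circ$ (so $e(w)=e(w^h)$), and $\kappa_w=\kappa_{w^h}$. Applying the same fact to every valuation $w_i$ of the chain gives the same equalities for each $w_i$.

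I also expect the MLV chain of $w$ to induce an MLV chain of $w^h$,
\[
w_0^h\longrightarrow w_1^h\longrightarrow\cdots\longrightarrow w_n^h=w^h,
\]
with the following properties. The key polynomials are replaced by their factors $\phi_i^h$ from Proposition \ref{df1}. The step $w_i^h\to w_{i+1}^h$ is ordinary or limit exactly when $w_i\to w_{i+1}$ is. Finally, for each $Q\in\Phi(w_i,w_{i+1})$ the valuation $\rho_Q^h$ equals $[w_i^h;Q^h,w_{i+1}(Q)]$. Combining this with $\deg_{\rho_Q}(\phi)=\deg_{\rho_Q^h}(\phi^h)$ from Proposition \ref{df1}, I obtain $d(w_i\to w_{i+1})=d(w_i^h\to w_{i+1}^h)$, and hence $d(w|v)=d(w^h|v^h)$.

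\emph{Step 2 (Henselian case).} Over $K^h$, Lemma \ref{nl2} gives $d(w_i^h\to w_{i+1}^h)=\deg(w_{i+1}^h)/\deg(\Phi(w_i^h,w_{i+1}^h))$, which is the relative gap $d_i$. I then need the standard degree formula for a step of an MLV chain:
\[
\deg(\Phi(w_i^h,w_{i+1}^h))=e(w_i^h)\,[\kappa_{w_{i+1}^h}:\kappa_{w_i^h}]\,\deg(w_i^h).
\]
This follows because a minimal-degree key polynomial of $w_{i+1}^h$ (or of the tangent direction) has $\phi_i^h$-expansion of $w_i^h$-degree equal to $e(w_i^h)$ times the degree of its residual polynomial. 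That residual degree is $[\kappa_{w_{i+1}}:\kappa_{w_i}]$, using $\Gamma_{w_i}=\Gamma^\circ_{w_{i+1}}$. Lemma \ref{kpl1} then converts $\deg_{w_i^h}$ into ordinary degree. Multiplying the two identities gives
\[
\deg(w_{i+1}^h)=d_i\,e(w_i^h)\,[\kappa_{w_{i+1}}:\kappa_{w_i}]\,\deg(w_i^h).
\]

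\emph{Step 3 (telescoping).} Telescoping from $\deg(w_0^h)=1$ gives
\[
\deg(w^h)=\prod_{i<n}e(w_i)\,[\kappa_w:k_v]\,d(w^h|v^h).
\]
Multiplying by $e(w)=e(w_n)$ and using $(\Gamma_w:\Gamma_v)=\prod_{i\le n}e(w_i)$ together with Step 1 yields $e(w)\deg(w^h)=(\Gamma_w:\Gamma_v)[\kappa_w:k_v]d(w|v)$. In the nontrivial-support case $e(w_n)=1$, and the last step is read with $\deg(w)=\deg\phi$.

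\emph{Main obstacle.} I expect the hardest part to be Step 1's compatibility of the defect of each augmentation, in particular for limit augmentations. There one must show that the tangent direction $\Phi(w_i,w_{i+1})$ corresponds to $\Phi(w_i^h,w_{i+1}^h)$ under $Q\mapsto Q^h$, compatibly with the preordering. My first attempt will be to use Theorem \ref{t21}: optimizing roots of $Q$ are exactly the roots of $Q^h$, so the cofinality conditions can be checked with $\overline{w}$-values of $x-\alpha$ for $\alpha\in Z(Q^h)$.
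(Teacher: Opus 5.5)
The paper does not prove this statement; it quotes it from Nart--Novacoski. So your argument has to stand on its own. Its overall shape is right: a per-step degree identity telescoped along the MLV chain, starting from $\deg(w_0^h)=1$ and using $(\Gamma_w:\Gamma_v)=\prod_i e(w_i)$. However, Step~1 rests on a false claim. The MLV chain of $w$ does \emph{not} in general induce an MLV chain of $w^h$ with the same ordinary/limit pattern, and the depth can even drop.

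Take $K=\mathbb{Q}$, $v=v_7$, and let $w$ be the valuation with support $(x^2-2)$ induced by the root $\theta\in\mathbb{Q}_7$ with $\theta\equiv 3 \bmod 7$. Since $v_7(\theta-a)$, $a\in\mathbb{Q}$, is unbounded but never infinite, the MLV chain of $w$ is $w_0=[v;x-3,1]\to w$ with a \emph{limit} step. This step is built on the family $\rho_j=[w_0;x-a_j,j]$ with $v_7(\theta-a_j)=j$. Its relative gap is $2$, whereas $d(w_0\to w)=1$ because $\deg_{\rho_j}(x^2-2)=1$. Over $K^h$, however, $w^h$ has support $(x-\theta)$ and depth $0$. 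The induced step $w_0^h\to w^h=[w_0^h;x-\theta,\infty]$ is ordinary with no increase of degree, so it is not an MLV step at all.

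Hence the step types are not preserved. If your $d_i$ is the relative gap of the original chain, Steps 1--2 would give $d(w|v)=2$ here. If it is the gap over $K^h$, you are quoting the MLV-step degree formula for a step that is not an MLV step.

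The same example defeats your plan for the main obstacle: the images $Q^h=x-a_j$ are not cofinal in $\Phi(w_0^h,w^h)$, since $x-\theta$ lies above all of them. Finally, Proposition~\ref{df1} gives $\deg_{\rho_Q}(f)=\deg_{\rho_Q^h}(f^h)$ only for $f\in\operatorname{KP}(\rho_Q)$. But $\phi$ is typically not even $\rho_Q$-minimal: above, $\deg_{\rho_j}(x^2-2)\deg(x-a_j)=1<2$, so the criterion of Lemma~\ref{kpl1} fails. That failure is exactly what the defect measures, so it cannot be imported from Proposition~\ref{df1}.

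What is true, and is all that Step~3 needs, is
\[
\deg(w_{i+1}^h)=d(w_i\to w_{i+1})\,e(w_i)\,[\kappa_{w_{i+1}}:\kappa_{w_i}]\,\deg(w_i^h).
\]
It should be proved on the honest MLV chain over $K$, without transporting it. For $Q\in\Phi(w_i,w_{i+1})$, the degree formula for MLV steps over $K$ gives $\deg Q=e(w_i)[\kappa_{w_{i+1}}:\kappa_{w_i}]\deg(w_i)$. For a limit step both factors are $1$; for instance $e(w_i)=1$ because $\phi_i\notin\Phi(w_i,w_{i+1})$ forces $w_i(\phi_i)\in\Gamma^\circ_{w_i}$. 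Since $Q,\phi_i\in\operatorname{KP}(w_i)$, the ratio $\deg f/\deg f^h$ is constant on $\operatorname{KP}(w_i)$ (this is established in the proof of Proposition~\ref{bp3}). That converts the formula into $\deg Q^h=e(w_i)[\kappa_{w_{i+1}}:\kappa_{w_i}]\deg(w_i^h)$.

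The real content is then $\deg_{\rho_Q}(\phi_{i+1})=\deg\phi_{i+1}^h/\deg Q^h$ for all large $Q$. By Proposition~\ref{dp1} and Theorem~\ref{p23}, this is a root count in the balls of the common extensions of $\rho_Q$. The roots of $\phi_{i+1}^h$ are $\operatorname{Aut}(\overline{K}|K^h)$-conjugate, hence spread evenly over the $\deg Q^h/j(Q^h)$ balls. You must then show that no root $\beta$ of $\phi_{i+1}/\phi_{i+1}^h$ lies, for all $Q$, in the $\rho_Q$-ball around an optimizing root $\alpha$ of $\phi_{i+1}$.

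For an ordinary step take $Q=\phi_{i+1}$. For a limit step, such a $\beta$ would make $w_{\alpha,\bar v(\alpha-\beta)}$ a valuation lying above the whole continuous family. No element of degree $<\deg\phi_{i+1}$ lies in its ball, while $\beta$ does, so $\phi_{i+1}$ is a minimal-degree key polynomial for it, with both $\alpha$ and $\beta$ as optimizing roots. By Theorem~\ref{t21}, $\beta$ would then be a root of $\phi_{i+1}^h$, a contradiction.

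With this identity your Step~3 goes through verbatim, with $d(w^h|v^h)$ replaced by $d(w|v)$. Neither the transported chain nor the graded-algebra comparison of Step~1 is needed.
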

		The following result relates the defect of an extension of $v$ to the simple finite extension of $K$ with the defect of its induced valuation on $K[x].$
	\begin{theorem}(\cite[Theorem 6.14]{EN3})\label{T2}
		Let $v_L$ be an extension of $v$ to a finite simple extension $L|K.$ Let $w$ be the valuation on $K[x]$ induced by $v_L.$ For any MLV chain of $w=w_n$ (\ref{ed1}), we have\[d(v_L|v)=d(w|v)=d(w_0\longrightarrow w_1)\cdots d(w_{n-1}\longrightarrow w_n).\]
	\end{theorem}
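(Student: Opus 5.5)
The approach is to derive the statement from Theorem \ref{T1} by passing to the Henselization. The second equality in the statement is just the definition of $d(w|v)$ via the MLV chain (\ref{ed1}). So the content is the equality $d(v_L|v)=d(w|v)$, and for that it suffices to show that each factor in Theorem \ref{T1} matches the corresponding invariant of $v_L|v$.

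\emph{Setup.} Write $L=K(\theta)$ and let $g$ be the minimal polynomial of $\theta$ over $K$. Then $w$ has nontrivial support $gK[x]$. By Remark \ref{r21}, $v_L=\bar{v}\circ\lambda_{\theta}$ for some root $\theta$ of an irreducible factor $G$ of $g$ over $K^h$, and $w=v_G$. By Remark \ref{mlvtl}, $w$ has a finite MLV chain, so Theorem \ref{T1} applies:
\[(\Gamma_w:\Gamma_v)[\kappa_w:k_v]\,d(w|v)=e(w)\deg(w^h).\]
I then identify the four quantities $e(w)$, $\deg(w^h)$, $\Gamma_w$ and $\kappa_w$ in turn.

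\emph{Step 1: $e(w)=1$.} Since $\phi=g$ generates the support, every nonzero class in $K[x]/(g)$ is represented by a polynomial $a$ with $\deg a<\deg g$. Hence $\Gamma_w$ is generated by the values $w(a)$ with $\deg a<\deg g$. By the nontrivial-support analogue of \cite[Lemma 2.11]{EN4}, these are exactly the grades of homogeneous units of $\mathcal{G}_w$: every nonzero homogeneous element of the graded algebra of the field $K[x]/(g)$ is invertible. So $\Gamma_w=\Gamma_w^\circ$.

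\emph{Step 2: $\deg(w^h)=[L^h:K^h]$.} The unique common extension $w^h$ of $w$ and $v^h$ to $K^h[x]$ has support $GK^h[x]$, because it is induced by $\bar v\circ\lambda_\theta$ on $K^h[x]$. Hence $\deg(w^h)=\deg G=[K^h(\theta):K^h]=[L^h:K^h]$, using that $L^h=K^h(\theta)$ is the Henselization of $L$ with respect to the extension of $v_L$ to $\overline{K}$.

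\emph{Step 3: matching value group and residue field.} We have $\Gamma_w=v_L(L^\times)=\Gamma_{v_L}$, so $(\Gamma_w:\Gamma_v)=e(v_L|v)$. For the residue part, I claim $\kappa_w=k_{v_L}$. Since the support is a maximal ideal, the graded algebra $\mathcal{G}_w$ is the graded algebra of the valued field $(L,v_L)$. Its degree-zero component $\Delta_w$ is therefore the residue field $k_{v_L}$, which is algebraic over $k_v$, so its relative algebraic closure $\kappa_w$ is $k_{v_L}$ itself. Thus $[\kappa_w:k_v]=f(v_L|v)$. Substituting Steps 1–3 into Theorem \ref{T1} gives
\[d(w|v)=\frac{[L^h:K^h]}{e(v_L|v)f(v_L|v)}=d(v_L|v).\]

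The main obstacle is Step 3, the residue-field identification. One has to make sure that the field $\kappa_w$ produced in \cite{EN3} by the tower $\kappa_{w_0}\to\cdots\to\kappa_{w_n}$ really coincides with $\Delta_w$ when the last valuation has nontrivial support. That is, one must check that the canonical maps $\mathcal{G}_{w_i}\to\mathcal{G}_{w_{i+1}}$ exhaust all of $\Delta_w^\times$ in the final step. I would settle this by showing directly that every $\text{in}_w(a)$ with $w(a)=0$ and $\deg a<\deg g$ is a homogeneous unit algebraic over $k_v$, using the same argument as in Step 1.
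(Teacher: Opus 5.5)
Your argument is correct. The paper gives no proof of this result and only quotes it from \cite{EN3}; your derivation (specializing Theorem \ref{T1} to a valuation with nontrivial support via $e(w)=1$, $\deg(w^h)=\deg G=[L^h:K^h]$, $\Gamma_w=\Gamma_{v_L}$ and $\kappa_w=\Delta_w=k_{v_L}$) uses the same identifications the paper makes for $v_\phi$ in the proof of Theorem \ref{odt2}, namely $\mathcal{G}_{v_\phi}=\mathcal{G}^\circ_{v_\phi}$, $\kappa_{v_\phi}=\Delta_{v_\phi}=k_{v_\phi}$ and $\deg(v_\phi^h)=\deg\phi^h$.

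The obstacle you flag at the end does not arise. $\kappa_w$ is defined intrinsically as the relative algebraic closure of $k_v$ in $\Delta_w$, and the tower $\kappa_{w_0}\to\cdots\to\kappa_{w_n}$ only factors $[\kappa_w:k_v]$ by the tower law. So your identification $\Delta_w\cong k_{v_L}$ in Step 3 already completes the proof.
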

	Note that using Lemmas \ref{nl1} and \ref{nl2}, it follows that Theorems \ref{vth1} and \ref{T2} are equivalent when $(K,v)$ is Henselian.
	
	A valued field $(K,v)$ is called {\bf simply defectless} if all simple algebraic extensions of $K$ are defectless.\\
	We now give a characterization of a simply defectless field in terms of the MLV chains.
	\begin{theorem}
		A valued field $(K,v)$ is simply defectless if the MLV chain of every extension of $v$ to $K[x]$ with nontrivial support consists of only ordinary augmentations.
	\end{theorem}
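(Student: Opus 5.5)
The plan is to apply Theorem \ref{T2} to an arbitrary simple algebraic extension and then use Lemma \ref{nl1}.

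Let $L|K$ be a finite simple extension, say $L=K(\theta)$, and let $g$ be the minimal polynomial of $\theta$ over $K$. By the definition of defectless, we must show $d(v_L|v)=1$ for every extension $v_L$ of $v$ to $L$.

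First, fix such an extension $v_L$. By Remark \ref{r21}, $v_L$ corresponds to an irreducible factor $G$ of $g$ over $K^h$. It therefore induces a valuation $w=v_G$ on $K[x]$ with nontrivial support $gK[x]$, and this valuation extends $v$.

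Second, by Remark \ref{mlvtl}, the valuation $w$ has a finite MLV chain
\[(v\longrightarrow)\, w_0\longrightarrow w_1\longrightarrow\cdots\longrightarrow w_n=w.\]
Theorem \ref{T2} applies to this chain and gives
\[d(v_L|v)=d(w|v)=\prod_{i=0}^{n-1} d(w_i\longrightarrow w_{i+1}).\]

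Third, by hypothesis every augmentation step in this chain is ordinary. Lemma \ref{nl1} then gives $d(w_i\longrightarrow w_{i+1})=1$ for each $i$, so the product equals $1$ and $d(v_L|v)=1$. Since $v_L$ was arbitrary, $L|K$ is defectless, and since $L$ was an arbitrary simple algebraic extension, $(K,v)$ is simply defectless.

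The only point needing care is the quantifier in the hypothesis. The MLV chain of $w$ is not unique, but the nature of each augmentation step is independent of the choice of chain (\cite[Corollary 4.4, Theorem 4.7]{EN1}), so the hypothesis applies to whichever chain we fix. Theorem \ref{T2} holds for any MLV chain of $w$, so the argument does not depend on this choice.

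There is no serious obstacle, since the argument only chains together the cited results. If the paper intends an equivalence, the converse is also within reach. If $L|K$ is defectless, each factor $d(w_i\longrightarrow w_{i+1})$ is a positive integer and the product equals $1$, so every factor equals $1$. Deducing that every step is ordinary would then need the Henselian formula of Lemma \ref{nl2}, which is not available over an arbitrary $K$. So I would prove only the stated direction.
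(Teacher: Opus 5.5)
Your proof is correct and matches the paper's argument: for each extension of $v$ to $K(\theta)$ you pass to the induced valuation on $K[x]$ with support $gK[x]$, apply Theorem~\ref{T2} to its MLV chain, and use Lemma~\ref{nl1} to get that every augmentation defect equals $1$. Your remarks on the choice of MLV chain, and on why a converse needs the Henselian hypothesis (the paper treats that separately in Theorem~\ref{depth}), are sound but not needed for the stated direction.
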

	\begin{proof}
	Let $\theta \in \overline{K}$ be arbitrary and $v_1, v_2,\ldots, v_s$ be the distinct extensions of $v$ to $K(\theta)\cong K[x]/(g),$ where $g$ is the minimal polynomial of $\theta$ over $K.$ For an arbitrary but fixed $i, \ 1\leq i\leq s,$ let $\bar{v}_i$ be the extension of $v_i$ to the algebraic closure $\overline{K}$ of $K.$ 
	The valuation $v_i$ on $K(\theta)$ determines a valuation $\mu_i$ on $K[x]$\[\mu_i:K[x]\twoheadrightarrow K[x]/(g)\cong K(\theta)\xrightarrow{v_i} \Gamma_{\bar{v}_i}\cup\{ \infty \}, 
	\] having support $gK[x].$ Let the MLV chain of $\mu_i$ be 
	\[w_0\longrightarrow w_1 \longrightarrow \cdots \longrightarrow w_{n-1} \longrightarrow w_n=\mu_i.\] By hypothesis, we have that each augmentation step in the above chain is ordinary. Therefore using Theorem \ref{T2} and Lemma \ref{nl1}, we obtain that	\[
	d(v_i|v) =d(\mu_i|v)=d(w_0\longrightarrow w_1)\cdots d(w_{n-1}\longrightarrow w_n) =1 .\]
	As $i$ was arbitrary, we have $d(v_i|v)=1$ for every $i,1\leq i\leq s.$ Hence, $K(\theta)|K$ is defectless. Since $\theta$ was arbitrary, we have that $(K,v)$ is simply defectless.		
	\end{proof}
	
	The converse of the above theorem also holds for Henselian valued fields $(K,v).$ In fact, we have the following result for every extension of $v$ to $K[x].$
	\begin{theorem}\label{depth}
		For a simply defectless Henselian valued field $(K,v),$ the MLV chain of every extension of $v$ to $K[x]$ consists of only ordinary augmentations.
	\end{theorem}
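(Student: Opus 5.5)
We argue by contradiction: assume the MLV chain of some extension $w$ of $v$ to $K[x]$ contains a limit augmentation $w_i\longrightarrow w_{i+1}$, and produce a simple algebraic extension of $K$ with nontrivial defect. Since $(K,v)$ is Henselian, Lemma \ref{nl2} gives
\[
d(w_i\longrightarrow w_{i+1})=\deg(w_{i+1})/\deg(\Phi(w_i,w_{i+1})).
\]
For a limit step this quotient is the relative gap $d_i=\deg(w_{i+1})/\deg(w_i)>1$.

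The first task is to move from $w$, which may be valuation-transcendental, of quasi-finite or infinite depth, to a valuation with nontrivial support that keeps the offending step. Truncating the chain at $w_{i+1}$ makes $w_{i+1}$ either valuation-transcendental or of nontrivial support (Remark \ref{mlvtl}). Let $\phi=\phi_{i+1}$ be its minimal degree key polynomial, or the generator of its support, and set
\[
\mu=[w_{i+1};\phi,\infty]
\]
when $\gamma_{i+1}<\infty$, and $\mu=w_{i+1}$ otherwise. This $\mu$ has support $\phi K[x]$, and its MLV chain is
\[
w_0\longrightarrow\cdots\longrightarrow w_i\longrightarrow\mu.
\]
Replacing $\gamma_{i+1}$ by $\infty$ keeps the step a limit augmentation of the same essential continuous family with the same MLV limit key polynomial $\phi$. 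Moreover $\Phi(w_i,\mu)=\Phi(w_i,w_{i+1})$, because the tangent direction is determined by the family, whose stable degree is $\deg(w_i)$. So the chain conditions still hold, and the limit step keeps relative gap $\deg\phi/\deg(w_i)>1$. Checking that this modified chain really is an MLV chain of $\mu$, in particular that nothing degenerates when $\gamma$ is set to $\infty$, is the step I expect to be the main technical point. I would handle it by appealing to the uniqueness and invariance of the degrees and relative gaps cited after Theorem \ref{mlvt1}.

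Since $\mu$ has nontrivial support, $\phi$ is irreducible over $K$, and $K$ is Henselian, so $L=K[x]/(\phi)=K(\theta)$ is unibranched with unique valuation $v_L$ inducing $\mu$. By Theorem \ref{T2}, or Theorem \ref{vth1},
\[
d(L|K)=\prod_j d(w_j\longrightarrow w_{j+1})\geq \frac{\deg\phi}{\deg(w_i)}>1.
\]
Each factor is at least $1$ by Lemma \ref{nl2}, and the last one is strictly larger. Thus $K(\theta)|K$ is a simple algebraic extension with nontrivial defect, contradicting that $(K,v)$ is simply defectless. Hence every augmentation step in the MLV chain of every extension $w$ is ordinary.
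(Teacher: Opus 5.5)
Your proposal is correct and is essentially the paper's proof. The paper also replaces the valuation at the relevant step by $[w_i;\phi_i,\infty]$ (its $v_{\phi_i}$). It cites \cite[Lemma 4.2]{EN1} to get that the MLV chain of this valuation is the truncated chain with the last step of the same type, which is exactly the point you flagged; your own check (same continuous family, same limit key polynomial, same tangent direction) already settles it. It then applies Theorem \ref{vth1} to the unibranched extension $K[x]/(\phi_i)$, so defectlessness forces every relative gap to be $1$. The remaining differences are cosmetic: you argue by contradiction, you define $\mu$ as an augmentation rather than via $f\mapsto\bar{v}(f(\alpha_i))$, and you treat the case $\gamma_{i+1}=\infty$ explicitly.
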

		\begin{proof}
	 Let $w$ be a valuation on $K[x]$ extending $v.$ By Theorem \ref{mlvt1}, the MLV chain of $w$ is of the form
		\begin{align}\label{fteq1}
			w_0  \xrightarrow{\phi_1,\gamma_1} w_1  \xrightarrow{\phi_2,\gamma_2} \cdots  \longrightarrow w_{i-1}  \xrightarrow{\phi_i,\gamma_i} w_i  \longrightarrow\cdots \longrightarrow w,	
		\end{align}
		where for each $i,$ $w_i(<w)$ is residue-transcendental. It is enough to show that for an arbitrary but fixed $i,$ $w_{i-1}  \xrightarrow{\phi_i,\gamma_i} w_i$ is ordinary. As $\phi_i \in \operatorname{KP}(w_i)$ is of minimal degree, so by Theorem \ref{t2}, $w_i=w_{\alpha_i,\delta_i},$ where $\delta_i$ is the optimizing value of $\phi_i$ and $\alpha_i \in Z(\phi_i^h)$. We define $v_{\phi_i}$ on $K[x]$ as $v_{\phi_i}(f)=\bar{v}(f(\alpha_i)),$ for every $f\in K[x].$ Then $v_{\phi_i}$ is a valuation on $K[x]$ with nontrivial support $\phi_iK[x].$ Note that for any $g\in K[x]$ with $\deg g<\deg \phi_i,$ $w_i(g)=\bar{v}(g(\alpha_i))$ (\cite[Lemma 3.6]{W}), and hence $v_{\phi_i}(g)= w_i(g).$ As $v_{\phi_i}(\phi_i)=\infty$, for any $f\in K[x]$ with $\phi_i$-expansion $f=\sum_{j\geq 0}f_j \phi_i^j$,  we have \[\min_{j\geq 0}\{v_{\phi_i}(f_j\phi_i^j)\}=\min_{j\geq 0}\{w_i(f_j)+j\infty\}=w_i(f_0)=v_{\phi_i}(f_0)=v_{\phi_i}(f).\] Therefore $v_{\phi_i} =[w_i; \phi_i,\infty],$ i.e., $w_i\longrightarrow v_{\phi_i}$ is an ordinary augmentation defined by $\phi_i$ and $\infty.$ By Lemma 4.2 of \cite{EN1}, the MLV chain of $v_{\phi_i}$ is of the form 
		\[ w_0  \xrightarrow{\phi_1,\gamma_1} w_1  \xrightarrow{\phi_2,\gamma_2} \cdots  \longrightarrow w_{i-1}  \xrightarrow{\phi_i,\infty} v_{\phi_i},\] where $w_{i-1}\longrightarrow v_{\phi_i}$ is an ordinary augmentation (respectively limit) if $w_{i-1}\longrightarrow w_i$ is ordinary (respectively limit) in (\ref{fteq1}). Consider the simple extension $L=K[x]/(\phi_i) \cong K(\alpha_i).$ Since $v_{\phi_i}$ is a valuation on $K[x]$ with nontrivial support $\phi_i K[x],$ it is induced by the unique extension $v_L$ of $v$ to $L.$ If $d_j\geq1,\ 0\leq j\leq i-1,$ are the relative gaps for the MLV chain of $v_{\phi_i},$ then by Theorem \ref{vth1}, we have
		\[d(v_L|v)=d_0d_1\cdots d_{i-1}.\]
		By hypothesis, as $K(\alpha_i)|K$ is defectless, i.e., $d(v_L|v)=1,$ it follows that $d_j=1, \forall \ 0\leq j\leq i-1.$ Therefore, every step in the MLV chain of $v_{\phi_i}$ is ordinary. In particular, $ w_{i-1}\longrightarrow v_{\phi_i}$ is ordinary and hence, $w_{i-1}\longrightarrow w_i$ is also ordinary.
			\end{proof}
	Combining the above two theorems, we have
	\begin{corollary}
	A Henselian valued field $(K,v)$ is simply defectless if and only if the MLV chain of every extension of $v$ to $K[x]$ consists of only ordinary augmentations.
	\end{corollary}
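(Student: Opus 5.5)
The corollary follows by putting the two preceding theorems side by side, and no new construction is needed.

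For the implication ``$\Leftarrow$'', suppose the MLV chain of every extension of $v$ to $K[x]$ consists only of ordinary augmentations. In particular, this holds for every extension with nontrivial support. The first theorem of this section (the unnumbered one) then gives directly that $(K,v)$ is simply defectless. Henselianity is not needed for this direction.

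For the implication ``$\Rightarrow$'', suppose $(K,v)$ is Henselian and simply defectless. Theorem \ref{depth} states exactly that the MLV chain of every extension of $v$ to $K[x]$ consists only of ordinary augmentations. Its proof handles any $w$, whatever the type of its MLV chain in Theorem \ref{mlvt1}. It takes each step $w_{i-1}\to w_i$ with $w_i$ residue-transcendental and passes to the nontrivial-support valuation $v_{\phi_i}=[w_i;\phi_i,\infty]$. Because $K$ is Henselian, $K(\alpha_i)|K$ is unibranched, and Theorem \ref{vth1} then forces every relative gap to equal $1$.

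The only point to check is that the hypothesis ``every extension of $v$ to $K[x]$'' in the corollary matches the hypotheses of the two theorems. In ``$\Leftarrow$'' we use only the weaker assumption on extensions with nontrivial support, which is contained in the hypothesis. In ``$\Rightarrow$'' the conclusion of Theorem \ref{depth} is already stated for all extensions. So there is no real obstacle; the only place any substance enters is Theorem \ref{depth}'s use of Henselianity through Theorem \ref{vth1}.
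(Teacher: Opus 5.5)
Your proposal is correct and matches the paper's argument exactly: the paper gets the corollary by combining the preceding unnumbered theorem (the ``if'' direction, which uses only extensions with nontrivial support and no Henselianity) with Theorem~\ref{depth} (the ``only if'' direction). Your description of how Theorem~\ref{depth} uses Henselianity, namely uniqueness of the extension to $K(\alpha_i)$ so that Theorem~\ref{vth1} applies to $v_{\phi_i}$, accurately reflects its proof.
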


	 In the next result, we show that the defect of any valuation-transcendental extension is equal to a Henselian-defect.
		
		\begin{theorem}\label{odt2}
			Let $w$ be a valuation-transcendental extension of $v$ to $K(x),$ and $\phi\in \operatorname{KP}(w)$ of minimal degree. Then $d(w|v)=d(L^h|K^h),$ where $L=K[x]/(\phi).$ 
			Moreover, $d(w|v)$ is independent of the choice of minimal degree key polynomial $\phi.$
		\end{theorem}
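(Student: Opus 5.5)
Following the introduction's strategy, we pass from $w$ to the valuation $v_\phi$ with nontrivial support $\phi K[x]$, and then invoke Theorem \ref{T2}.

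\textbf{Step 1 (the valuation $v_\phi$).} By Theorem \ref{t2}, $w=w_{\alpha,\delta}$ with $\alpha\in Z(\phi^h)$ and $\delta=\delta(\phi)$. Define $v_\phi(f)=\bar v(f(\alpha))$. As in the proof of Theorem \ref{depth}, for $\deg g<\deg\phi$ we have $w(g)=v_\phi(g)$ by \cite[Lemma 3.6]{W}. Hence $v_\phi=[w;\phi,\infty]$ is an ordinary augmentation with support $\phi K[x]$.

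\textbf{Step 2 (comparing MLV chains).} Take an MLV chain $w_0\to\cdots\to w_{n-1}\xrightarrow{\phi_n,\gamma_n}w_n=w$ with $\phi_n=\phi$; we may do this since $\phi$ has minimal degree. By \cite[Lemma 4.2]{EN1}, the chain $w_0\to\cdots\to w_{n-1}\xrightarrow{\phi,\infty}v_\phi$ is an MLV chain of $v_\phi$, and its last step has the same nature as $w_{n-1}\to w$. The main obstacle is to show that the last defects agree:
\[d(w_{n-1}\to w)=d(w_{n-1}\to v_\phi).\]
The two augmentations have the same tangent direction. Indeed, both $w$ and $v_\phi$ agree with each other on polynomials of degree less than $\deg\phi$, and $\Phi(w_{n-1},\cdot)$ consists of polynomials of degree at most $\deg\phi$. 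Polynomials of degree exactly $\deg\phi$ in the tangent direction arise only in the ordinary case, where both defects equal $1$ by Lemma \ref{nl1}. So $\Phi(w_{n-1},w)=\Phi(w_{n-1},v_\phi)$, and $w(Q)=v_\phi(Q)$ for $Q$ in this set. Hence the auxiliary valuations $\rho_Q=[w_{n-1};Q,w(Q)]$ coincide for both augmentations. Moreover, $\phi$ serves simultaneously as a minimal degree key polynomial for $w$ and as a generator of $\mathrm{supp}(v_\phi)$. The defining quantity $\min_Q\deg_{\rho_Q}(\phi)$ is therefore literally the same in both cases. Multiplying over the chain gives $d(w|v)=d(v_\phi|v)$.

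\textbf{Step 3 (identification with a Henselian defect).} By Remark \ref{r21}, $v_\phi$ is the valuation on $K[x]$ induced by the extension $v_L=\bar v\circ\lambda_\alpha$ of $v$ to $L=K[x]/(\phi)\cong K(\alpha)$. Theorem \ref{T2} then gives $d(v_\phi|v)=d(v_L|v)$. By definition, $d(v_L|v)=[L^h:K^h]/(ef)$, with Henselizations taken with respect to $\bar v$. Since $\alpha\in Z(\phi^h)$, we have $L^h=K^h(\alpha)$. So this is the Henselian defect $d(L^h|K^h)$ of the unibranched extension $K^h(\alpha)|K^h$.

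\textbf{Step 4 (independence of $\phi$).} $d(w|v)$ is defined from an MLV chain of $w$, and the relative data of such chains do not depend on the chain chosen \cite[Theorem 4.7]{EN1}. So the left side does not depend on $\phi$. Consequently, the right side $d(L^h|K^h)$ takes the same value for every minimal degree key polynomial $\phi$. For extra care, note also that Lemma 6.1 of \cite{EN3} already states that the augmentation defect is independent of the choice of $\phi$.
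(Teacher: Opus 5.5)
Your proof is correct. Steps 1 and 3 match the paper, but your Step 2 takes a genuinely different route.

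\textbf{The paper's route.} It never compares the last augmentation defects directly. Instead it applies the identity $(\Gamma_\mu:\Gamma_v)[\kappa_\mu:k_v]\,d(\mu|v)=e(\mu)\deg(\mu^h)$ of Theorem \ref{T1} to both $\mu=w$ and $\mu=v_\phi$. It then cancels using three facts:
\begin{itemize}
\item $[\kappa_w:k_v]=[\kappa_{v_\phi}:k_v]$, obtained via $\kappa_w\cong k_{v_\phi}$ and $\mathcal{G}_{v_\phi}=\mathcal{G}^\circ_{v_\phi}$;
\item the common factor $e(w_0)\cdots e(w_{n-1})$ in the two ramification indices;
\item $\deg(w^h)=\deg\phi^h=\deg(v_\phi^h)$.
\end{itemize}

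\textbf{Your route.} You argue directly from the definition of the augmentation defect. In the ordinary case both last-step defects equal $1$ by Lemma \ref{nl1}. In the limit case both tangent directions have degree $\deg(w_{n-1})<\deg\phi$, and $w$ and $v_\phi$ agree in those degrees. So the sets $\Phi$, the auxiliary valuations $\rho_Q$, and the test polynomial $\phi$ all coincide, and the defining minimum is literally the same.

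\textbf{Comparison.} Your argument is more elementary: it needs neither Theorem \ref{T1}, nor the graded-algebra identifications, nor Henselian degrees. It also gives the sharper stepwise equality $d(w_{n-1}\to w)=d(w_{n-1}\to v_\phi)$. The paper's route instead reuses a global formula and records, as a by-product, that $w$ and $v_\phi$ have the same inertia degree.

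\textbf{Points to tighten.}
\begin{itemize}
\item Restrict the sentence ``So $\Phi(w_{n-1},w)=\Phi(w_{n-1},v_\phi)$, and $w(Q)=v_\phi(Q)$'' to the limit case. In the ordinary case $\phi$ itself lies in the tangent direction, and $w(\phi)=\gamma_n\neq\infty=v_\phi(\phi)$.
\item The replacement of $\phi_n$ by $\phi$ deserves a line, though the paper uses it tacitly too. Any minimal degree key polynomial $\phi$ has $\deg_w(\phi)=1$, hence $w(\phi-\phi_n)\geq w(\phi_n)$. Therefore it defines the same ordinary, or limit, augmentation of $w_{n-1}$ (in the limit case after passing to a cofinal subfamily).
\item With this, Step 4 is immediate and needs no appeal to \cite[Theorem 4.7]{EN1}. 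The chains attached to different choices of $\phi$ share all the valuations $w_i$, and Lemma 6.1 of \cite{EN3} makes each augmentation defect independent of the choice of $\phi$.
\end{itemize}
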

		\begin{proof}
			Since $w$ is a valuation-transcendental extension, the MLV chain of $w$ is of the form 
			\begin{align}\label{te1}
				w_0\longrightarrow w_1 \longrightarrow \cdots \longrightarrow w_{n-1} \longrightarrow w_n=w.
				\end{align} For any optimizing root $\alpha \in Z(\phi^h),$ the map\[v_{\phi} : K[x]\longrightarrow \Gamma_{\bar{v}}\cup \{\infty\}\  \text{defined as} \  v_{\phi}(f)=\bar{v}(f(\alpha)), \ \forall \ f\in K[x],\] is a valuation on $K[x]$ having nontrivial support $\phi K[x].$ Arguing as in the proof of the above theorem, we have that $v_{\phi} =[w; \phi,\infty]$ and the MLV chain of $v_{\phi}$ is
			\begin{align}\label{e2}
				w_0\longrightarrow w_1 \longrightarrow \cdots \longrightarrow w_{n-1} \longrightarrow v_{\phi}. \end{align} 
				Now for the chains (\ref{te1}) and (\ref{e2}), we have \begin{align}\label{e3}
				[\kappa_w:k_v]=[\kappa_{w_1}:k_v]\cdots [\kappa_w:\kappa_{w_{n-1}}],\ 
				[\kappa_{v_\phi}:k_v]=[\kappa_{w_1}:k_v]\cdots [\kappa_{v_\phi}:\kappa_{w_{n-1}}]
				\end{align}
				and
				\begin{align}\label{e4}
					(\Gamma_w:\Gamma)=e(w_0)\cdots e(w_{n-1})e(w),\ 
					 (\Gamma_{v_\phi}:\Gamma)=e(w_0)\cdots e(w_{n-1})e(v_\phi).
				\end{align} 
			 From Proposition 3.6 of \cite{EN4}, we have $\kappa_w \cong k_{v_\phi}.$ Since $v_\phi$ has nontrivial support, by Theorem 2.2 of \cite{EN3}, $\mathcal{G}_{v_\phi}=\mathcal{G}^\circ_{v_\phi}$ which implies that $\kappa_{v_\phi}=\Delta_{v_\phi}.$ As we know that the residue field $k_{v_\phi}$ is the field of fraction of $\Delta_{v_\phi}$ (see \cite[Section 4]{EN4}), we have $\Delta_{v_\phi}=k_{v_\phi}.$ Therefore, $\kappa_w \cong \kappa_{v_\phi},$ which implies that $[\kappa_w:\kappa_{w_{n-1}}]=[\kappa_{v_\phi}:\kappa_{w_{n-1}}],$ and hence by (\ref{e3}), $[\kappa_w:k_v]=[\kappa_{v_\phi}:k_v].$
			As $\phi \in \operatorname{KP}(w)$ of minimal degree, and $\phi K[x]=\text{supp}(v_\phi),$ we have $\deg(w)=\deg\phi=\deg (v_\phi).$ By Theorem A of \cite{EN5}, there exist unique extensions $w^h$ and $v_{\phi}^h$ of $w$ and $v_\phi$ respectively to $K^h[x].$ So by Proposition \ref{df1} and Lemma 5.7 of \cite{EN3}, we have that $\phi^h \in \operatorname{KP}(w^h)$ of minimal degree, and $\phi^h K^h[x]=\text{supp}(v_{\phi}^h),$ hence $\deg (w^h)=\deg\phi^h=\deg (v_{\phi}^h).$ On applying Theorem \ref{T1} to the valuations $w$ and $v_\phi$, we have
			\[(\Gamma_w:\Gamma)[\kappa_w:k_v]d(w|v)=e(w)\deg(w^h),\]and
			\[(\Gamma_{v_\phi}:\Gamma)[\kappa_{v_\phi}:k_v]d(v_\phi|v)=e(v_\phi)\deg(v_{\phi}^h).\] On dividing the above two equations and using (\ref{e4}), we obtain $d(w|v)=d(v_\phi|v).$ By Remark \ref{r21} and Theorem \ref{T2}, there exist an extension $v_i$ of $v$ to $L$ (the restriction of $\bar{v}$ to L) such that $d(v_\phi|v)=d(v_i|v)=d(L^h|K^h).$ Hence, the result follows.
	    \end{proof}
	    \begin{corollary}\label{2.11}
	    	Let $w$ be a valuation-transcendental extension of $v$ to $K(x),$ $\phi\in K[x]$ a minimal degree key polynomial for $w$ and $\alpha \in Z(\phi).$ If $K(\alpha)|K$ is unibranched, then  $d(w|v)=d(K(\alpha)|K).$ 
	    \end{corollary}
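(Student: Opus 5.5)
The statement follows from Theorem \ref{odt2} once we know that unibranchedness makes the Henselian defect coincide with the defect of $K(\alpha)|K$. By Theorem \ref{odt2}, $d(w|v)=d(L^h|K^h)$ with $L=K[x]/(\phi)$. More precisely, the proof of Theorem \ref{odt2} gives $d(w|v)=d(v_\phi|v)=d(v_i|v)$, where $v_i$ is the extension of $v$ to $L$ induced via Remark \ref{r21} by an optimizing root of $\phi$. So it remains to identify $d(v_i|v)$ with $d(K(\alpha)|K)$ for the given root $\alpha$.

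First, since $K(\alpha)|K$ is unibranched and $\phi$ is the minimal polynomial of $\alpha$, Remark \ref{r21} shows that $\phi$ is irreducible over $K^h$. Hence $\phi=\phi^h$ (cf. Proposition \ref{bp1}). By Theorem \ref{t21}, every root of $\phi$ is then an optimizing root, so $\alpha$ itself may be used in the construction of $v_\phi$ in the proof of Theorem \ref{odt2}. That is, $v_\phi(f)=\bar v(f(\alpha))$.

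Next, the isomorphism $L=K[x]/(\phi)\cong K(\alpha)$, $x\mapsto\alpha$, carries the valuation induced by $v_\phi$ to the restriction of $\bar v$ to $K(\alpha)$. Because $K(\alpha)|K$ is unibranched, this restriction is the unique extension $v_{K(\alpha)}$ of $v$. Theorem \ref{T2} applied to this extension gives
\[d(v_{K(\alpha)}|v)=d(v_\phi|v)=d(w|v),\]
where the last equality is the one established in the proof of Theorem \ref{odt2}. Since $d(v_{K(\alpha)}|v)=d(K(\alpha)|K)$ by definition in the unibranched case, this proves the claim.

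The only point that needs care is that Theorem \ref{odt2} as stated gives the defect for some extension of $v$ to $L$ (the one determined by an optimizing root). Unibranchedness removes the ambiguity, since there is only one extension. Alternatively, $\phi=\phi^h$ makes all roots optimizing, so the construction can be run with $\alpha$ directly. Either route gives the result.
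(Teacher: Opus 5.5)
Your proposal is correct and matches the paper, which states this corollary without a separate proof as an immediate consequence of Theorem~\ref{odt2}: since $L=K[x]/(\phi)\cong K(\alpha)$ is unibranched, the Henselian defect $d(L^h|K^h)$ is the defect of the unique extension of $v$, namely $d(K(\alpha)|K)$. Your extra observation, that $\phi=\phi^h$ makes every root optimizing so $v_\phi$ can be built from $\alpha$ itself, is a valid alternative way to remove the ambiguity in the choice of root, though it is not needed once uniqueness is used.
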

	  Using the above corollary, we now prove that the defect of a unibranched simple algebraic extension generated by a root of any key polynomial for $w$ is an invariant for $w.$
	    \begin{theorem}\label{2.12}
	    	Let $w$ be a valuation-transcendental extension of $v$ to $K(x),$ and $f,g\in \operatorname{KP}(w)$ such that  $f=f^h.$ Then  \[d(K(\theta)|K)=d(K(\eta)|K), \]	for every $\theta\in Z(f), \ \text{and} \ \eta \in Z(g).$
	    \end{theorem}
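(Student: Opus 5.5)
By Proposition \ref{bp3}(ii), the hypothesis $f=f^h$ implies $g=g^h$, and also $\phi=\phi^h$ for any minimal degree key polynomial $\phi$ of $w$. Proposition \ref{bp1} then says that $K(\theta)|K$, $K(\eta)|K$ and $K(\alpha)|K$ are all unibranched, for every $\theta\in Z(f)$, $\eta\in Z(g)$ and $\alpha\in Z(\phi)$. So both defects in the statement are well defined. Corollary \ref{2.11} gives $d(K(\alpha)|K)=d(w|v)$ for every root $\alpha$ of $\phi$. It therefore suffices to prove that $d(K(\theta)|K)=d(w|v)$ for every $f\in\operatorname{KP}(w)$ with $f=f^h$ and every $\theta\in Z(f)$, and then apply this to $f$ and to $g$.

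The natural route is to reduce to the minimal degree case by changing the valuation. I would replace $w$ by a valuation $w'$ for which $f$ is a key polynomial of minimal degree and $d(w'|v)=d(w|v)$. The first candidate is $w'=[w;f,\gamma]$ with $\gamma>w(f)$, $\gamma\in\Gamma$. Then $f\in\operatorname{KP}(w')$ has minimal degree and $w'$ is valuation-transcendental. Applying Corollary \ref{2.11} to $w'$, which is legitimate because $K(\theta)|K$ is unibranched, gives $d(K(\theta)|K)=d(w'|v)$.

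It remains to compare $d(w'|v)$ with $d(w|v)$. The MLV chain of $w'$ is obtained from that of $w$ in one of two ways. If $\deg f>\deg(w)$, we append the ordinary step $w\to w'$. If $\deg f=\deg(w)$, we replace the last step of the chain of $w$ by $w_{n-1}\to w'$, which is of the same nature; this is the Lemma 4.2 of \cite{EN1} situation used in the proof of Theorem \ref{depth}. In the first case, Lemma \ref{nl1} gives that the appended step has defect $1$, so $d(w'|v)=d(w|v)$. In the second case, $\deg f=\deg\phi$, and Theorem \ref{odt2} already gives the result with $f$ in place of $\phi$.

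The main obstacle is the first case. I must make sure that appending $w\to w'$ really gives the MLV chain of $w'$, that is, that the defect of the last step of the chain of $w$ is unchanged and the new step is ordinary with defect $1$. If that bookkeeping is delicate, I would fall back on Theorem \ref{T1}. The plan there is to compare $(\Gamma_{w'}:\Gamma_v)$, $[\kappa_{w'}:k_v]$, $e(w')$ and $\deg(w'^h)=\deg f^h=\deg f$ with the corresponding data for $w$. Then use Proposition \ref{bp3}, through $\deg f/\deg f^h=\deg\phi/\deg\phi^h=1$, together with Lemma \ref{kpl1}, to check that the ratio of defects equals $1$.
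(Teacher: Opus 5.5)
Your proposal is correct and follows the paper's route: reduce to the minimal-degree case with the ordinary augmentation $w'=[w;f,\gamma]$, observe that $w\to w'$ extends the MLV chain of $w$ and has defect $1$ by Lemma \ref{nl1}, and conclude with Corollary \ref{2.11} applied to both $w'$ and $w$. The obstacle you flag is settled directly: $\deg(\Phi(w,w'))=\deg(w')=\deg f>\deg(w)$ is exactly the MLV condition for an ordinary step, so the fallback through Theorem \ref{T1} is unnecessary.
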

	    \begin{proof}Let $\phi$ be a minimal degree key polynomial for $w,$ and $\alpha\in Z(\phi)$ be arbitrary. As $f=f^h$, using Propositions \ref{bp1} and \ref{bp2} we have, $K(\alpha)|K, K(\theta)|K$ and $K(\eta)|K$ are all unibranched. If $w$ is value-transcendental, then $f,g\in \operatorname{KP}(w)$ are also of minimal degree, therefore by above corollary, we have that \[d(K(\theta)|K)=d(K(\eta)|K), \] for every $\theta\in Z(f)$ and $\eta \in Z(g).$ Suppose now that $w$ is residue-transcendental. If $f$ and $g$ are minimal degree key polynomials for $w,$ then the result follows immediately from the above corollary. Therefore, assume that $\deg f>\deg \phi,$ and define an ordinary augmentation $w'=[w;f,\gamma]$ for some $\gamma\in\Gamma$ such that $\gamma>w(f).$ Then $f\in \operatorname{KP}(w')$ of minimal degree, which implies that \[\deg (w')=\deg f=\deg (\Phi(w,w'))>\deg \phi=\deg (w),\] i.e., $w\longrightarrow w'$ is an MLV step. Therefore, from the MLV chain of $w,$  we obtain the MLV chain of $w'$ (say)
	    	\[w_0\longrightarrow w_1 \longrightarrow \cdots \longrightarrow w_{n-1} \longrightarrow w_n=w\longrightarrow w'.\]
	    	Clearly, $d(w'|v)=d(w|v)d(w\longrightarrow w').$ As $w\longrightarrow w'$ is an ordinary augmentation, by Lemma \ref{nl1}, $d(w\longrightarrow w')=1.$	It follows that
	    	\begin{align}\label{e5}
	    		d(w'|v)=d(w|v).\end{align}
	    	Now, using the above corollary, we have that 
	    	\[d(w'|v)=d(K(\theta)|K), \ \forall \ \theta \in Z(f),\]  and \[d(w|v)=d(K(\alpha)|K), \ \forall \ \alpha \in Z(\phi).\]	Hence, from (\ref{e5}) we have\[d(K(\theta)|K)=d(K(\alpha)|K), \] for every $\theta\in Z(f), \ \text{and} \ \alpha \in Z(\phi).$ Arguing similarly for $g\in\operatorname{KP}(w),$ we have
	    	\[d(K(\eta)|K)=d(K(\alpha)|K), \] for every $\eta\in Z(g), \ \text{and} \ \alpha \in Z(\phi).$ Hence, the result follows from the above two equations.
	    \end{proof}
	    
	    For any $\theta\in\overline{K},$ the map  $v_\theta:K[x]\longrightarrow \Gamma$ given by \[v_\theta(g)=\bar{v}(g(\theta)) \ \text{for every} \ g\in K[x],\] is a valuation on $K[x]$ with nontrivial support $fK[x],$ where $f$ is the minimal polynomial of $\theta$ over $K.$ Then by the {\bf depth} of $\theta,$ we mean the length of the MLV chain of the valuation $v_\theta$ i.e.,\[\text{depth}(\theta):=\text{depth}(v_\theta).\]
	    
	    Let $w$ be a valuation-transcendental extension and $\phi$ be any minimal degree key polynomial for $w$. Then from the proof of Theorem \ref{depth}, we have depth($w$)$=$depth($v_\alpha$) for every optimizing root $\alpha$ of $\phi.$ Now if $f\in\operatorname{KP}(w)$ not of minimal degree, then for the ordinary augmentation $w'=[w;f,\gamma]$ for some $\gamma\in\Gamma$ such that $\gamma>w(f),$ we have that $f$ is the minimal degree key polynomial for $w'$ and $\text{depth}(w')=\text{depth}(w) +1.$ Keeping this in mind, we have
	    \begin{proposition}
	    	Let $w$ be a  valuation-transcendental extension and $f$ be any key polynomial for $w.$ Then for any optimizing root $\theta$ of $f,$ we have
	    	\[\textup{depth}(\theta)=\begin{cases}
	    		\textup{depth}(w) &\ \text{if}\  \deg f=\deg(w)\\
	    		\textup{depth}(w)+1 &\ \text{if}\  \deg f>\deg(w)
	    	\end{cases}.\] 
	    \end{proposition}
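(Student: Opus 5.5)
The plan is to reduce both cases to the observation recorded just before the statement, extracted from the proof of Theorem \ref{depth}. For a valuation-transcendental $u$ with a minimal degree key polynomial $\psi$ and an optimizing root $\alpha$ of $\psi$, the map $v_\alpha$ equals $[u;\psi,\infty]$. Its MLV chain is obtained from that of $u$ by replacing the last step $u_{n-1}\xrightarrow{\psi_n,\gamma_n}u$ with $u_{n-1}\xrightarrow{\psi,\infty}v_\alpha$, by Lemma 4.2 of \cite{EN1}. Hence $\textup{depth}(v_\alpha)=\textup{depth}(u)$.

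\emph{Case $\deg f=\deg(w)$.} Here $f$ is a minimal degree key polynomial for $w$, and an optimizing root $\theta$ of $f$ is, by Theorem \ref{t21}, a root of $f^h$. So $\theta$ is exactly the kind of root used to define $v_\phi$ in the proof of Theorem \ref{odt2}, with $\phi=f$. Arguing as there, $v_\theta=[w;f,\infty]$. The MLV chain of $v_\theta$ is the MLV chain of $w$ with the last step $w_{n-1}\to w$ replaced by $w_{n-1}\to v_\theta$. This requires that $f$ can serve as the last key polynomial $\phi_n$ of an MLV chain of $w$. I expect this is justified because any minimal degree key polynomial of $w$ not $w$-equivalent to $\phi_n$ can replace it, and if $f\sim_w\phi_n$ then $[w_{n-1};f,\gamma_n]=w$ anyway. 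Since the length is unchanged, $\textup{depth}(\theta)=\textup{depth}(w)$.

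\emph{Case $\deg f>\deg(w)$.} Choose $\gamma\in\Gamma$ with $\gamma>w(f)$ and set $w'=[w;f,\gamma]$. Then $f\in\operatorname{KP}(w')$ has minimal degree, and $\deg(\Phi(w,w'))=\deg f>\deg(w)$. So, exactly as in the proof of Theorem \ref{2.12}, $w\to w'$ is an ordinary MLV step, the MLV chain of $w$ extends to one of $w'$, and $\textup{depth}(w')=\textup{depth}(w)+1$.

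It remains to apply the first case to $w'$ and $f$. The main obstacle is checking that $\theta$, which is optimizing for $f$ relative to $w$, is also an optimizing root of $f$ relative to $w'$. I would prove this with Theorem \ref{t21} applied twice: the optimizing roots of $f$ for $w$ form $Z(f^h)$ with $f^h$ computed for $w^h$, and for $w'$ they form $Z(f^{h})$ computed for $w'^h$. So it suffices to show these two factors coincide.

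For that, note that $w'^h$ is the ordinary augmentation $[w^h;f^h,\gamma']$ for a suitable $\gamma'$, so $f^h\in\operatorname{KP}(w'^h)$. Moreover, $\text{in}_{w'^h}(f/f^h)$ remains a unit because $w'^h$ agrees with $w^h$ on $f/f^h$, whose degree relative to $f^h$ expansions is $0$. The uniqueness in Proposition \ref{df1} then identifies the two factors. As a fallback, I would compare directly via Theorem \ref{t2}: a common extension $\overline{w}'$ dominates a common extension $\overline{w}$, and the roots maximizing $\overline{w}'(x-\beta)$ among $Z(f)$ also maximize $\overline{w}(x-\beta)$.

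Granting this, the first case gives $\textup{depth}(\theta)=\textup{depth}(w')=\textup{depth}(w)+1$.
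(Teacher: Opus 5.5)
Your proposal is correct and follows the paper's own argument. Case one is the observation from the proof of Theorem \ref{depth} that $v_\theta=[w;f,\infty]$ has an MLV chain of the same length as that of $w$. Case two passes to the ordinary MLV step $w\to w'=[w;f,\gamma]$ and applies case one to $w'$.

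The point you flag as the main obstacle, that an optimizing root of $f$ for $w$ stays optimizing for $w'$, is left implicit in the paper. Your reduction via Theorem \ref{t21} and the uniqueness in Proposition \ref{df1} is a sound way to settle it, provided you supply a reference or proof for the identity $w'^h=[w^h;f^h,\gamma']$, which you assert without justification.
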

	     In 2026, Nart gave the description of key polynomials in terms of minimal polynomials of elements of some suitable balls (see \cite[Proposition 5.1 and Theorem 5.5]{EJP}). Therefore, the above proposition can also be interpreted as 
	     \begin{proposition}
	     	For any pair $(\alpha,\delta)\in \overline{K}\times\Gamma,$ consider the sets $A=\min_KB(\alpha,\delta)\footnote{For any set $S\subset \overline K,$ $\min_K(S)$ is the subset of $S$ containing all minimal degree elements of $S$ over $K.$  }$ and $B=\{\theta\in B(\alpha,\delta)\mid \theta\in \min_KB^\circ(\theta,\delta)\}.$ Then we have \[\ \textup{depth}(\beta)=m,\ \forall \ \beta\in A \ \text{and} \ \textup{depth}(\theta)=m+1, \ \forall \ \theta\in B\backslash A,\] for some nonnegative integer $m.$ Moreover, $m=\textup{depth}(w),$ where $w=w_{\alpha,\delta}.$
	     \end{proposition}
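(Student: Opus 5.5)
The plan is to reduce the statement to the preceding proposition, using Nart's description of key polynomials via balls \cite[Proposition 5.1 and Theorem 5.5]{EJP}. Put $w=w_{\alpha,\delta}$. Since $\delta\in\Gamma$, $w$ is valuation-transcendental, and Lemma \ref{il1} shows that $w$ depends only on the ball $B(\alpha,\delta)$. Set $m=\textup{depth}(w)$.

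The cited description of $\operatorname{KP}(w)$ gives two facts. First, the minimal degree key polynomials of $w$ are exactly the minimal polynomials over $K$ of the elements of $A=\min_K B(\alpha,\delta)$. Second, the key polynomials of degree larger than $\deg(w)$ are exactly the minimal polynomials of those $\theta\in B(\alpha,\delta)$ with $\theta\in\min_K B^\circ(\theta,\delta)$ that do not lie in $A$. Moreover, in either case the element is an optimizing root of its minimal polynomial. I expect this translation to be the main obstacle: one must check that optimizing roots of $f$ are exactly the roots lying in $B(\alpha,\delta)$ for some common extension, and that these satisfy the ball conditions.

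For the optimizing-root claim, use Theorem \ref{t2}. Every common extension has the form $\overline{w}_{\alpha',\delta}$ with $\alpha'$ an optimizing root, and $\delta(f)=\delta$. Any $\theta\in B(\alpha,\delta)$ therefore satisfies $\overline{w}_{\alpha,\delta}(x-\theta)=\delta$, which is the maximum possible value. Hence $\theta$ is an optimizing root of its minimal polynomial, provided that polynomial is a key polynomial; this is exactly what the cited results of \cite{EJP} provide.

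Given these identifications, I apply the preceding proposition. Each $\beta\in A$ is an optimizing root of a key polynomial $f$ with $\deg f=\deg(w)$, so $\textup{depth}(\beta)=\textup{depth}(w)=m$. Each $\theta\in B\setminus A$ is an optimizing root of a key polynomial of degree $>\deg(w)$, so $\textup{depth}(\theta)=m+1$. The case split is consistent because $A\subseteq B$: an element of minimal degree in $B(\alpha,\delta)$ is also of minimal degree in $B^\circ(\theta,\delta)\subseteq B(\alpha,\delta)$. Elements of $B$ outside $A$ have larger degree, so their key polynomials have degree $>\deg(w)$.
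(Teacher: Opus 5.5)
Your proposal is correct and follows the paper's route. The paper obtains this statement by reinterpreting the preceding proposition through Nart's description of $\operatorname{KP}(w_{\alpha,\delta})$ as the minimal polynomials of elements of $B$, with $A$ giving exactly the minimal-degree ones, and that is what you do. The step you flag as the main obstacle is in fact immediate: $\overline{w}_{\alpha,\delta}(x-\gamma)=\min\{\delta,\bar{v}(\alpha-\gamma)\}\leq\delta$ for every $\gamma\in\overline{K}$, so each element of $B(\alpha,\delta)$ is an optimizing root of its minimal polynomial with respect to $\overline{w}_{\alpha,\delta}$.
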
 
	    
	\section{Distinguished Pairs} 
	Throughout the section, $w$ is a residue-transcendental extension. Here, we give a characterization of distinguished pairs in terms of key polynomials and generalize some results of distinguished pairs which were earlier known for Henselian valued fields. 
	
	We first recall the fundamental theorem of residue-transcendental extension (see \cite[Theorem2.1]{APZ1} and \cite[Theorem 1.4]{SK}).
	\begin{theorem}\label{dpth1}
Let $(K,v), (\overline{K},\bar{v})$ be as before and $w$ a residue-transcendental extension of $v$ to $K(x).$  Let $\overline{w}$ be a common extension of $w$ and $\bar{v}$ to $\overline{K}(x)$ with  minimal pair of definition $(\alpha,\delta)\in\overline{K}\times \Gamma_{\bar{v}}.$ Let $\phi$ be the minimal polynomial of $\alpha$ over $K$ of degree  $n$ with $w(\phi)=\gamma,$ and $\Gamma_\alpha, k_\alpha$ denote respectively the value group and residue field of $\bar{v}$ restricted to $K(\alpha).$ Then the following hold:
\begin{enumerate}[(i)]
	\item For a non-zero polynomial  $g$  in $K[x]$ of degree less than  $n,$  we have  $w(g)=\bar{v} (g(\alpha)).$
	\item For any  polynomial $f$ in $K[x]$ with  $\phi$-expansion $\sum_{i\geq 0}f_i \phi^i,$ $\deg f_i< n,$ we have 
	$$w(f)=\min_{i\geq 0}\{\bar{v}(f_i(\alpha))+i\gamma \}.$$
	\item Let $e$ be the smallest positive integer such that $e\gamma\in \Gamma_{\alpha}.$ Then there exists a polynomial $h \in K[x]$ of  degree less than $n,$ such that $w(h)=\bar{v}(h(\alpha))=e\gamma,$    the
	$w$-residue, $ r^{*},$ of $\phi^{e}/h$ is transcendental over $k_{\alpha}$ and $k_{w}=k_{\alpha}(r^*).$ 
\end{enumerate}
\end{theorem}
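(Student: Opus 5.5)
We follow the classical route through the minimal pair $(\alpha,\delta)$ and the valuation $\overline{w}=\overline{w}_{\alpha,\delta}$. For (i), write $g(x)=c\prod_{\beta\in Z(g)}(x-\beta)$ with $c\in K$, and compare $w(g)=\overline{w}(g)=\bar v(c)+\sum_\beta \overline{w}(x-\beta)$ with $\bar v(g(\alpha))=\bar v(c)+\sum_\beta \bar v(\alpha-\beta)$. Since $\overline{w}(x-\beta)=\min\{\bar v(\alpha-\beta),\delta\}$, it suffices to show that $\bar v(\alpha-\beta)<\delta$ for every root $\beta$ of $g$. This is exactly where minimality enters. If $\bar v(\alpha-\beta)\ge\delta$, then minimality of $(\alpha,\delta)$ forces $\deg\beta\ge n$. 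But $\beta$ is a root of $g$, so $\deg\beta\le\deg g<n$, a contradiction. Hence each factor contributes the same value, and $w(g)=\bar v(g(\alpha))$.

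For (ii), the key input is that $\phi$ is a minimal degree key polynomial for $w$, as recalled after the definition of key polynomials (citing \cite{JN}, \cite{Ma}). In particular $\phi$ is $w$-minimal, so for the $\phi$-expansion we have $w(f)=\min_i\{w(f_i)+i\,w(\phi)\}$ by \cite[Proposition 2.3]{EN4}. Substituting (i) for each $f_i$ with $\deg f_i<n$ then gives the displayed formula, with $\gamma=w(\phi)$.

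For (iii), first note that by (i), $\Gamma_w^\circ=\{w(a):\deg a<n\}=\{\bar v(a(\alpha))\}=\Gamma_\alpha$, since every element of $K(\alpha)$ is $a(\alpha)$ with $\deg a<n$. Also $\Gamma_w=\langle\Gamma_\alpha,\gamma\rangle$, using \cite[Lemma 2.11]{EN4}. Since $w$ is residue-transcendental, $\delta\in\Gamma_{\bar v}$, and hence $\gamma$ is torsion modulo $\Gamma_\alpha$, so $e$ exists. Pick $h$ with $\deg h<n$ and $\bar v(h(\alpha))=e\gamma$; then $w(h)=e\gamma$ by (i), and $\phi^e/h$ has value $0$.

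Next, the residue field. An element of value zero in $K(x)$ can be written as $F/G$ with $F,G$ having $\phi$-expansions and $w(F)=w(G)$. By (ii), the initial terms of $F$ and $G$ only involve monomials $f_i\phi^i$ with $i\gamma\equiv w(f_i)\pmod{\Gamma_\alpha}$, that is, with $e\mid i$ after normalization. Writing $f_i\phi^i=(f_i h^{i/e})(\phi^e/h)^{i/e}$, we see that residues of such quotients lie in $k_\alpha(r^*)$. Here we use that residues of $a/b$ with $\deg a,\deg b<n$ are residues of $a(\alpha)/b(\alpha)$, and these lie in $k_\alpha$. To make this last point precise, we use that the map $a\mapsto a(\alpha)$ identifies graded pieces of degree $<n$ with the graded algebra of $K(\alpha)$, which follows from (i). Transcendence of $r^*$ follows because a nontrivial algebraic relation $\sum c_j^*(r^*)^j=0$ with lifts $c_j$ of degree $<n$ would contradict (ii). Indeed, the polynomial $\sum c_j h^{m-j}\phi^{ej}$ has a $\phi$-expansion whose $w$-value is attained at each nonzero term, so its residue cannot vanish. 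One must check that products $c_jh^{m-j}$ reduce to degree $<n$ without changing values; again this holds via (i) through evaluation at $\alpha$.

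We expect the main obstacle to be the residue-field description in (iii): handling the reduction of products of degree-$<n$ polynomials modulo $\phi$ while controlling values. We would address it by working entirely in $K(\alpha)$ via $a\mapsto a(\alpha)$, using (i) to transport values.
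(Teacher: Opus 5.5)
The paper does not prove this theorem; it quotes it from \cite[Theorem 2.1]{APZ1} and \cite[Theorem 1.4]{SK}, and your outline follows that classical argument. Part (i) is correct as written. Part (ii) is legitimate given what the paper recalls: $\phi$ is a minimal degree key polynomial for $w_{\alpha,\delta}$, and \cite[Proposition 2.3]{EN4} applies. The gap is in (iii), at exactly the point you flagged.

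You claim that replacing a product $c_jh^{m-j}$ by its remainder $s_j$ modulo $\phi$ is harmless ``via (i) through evaluation at $\alpha$''. But (i) only gives equality of values: $w(s_j)=\bar v(s_j(\alpha))=\bar v(c_j(\alpha)h(\alpha)^{m-j})=w(c_jh^{m-j})$. Your transcendence argument needs more, namely $w(c_jh^{m-j}-s_j)>w(c_jh^{m-j})$, i.e.\ equality of initial forms. Set $P=\sum_j c_jh^{m-j}\phi^{ej}$ and $P'=\sum_j s_j\phi^{ej}$. Without that strict inequality, $P-P'$ can have value exactly $me\gamma$, and then $w(P')=me\gamma$ (from (ii)) no longer contradicts $w(P)>me\gamma$. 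Neither (i) nor (ii) rules this out. Applying (ii) to $c_jh^{m-j}=s_j+t_{j,1}\phi+t_{j,2}\phi^2+\cdots$ only shows the minimum is attained at index $0$, not that it is attained only there.

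The same defect affects your residue-field step. There you need residues of value-zero quotients of products of degree-$<n$ polynomials to lie in $k_\alpha$, both for $k_\alpha\hookrightarrow k_w$ and for $k_w\subseteq k_\alpha(r^*)$. It also undermines the claim that $a\mapsto a(\alpha)$ is multiplicative on initial forms. You cannot repair this by applying (i) to $a(x)-a(\alpha)$ or to $a\,b(\alpha)-a(\alpha)\,b$. These lie in $K(\alpha)[x]$, and over $K(\alpha)$ the pair $(\alpha,\delta)$ is not minimal.

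The missing ingredient is to strengthen (i) to an equivalence. For $0\neq g\in K[x]$ with $\deg g<n$, one has $\overline{w}(g(x)-g(\alpha))>\bar v(g(\alpha))$, i.e.\ $g\sim_{\overline{w}}g(\alpha)$; this is Lemma \ref{dpl12} (\cite[Lemma 2.1(ii)]{SK}). It comes straight out of your proof of (i). Since $\bar v(\alpha-\beta)<\delta$ for every root $\beta$ of $g$, each factor $(x-\beta)/(\alpha-\beta)=1+(x-\alpha)/(\alpha-\beta)$ has $\overline{w}$-residue $1$.

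With this lemma, $c_jh^{m-j}\sim_{\overline{w}}c_j(\alpha)h(\alpha)^{m-j}=s_j(\alpha)\sim_{\overline{w}}s_j$. So the reductions preserve initial forms, $w(P)=w(P')=me\gamma$, and your contradiction goes through. Likewise, every value-zero quotient of products of degree-$<n$ polynomials has the same residue as the corresponding element of $K(\alpha)$.

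Also fix the normalization in the residue-field argument. The indices attaining $w(F)$ (and $w(G)$) are congruent to each other modulo $e$, not divisible by $e$. So divide by $c\,\phi^{i_0}$, where $i_0$ is the least such index and $c$ has $\deg c<n$ and $w(c)=w(F)-i_0\gamma\in\Gamma_\alpha$.

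Alternatively, you can read off transcendence in $k_{\overline{w}}=k_{\bar v}(t)$, where $t$ is the residue of $(x-\alpha)/b$ with $\bar v(b)=\delta$. By the same lemma, $r^*$ is a nonzero constant times $\prod(t+u_i)^e$, the product running over the roots $\alpha_i$ of $\phi$ with $\bar v(\alpha-\alpha_i)\geq\delta$, where $u_i$ is the residue of $(\alpha-\alpha_i)/b$. This is a nonconstant polynomial in $t$, hence transcendental over $k_\alpha$.
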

Using the canonical homomorphism from the valuation ring $O_v$ of $v$ onto its residue field $k_v,$ we can lift any monic polynomial  with coefficients in $k_v$ to obtain a monic polynomial with coefficients in $O_v.$ In 1995, Popescu and Zaharescu \cite{PZ} extended this notion using $(K,v)$-minimal pairs for local  fields, which also holds for arbitrary valued fields. 
\begin{definition} 
Let $w$ be a residue-transcendental extension of $v$ to $K(x).$ With notations and hypothesis as in Theorem \ref{dpth1}, a monic  polynomial $f$ in $K[x]$ is said to be a {\bf lifting} of a monic polynomial $U(Y),$ in an indeterminate $Y=r^*$ over $k_\alpha$ having degree $m\geq 1,$ with respect to $(\alpha,\delta)$ (or with respect to $w$), if the following conditions are satisfied:
\begin{enumerate}[(i)]
	\item $\deg f=emn,$
	\item $w(f)=w(h^{m})=em\gamma,$
	\item $w$-residue of $\frac{f}{h^m}$ is equal to $U(Y).$ 
\end{enumerate}
\end{definition}
The lifting $f$ of $U(Y)$ is called trivial if  $\deg f=\deg \phi,$ i.e., $\deg U(Y)=1$ and $\gamma=w(\phi)\in\Gamma_\alpha,$ where $\phi$ is the minimal polynomial of $\alpha$ over $K.$

\begin{definition}
A pair $(\theta,\alpha)$ of elements of $\overline{K}$ is called a $(K,v)$-{\bf distinguished pair}  if the following conditions are satisfied:
\begin{enumerate}[(i)]
	\item $\deg\theta>\deg\alpha,$
	\item $\bar{v}(\theta-\alpha)=\max\{\bar{v}(\theta-\beta)\mid \beta\in\overline{K},\, \deg\beta<\deg\theta \},$
	\item if $\eta\in\overline{K}$ is such that $\deg\eta<\deg\alpha,$ then $\bar{v}(\theta-\eta)<\bar{v}(\theta-\alpha).$
\end{enumerate}
\end{definition}
Clearly for $\delta=\bar{v}(\theta-\alpha)$ and $\delta'>\delta,$ $(\alpha,\delta)$ and $(\theta,\delta')$ are both $(K,v)$-minimal pairs. Moreover, from the definition it also follows that $\overline{w}_{\alpha,\delta} (g)=\bar{v}(g(\theta))$ for any polynomial $g\in K[x]$ of degree less than $\deg \theta.$

For any two irreducible polynomials $f$ and $g$ over $K,$ we call $(g,f)$ a distinguished pair, if there exists a root $\theta$ of $g$ and a root $\alpha$ of $f$ such that $(\theta,\alpha)$ is a $(K,v)$-distinguished pair.

Distinguished pairs give rise to distinguished chains in a natural manner. A chain $\theta=\theta_n,\theta_{n-1},\ldots,\theta_0$ of elements of $\overline{K}$ is called a {\bf saturated distinguished chain}  for $\theta$  of length $n,$ if $(\theta_{i+1},\theta_{i})$ is a $(K,v)$-distinguished pair for $0\leq i\leq n-1$ and $\theta_0\in K.$  Similarly, a chain $\phi=\phi_n,\phi_{n-1},\ldots,\phi_0$ of polynomials in $K[x]$ is called a saturated distinguished chain for $\phi$  of length $n,$ if $(\phi_{i+1},\phi_{i})$ is a distinguished pair for $0\leq i\leq n-1$ and $\phi_0 $ has degree one over $K.$\\
The  following results give characterization of key polynomials and distinguished pairs using liftings of irreducible polynomials.
\begin{theorem}(\cite[Theorem 4.6]{PP})\label{ppth}
Let $w$ be a residue-transcendental extension of $v$ to $K(x).$ With notations and hypothesis as in Theorem \ref{dpth1}, a polynomial $f\in K[x]$, with $\deg f> \deg \phi,$ is a key polynomial for $w$ if and only if it is a nontrivial lifting of some monic irreducible polynomial over $k_\alpha,$ whose constant term is nonzero, with respect to $(\alpha,\delta)$ and $h.$
\end{theorem}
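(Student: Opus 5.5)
The plan is to translate both conditions into statements about $w$-residues of the form $f/h^m$, using the explicit description of $w$ in Theorem \ref{dpth1}. Throughout, $n=\deg\phi$, $\gamma=w(\phi)$, and $e,h,r^*=Y$ are as in Theorem \ref{dpth1}(iii). For a monic $f$ with $\phi$-expansion $f=\sum_{i=0}^{s} f_i\phi^i$, Theorem \ref{dpth1}(ii) gives $w(f)=\min_i\{\bar v(f_i(\alpha))+i\gamma\}$. Each term $\bar v(f_i(\alpha))+i\gamma$ lies in $\Gamma_\alpha+i\gamma$. Since $e$ is the order of $\gamma$ modulo $\Gamma_\alpha$, two indices can attain the minimum only if they are congruent modulo $e$. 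This congruence observation is the basic bookkeeping tool.

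\emph{Key polynomial $\Rightarrow$ lifting.} Let $f\in\operatorname{KP}(w)$ with $\deg f>n$. By Lemma \ref{kpl1}, $f$ is $w$-minimal exactly when $\deg f=\deg_w(f)\,n$. Since $f$ is monic with top $\phi$-coefficient $1$, this forces $s=\deg_w(f)$ and $w(f)=s\gamma$.
\begin{enumerate}[(1)]
\item I would show that the constant term $f_0$ also attains the minimum. If it did not, $\operatorname{in}_w(f)$ would be divisible by $\operatorname{in}_w(\phi)$ in $\mathcal G_w$. Then $\phi\mid_w f$, and $w$-irreducibility together with $\deg f>n$ would contradict the standard fact that a key polynomial is $w$-equivalent only to a multiple of itself among minimal-degree objects.
\item Since the indices $0$ and $s$ both attain the minimum, the congruence observation gives $e\mid s$. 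Write $s=em$, so $\deg f=emn$ and $w(f)=em\gamma=w(h^m)$.
\item For each index $i=ej$ attaining the minimum, $w(f_{ej})=(m-j)e\gamma$ lies in $\Gamma_\alpha$. Hence the residue of $f_{ej}h^{j}/h^{m}$ is a unit $c_j\in k_\alpha$, obtained by evaluating at $\alpha$ via Theorem \ref{dpth1}(i).
\item Summing over these indices, the residue of $f/h^m$ is $U(Y)=\sum_j c_jY^j$. It is monic of degree $m$ and has nonzero constant term $c_0$.
\item Multiplicativity of residues then shows that a factorization $U=U_1U_2$ in $k_\alpha[Y]$ lifts to a $w$-factorization. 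This contradicts $w$-irreducibility, so $U$ is irreducible.
\end{enumerate}
Finally, $\deg f>n$ excludes the trivial case, so $f$ is a nontrivial lifting.

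\emph{Lifting $\Rightarrow$ key polynomial.} Let $f$ be a nontrivial lifting of a monic irreducible $U$ with $U(0)\neq0$. From $w(f)=w(h^m)=em\gamma$ and $\deg f=emn$, the leading $\phi$-term $\phi^{em}$ attains the minimum, so $\deg_w(f)=em$. Lemma \ref{kpl1} then gives that $f$ is $w$-minimal.

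For $w$-irreducibility, I would use the structure of the grade-zero part of $\mathcal G_w$. It is generated over $k_\alpha$ by $\operatorname{in}_w(\phi^e/h)$, which is transcendental, so it is essentially $k_\alpha[Y]$, and $\mathcal G_w$ is obtained from it by adjoining homogeneous units and $\operatorname{in}_w(\phi)$. Under this identification $\operatorname{in}_w(f)$ corresponds, up to the unit $\operatorname{in}_w(h^m)$, to $U(Y)$. This element is prime because $U$ is irreducible and is not associated to $Y$, which is where $U(0)\neq 0$ is used. Primality of $\operatorname{in}_w(f)$ is exactly $w$-irreducibility of $f$.

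\emph{Main obstacle.} I expect the hardest step to be justifying this identification of homogeneous primes of $\mathcal G_w$ with irreducible polynomials $U\neq Y$ over $k_\alpha$, together with step (1), showing that $f_0$ attains the minimum for a key polynomial. I would try to derive both from Theorem \ref{dpth1}(iii), by proving that every homogeneous element of grade $\ell e\gamma$ equals $\operatorname{in}_w(h^\ell)$ times a polynomial in $\operatorname{in}_w(\phi^e/h)$ with coefficients in $k_\alpha$.
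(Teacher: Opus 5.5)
The paper gives no proof of this statement; it is quoted from \cite[Theorem 4.6]{PP}, so there is no internal argument to compare yours with. Your outline is correct. It amounts to a self-contained proof in the graded-algebra language of \cite{EN4}, in the spirit of its residual-polynomial description of key polynomials, and it uses only Theorem \ref{dpth1}, Lemma \ref{kpl1} and Lemma \ref{dpl12}. Your congruence-mod-$e$ bookkeeping in steps (2)--(4) is the same computation the paper carries out in the converse of Theorem \ref{dpmt}, just before it invokes this theorem. Citing buys brevity. Your route works directly with the paper's definition of key polynomial ($w$-irreducible and $w$-minimal), so no translation between frameworks is needed. It also shows that $U(0)\neq 0$ is used only for the primality of $\operatorname{in}_w(f)$.

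A few points should be tightened.

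\emph{Step (1)} is not an obstacle and needs no ``standard fact''. If $w(f_0)>w(f)$, then $f\sim_w\phi g$ with $0\neq g=\sum_{i\geq 1}f_i\phi^{i-1}$, so $f\mid_w \phi g$. Then $w$-irreducibility gives $f\mid_w\phi$ or $f\mid_w g$, and $w$-minimality excludes both because $\deg\phi<\deg f$ and $\deg g<\deg f$.

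\emph{Step (3).} Theorem \ref{dpth1}(i) only matches values. To identify the residue of $f_{ej}/h^{m-j}$ with $(f_{ej}(\alpha)/h(\alpha)^{m-j})^*\in k_\alpha$ you need Lemma \ref{dpl12}, as in the proof of Theorem \ref{dpmt}.

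\emph{Step (5).} The lifts $F_1,F_2$ of $U_1,U_2$ must have degree less than $\deg f$. For example, take $F_k=\sum_j r_{kj}\phi^{ej}$ with $\deg r_{kj}<n$, $r_{k,m_k}=1$, and $r_{kj}(\alpha)/h(\alpha)^{m_k-j}$ lifting the $j$-th coefficient of $U_k$; then $\deg F_k=em_kn<emn$. Only with such lifts does $f\sim_w F_1F_2$ contradict irreducibility together with minimality.

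\emph{The converse.} Describing the homogeneous elements of grade $\ell e\gamma$ is not enough, since primality must be tested against homogeneous elements of every grade. What you need is the following: every nonzero homogeneous element of $\mathcal{G}_w$ can be written uniquely as $\epsilon\,\operatorname{in}_w(\phi)^i P(\xi)$. Here $\epsilon$ is a homogeneous unit, $\xi=\operatorname{in}_w(\phi)^e\operatorname{in}_w(h)^{-1}$, and $P\in k_\alpha[y]$ is monic with $P(0)\neq 0$.
\begin{itemize}
\item Existence: for $g$, take $i=\min S_{w,\phi}(g)$ and apply your congruence observation.
\item Uniqueness: comparing grades forces the exponents of $\operatorname{in}_w(\phi)$ to differ by some $el$. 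Then $\xi^lP(\xi)=c\,Q(\xi)$ for a grade-zero unit $c$, hence $y^lP=cQ$ in $k_\alpha[y]$ by the transcendence of $\xi$ (Theorem \ref{dpth1}(iii)). Since $Q(0)\neq 0$, this forces $l=0$.
\end{itemize}
Primality of $\operatorname{in}_w(f)=\operatorname{in}_w(h)^mU(\xi)$ then follows from unique factorization in $k_\alpha[y]$. The condition $U(0)\neq 0$ is what makes $U(\xi)$ one of these normalized factors. This is in contrast with $\xi$ itself, which is $\operatorname{in}_w(\phi)^e$ up to a unit and is not prime when $e>1$.
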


\begin{theorem}(\cite[Theorem 1.2]{AN1})
Let $\theta,$ $\alpha$ be elements in the algebraic  closure $\overline{K}$ of a Henselian valued field $(K,v)$ with respective minimal polynomials $f,$ $\phi$ over $K.$ Suppose that $\deg f>\deg \phi.$ Let $e$ be the smallest positive integer such  that $e\bar{v}(\phi(\theta))\in\Gamma_\alpha$ with $e\bar{v}(\phi(\theta))=\bar{v}(h(\alpha)),$ for some $h\in K[x]$ with $\deg h<\deg \phi.$ Then  $(\theta,\alpha)$ is a $(K,v)$-distinguished pair if and only if $(\alpha,\bar{v}(\theta-\alpha))$ is a $(K,v)$-minimal pair and $f$ is a lifting of some monic irreducible polynomial $U(Y)\neq Y\in k_\alpha[Y]$ with respect to $(\alpha,\bar{v}(\theta-\alpha))$ and the polynomial $h.$
\end{theorem}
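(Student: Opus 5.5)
The approach is to translate both sides of the equivalence into statements about the residue-transcendental valuation $w=w_{\alpha,\delta}$, where $\delta=\bar{v}(\theta-\alpha)\in\Gamma_{\bar{v}}$. Theorem \ref{ppth} then supplies the link to liftings. Since $(K,v)$ is Henselian, every irreducible polynomial equals its Henselian factor, so $f=f^h$ and $\phi=\phi^h$. By Theorem \ref{t21}, every root of a key polynomial is then an optimizing root, and the common extension $\overline{w}$ is unique up to conjugation. The intermediate statement I aim for is
\[(\theta,\alpha)\ \text{distinguished}\iff (\alpha,\delta)\ \text{is minimal},\ f\in\operatorname{KP}(w_{\alpha,\delta})\ \text{and}\ \deg f>\deg\phi .\]
Given this, Theorem \ref{ppth} converts ``$f\in\operatorname{KP}(w)$ with $\deg f>\deg\phi$'' into ``$f$ is a nontrivial lifting of a monic irreducible $U(Y)\in k_\alpha[Y]$ with nonzero constant term''. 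Since $U$ is irreducible, a nonzero constant term is the same as $U\neq Y$.

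First I would check that the integer $e$ and the polynomial $h$ in the statement agree with those of Theorem \ref{dpth1}(iii). This amounts to showing $w(\phi)=\bar{v}(\phi(\theta))$. In the forward direction it follows from the remark after the definition of distinguished pairs, since $\deg\phi<\deg\theta$. In the converse direction I would use the following: $\theta$ lies in $B(\alpha,\delta)$, and $\overline{w}_{\alpha,\delta}=\overline{w}_{\theta,\delta}$ by Lemma \ref{il1}. Hence $w(\phi)=\overline{w}_{\theta,\delta}(\phi)$, and this equals $\bar{v}(\phi(\theta))$ because $\deg\phi<\deg f$. I would derive this last equality from the minimality of $(\theta,\delta')$ for $\delta'>\delta$, which is obtained below.

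Next I would match the three axioms of a distinguished pair with ball conditions. Axiom (iii), together with $\bar{v}(\theta-\eta)\ge\delta\iff\bar{v}(\alpha-\eta)\ge\delta$ for $\eta\in B(\theta,\delta)=B(\alpha,\delta)$, says exactly that $\alpha\in\min_K B(\alpha,\delta)$, i.e. that $(\alpha,\delta)$ is a minimal pair. Axiom (ii) says that every $\beta$ with $\bar{v}(\theta-\beta)>\delta$ has $\deg\beta\ge\deg\theta$, i.e. $\theta\in\min_K B^\circ(\theta,\delta)$. Axiom (i) then says $\theta\in B\setminus A$ in the notation of the last proposition of Section 4.

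The step I expect to be the main obstacle is proving that $\theta\in B\setminus A$ is equivalent to $f\in\operatorname{KP}(w)$ with $\deg f>\deg\phi$. For this I would invoke Nart's description of key polynomials as minimal polynomials of such elements (\cite[Proposition 5.1, Theorem 5.5]{EJP}), quoted before that proposition. In the converse direction, $f\in\operatorname{KP}(w)$ gives, via Theorem \ref{t2}, that $\theta$ is an optimizing root, so $\overline{w}(x-\theta)=\delta(f)=\delta$. It also gives that $(\theta,\delta')$ is minimal for $\delta'>\delta$: otherwise a polynomial of smaller degree would $w$-divide $f$, contradicting $w$-minimality via Lemma \ref{kpl1}. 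This yields axiom (ii), and axiom (i) is the degree hypothesis. A delicate point is keeping the radius exactly $\delta=\bar{v}(\theta-\alpha)$ rather than merely $\ge\delta$. Here Henselianity is used, since it makes the common extension essentially unique. Finally, I would note that nontriviality of the lifting is automatic from $\deg f>\deg\phi$.
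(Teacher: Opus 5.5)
Your plan follows the paper's route. The intermediate equivalence you aim for is exactly Theorem \ref{3.9}, with $\deg f>\deg\phi$ as a standing hypothesis. Theorem \ref{ppth} then turns $f\in\operatorname{KP}(w_{\alpha,\delta})$ into the lifting condition. Your checks on $e$, $h$, $U\neq Y$ and nontriviality are the right bookkeeping.

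The one argument you supply yourself does not work as written, however. Minimality of $(\theta,\delta')$ for $\delta'>\delta$ does not follow ``via Lemma \ref{kpl1}''. $w$-minimality of $f$ only constrains polynomials $g$ with $f\mid_w g$, not polynomials that $w$-divide $f$. Moreover, Lemma \ref{kpl1} characterizes $w$-minimal polynomials by degree and gives no divisibility relation between $f$ and the minimal polynomial of an offending $\beta\in B^\circ(\theta,\delta)$.

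Use the paper's argument instead. \cite[Theorem 5.5]{EJP} yields some root $\theta'$ of $f$ with $\theta'\in B(\alpha,\delta)\cap\min_K B^\circ(\theta',\delta)$. Since $K$ is Henselian, $\theta=\sigma(\theta')$ for some $\sigma$ with $\bar v\circ\sigma=\bar v$. This transports $\bar v(\theta'-\gamma)\le\delta$ for all $\gamma$ with $\deg\gamma<\deg\theta'$ over to $\theta$, and it is the precise content of your remark about Henselianity.

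Alternatively, you can salvage your idea with an augmentation. Take $w'=[w;f,\gamma]$ with $\gamma>w(f)$. By Henselianity, $\overline{w}_{\theta,\delta''}$ is a common extension of $w'$ and $\bar v$ for some $\delta''$, and $\delta''>\delta$ because $w'(f)>w(f)$. The minimal polynomial $g$ of such a $\beta$ would then satisfy $w(g)<w'(g)$. That is impossible, since $\deg g<\deg f$ forces $w'(g)=w(g)$.
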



 We now generalize the above result for an arbitrary valued field.
 \begin{theorem}\label{3.9}
 	Let $\theta,$ $\alpha$ be elements in the algebraic  closure $\overline{K}$ of a valued field $(K,v)$ with respective minimal polynomials $f,$ $\phi$ over $K.$ Suppose that $\deg f>\deg \phi.$ Then $(\theta,\alpha)$ is a $(K,v)$-distinguished pair if and only if for $\delta=\bar{v}(\theta-\alpha),$ $(\alpha,\delta)$ is a $(K,v)$-minimal pair and $f\in \operatorname{KP}(w),$ where $w={w}_{\alpha,\delta}.$	
 \end{theorem}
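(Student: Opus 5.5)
The plan is to reduce both directions to the ball description of key polynomials due to Nart \cite[Proposition 5.1 and Theorem 5.5]{EJP}, which the paper already uses in the proposition on depth. In the form I will use it: for $w=w_{\alpha,\delta}$ with $\delta\in\Gamma$, the key polynomials of $w$ are exactly the minimal polynomials over $K$ of elements $\theta\in B(\alpha,\delta)$ such that $\theta\in\min_K B^\circ(\theta,\delta)$. Such a $\theta$ is then an optimizing root, so $\delta(f)=\delta$. Throughout, $\delta=\bar v(\theta-\alpha)$, so $\theta\in B(\alpha,\delta)$ and $\overline{w}_{\alpha,\delta}=\overline{w}_{\theta,\delta}$ by Lemma \ref{il1}. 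Condition (i) of a distinguished pair is the standing hypothesis $\deg f>\deg\phi$.

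\emph{($\Rightarrow$)} Assume $(\theta,\alpha)$ is distinguished.
\begin{enumerate}[(a)]
\item $(\alpha,\delta)$ is minimal: if $\bar v(\beta-\alpha)\ge\delta$ with $\deg\beta<\deg\alpha$, then $\bar v(\theta-\beta)\ge\min\{\bar v(\theta-\alpha),\bar v(\alpha-\beta)\}\ge\delta$. This contradicts condition (iii).
\item $f\in\operatorname{KP}(w)$: if $\bar v(\theta-\beta)>\delta$, condition (ii) forces $\deg\beta\ge\deg\theta$. Since $\theta$ itself lies in $B^\circ(\theta,\delta)$, we get $\theta\in\min_K B^\circ(\theta,\delta)$. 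The ball description then gives $f\in\operatorname{KP}(w)$.
\end{enumerate}

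\emph{($\Leftarrow$)} Assume $(\alpha,\delta)$ is minimal and $f\in\operatorname{KP}(w)$.
\begin{enumerate}[(a)]
\item $\theta$ is an optimizing root of $f$: by Theorem \ref{t2}, every common extension of $w$ and $\bar v$ equals $\overline{w}_{\beta,\delta(f)}$ for some root $\beta$ of $f^h$. One such extension is $\overline{w}_{\alpha,\delta}$, so $\delta(f)=\delta$. Since $\overline{w}_{\alpha,\delta}(x-\theta)=\delta=\delta(f)$, $\theta$ is optimizing, i.e.\ $\theta\in Z(f^h)$ by Theorem \ref{t21}.
\item Condition (iii): if $\deg\eta<\deg\alpha$ and $\bar v(\theta-\eta)\ge\delta$, then $\bar v(\alpha-\eta)\ge\delta$. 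This contradicts minimality of $(\alpha,\delta)$.
\item Condition (ii): the maximum $\max\{\bar v(\theta-\beta)\mid\deg\beta<\deg\theta\}$ is at least $\delta$, since $\alpha$ is a candidate. So it suffices to show that $\bar v(\theta-\beta)>\delta$ forces $\deg\beta\ge\deg\theta=\deg f$.
\end{enumerate}

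Step (c) is the main obstacle. The clean route is the converse half of Nart's description: every optimizing root $\theta$ of a key polynomial satisfies $\theta\in\min_K B^\circ(\theta,\delta)$.

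If that cannot be quoted in exactly this form, I would argue directly. Suppose $\bar v(\theta-\beta)=\delta'>\delta$ with $\deg\beta<\deg f$, and let $g$ be the minimal polynomial of $\beta$. Put $\rho=w_{\theta,\delta'}$; then $w<\rho$, because $B(\theta,\delta')\subsetneq B(\theta,\delta)$. Every root of $f$ outside $B(\theta,\delta)$ contributes equally to $w$ and $\rho$, while the root $\theta$ contributes strictly more under $\rho$. Hence $w(f)<\rho(f)$, and likewise $w(g)<\rho(g)$ using the root $\beta$.

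Now $f$ is a key polynomial, so by the standard fact about tangent directions (\cite[Theorem 1.15]{V}, together with $\Phi(w,\rho)\subset\operatorname{KP}(w)$), the tangent direction $\Phi(w,\rho)$ has degree at most $\deg g<\deg f$. That degree must still exceed $\deg\phi$, since $w(a)=\bar v(a(\theta))=\rho(a)$ whenever $\deg a<\deg\phi$. I would then compare the $j$-invariants of $f$ and of an element $Q\in\Phi(w,\rho)$, using Proposition \ref{dp1} and Lemma \ref{kpl1}. The goal is to show that $f$, being $w$-irreducible with its root $\theta$ in the same $\rho$-ball, is $w$-equivalent to a multiple of $Q$, contradicting $w$-minimality of $f$.
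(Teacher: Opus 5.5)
Your forward direction and steps (a), (b) of the converse are correct. For the forward direction, the paper does not use the ball description. It checks directly that, for $\rho=w_{\theta,\delta'}$ with $\delta'>\delta$, condition (ii) gives $w(g)=\bar v(g(\theta))=\rho(g)$ whenever $\deg g<\deg f$, while $w(f)<\rho(f)$. Hence $f\in\Phi(w,\rho)\subset\operatorname{KP}(w)$.

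\textbf{The gap is step (c).} This step carries all the content of the converse, and neither of your two routes closes it. As the paper uses it, \cite[Theorem 5.5]{EJP} only produces \emph{some} root $\theta'$ of $f$ with $\theta'\in B(\alpha,\delta)\cap\min_K B^\circ(\theta',\delta)$, not the given $\theta$. You yourself doubt that the ``every optimizing root'' version can be quoted. The missing step is the transfer from $\theta'$ to $\theta$, and you already proved its key input in (a) without using it afterwards. Both $\theta$ and $\theta'$ lie in $B(\alpha,\delta)$, so both lie in $Z(f^h)$ by Theorem \ref{t21}. Hence $\theta=\sigma(\theta')$ for some $\sigma\in\text{Aut}(\overline{K}|K^h)$. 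Since $\bar v\circ\sigma=\bar v$ and $\sigma$ preserves degrees over $K$, every $\gamma$ with $\deg\gamma<\deg\theta$ satisfies $\bar v(\theta-\gamma)=\bar v(\theta'-\sigma^{-1}(\gamma))\le\delta$. This is exactly how the paper concludes.

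\textbf{Your fallback, as written, does not reach a contradiction.} The claim $\deg\Phi(w,\rho)>\deg\phi$ does not follow from $w(a)=\rho(a)$ for $\deg a<\deg\phi$; that only gives $\ge$, and the claim is not needed anyway. The $j$-invariant comparison is only a stated goal. Moreover, ``$f$ is $w$-equivalent to a multiple of $Q$'' means $Q\mid_w f$, which by itself does not contradict the $w$-minimality of $f$.

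The setup is sound, however, and finishes in a few lines without $j$-invariants:
\begin{enumerate}[(1)]
\item By the full statement of \cite[Theorem 1.15]{V}, $w(f)<\rho(f)$ yields $Q\mid_w f$ for $Q\in\Phi(w,\rho)$; say $f\sim_w Qh$.
\item Then $f\mid_w Qh$, so $w$-irreducibility of $f$ gives $f\mid_w Q$ or $f\mid_w h$.
\item If $f\mid_w h$, then $\text{in}_w(Q)$ is a unit in the domain $\mathcal{G}_w$, i.e.\ $Q\mid_w 1$, contradicting the $w$-minimality of $Q$.
\item If $f\mid_w Q$, this contradicts the $w$-minimality of $f$, since $\deg Q\le\deg g<\deg f$.
\end{enumerate}

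Completed this way, your route is a genuinely different proof of the converse. It shows directly that every root of $f$ in $B(\alpha,\delta)$ has minimal degree in its open ball. It uses only Lemma \ref{il1} (to write $w=w_{\theta,\delta}$) and Vaqui\'e's theorem, with no appeal to \cite{EJP}, to Henselization, or to your step (a).
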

\begin{proof}
Suppose first that $(\theta,\alpha)$ is a $(K,v)$-distinguished pair, then for $\delta=\bar{v}(\theta-\alpha)$ and $\delta'>\delta,$ $(\alpha,\delta)$ and $(\theta,\delta')$ are $(K,v)$-minimal pairs. Let $w=w_{\alpha,\delta}$ and $w'=w_{\theta,\delta'},$ then $\phi$ is a minimal degree key polynomial for $w.$ As $\delta'>\delta,$ we have $B(\theta,\delta')\subset B(\alpha,\delta),$ which implies that $\overline{w}_{\alpha,\delta}<\overline{w}_{\theta,\delta'}.$
	Since $(\theta,\delta')$ is a minimal pair of definition for $\overline{w}_{\theta,\delta'},$ by \cite[Lemma 3.6]{W}, we have $w'(g)=\overline{w}(g)=\bar{v}(g(\theta))$ for any polynomial $g\in K[x]$ of degree less than $\deg f.$ Also as $(\theta,\alpha)$ is a distinguished pair, $w(g)=\bar{v}(g(\theta)),$ therefore we have $w(g)=w'(g).$ Let $f=\prod_{i=1}^{m}(x-\theta_i),$  where $\theta_i$ runs over all roots of $f$ counted with multiplicity. Then
	\[w(f)=\sum_{i=1}^m\overline{w}_{\alpha,\delta}(x-\theta_i),\ \ \text{and}\  \ 
	w'(f)=\sum_{i=1}^m\overline{w}_{\theta,\delta'}(x-\theta_i).\]
	As $\overline{w}_{\alpha,\delta}(x-\theta_i)\leq \overline{w}_{\theta,\delta'}(x-\theta_i)$ for every $\theta_i \in Z(f)$ and $ \overline{w}_{\alpha,\delta}(x-\theta)=\delta<\delta'= \overline{w}_{\theta,\delta'}(x-\theta),$ we have $ w(f)<w'(f),$ hence $f\in \Phi(w,w').$ Since $\Phi(w,w')\subset \operatorname{KP}(w),$ therefore, $f$ is a key polynomial for $w.$
	
	Conversely assume that $(\alpha,\delta)$ is a minimal pair, $w=w_{\alpha,\delta},$  and $f\in \operatorname{KP}(w).$ It follows from Theorem 5.5 of \cite{EJP} that there exist $\theta'\in Z(f)$ such that $\theta'\in B(\alpha,\delta) \cap\min_K(B^\circ(\theta',\delta)),$ i.e., $\bar{v}(\theta'-\alpha)\geq \delta$ and 
	\begin{align}\label{dpeq1}
		\bar{v}(\theta'-\gamma)\leq \delta, \ \forall \ \gamma\in\overline{K}, \ \text{with} \ \deg \gamma <\deg\theta'.\end{align}
	Since both $\theta,\theta'\in B(\alpha,\delta)$ we have $(\theta,\delta)$ and $(\theta',\delta)$ are pairs of definition for $w,$ which implies that $\theta,\theta'\in Z(f^h)$ (by Theorem \ref{t21}). Hence, $\theta$ is a $K^h$-conjugate of $\theta',$ i.e., $\theta = \sigma(\theta')$ for some $\sigma\in\text{ Aut}(\overline{K}|K^h).$ Now for any $\gamma\in \overline{K}$ with $\deg \gamma<\deg\theta,$ we have
	\[\bar{v}(\theta-\gamma)=\bar{v}(\sigma(\theta')-\gamma)=\bar{v}\circ\sigma(\theta'-\sigma^{-1}(\gamma))=\bar{v}(\theta'-\sigma^{-1}(\gamma))\leq \delta,\] where the last inequality follows from (\ref{dpeq1}). Therefore $\theta \in \min_KB^\circ(\theta,\delta)$. Using the fact that $(\alpha,\delta)$ is a minimal pair, it can be easily verified that $\alpha$ is of the least degree in $\overline{K}$ such that
	\[\bar{v}(\theta-\alpha)=\max\{\bar{v}(\theta-\gamma)\mid \gamma \in \overline{K}, \ \deg \gamma < \deg \theta\}.\] Hence, $(\theta,\alpha)$ is a $(K,v)$-distinguished pair.
	\end{proof}

Arguing as in the above proof, it can be easily shown that 
\begin{corollary}
	Let $w$ be a residue-transcendental extension, $\phi, f\in \operatorname{KP}(w)$ such that $\phi$ is of minimal degree and $\deg f>\deg \phi.$ Then  $(\theta',\alpha')$ is a $(K,v)$-distinguished pair, for every optimizing roots  $\theta',\alpha'$ of $f$ and $\phi$ respectively with respect to some common extension of $w$ and $\bar{v}.$
	\end{corollary}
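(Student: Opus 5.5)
Fix a common extension $\overline{w}$ of $w$ and $\bar{v}$, and let $\theta'$ and $\alpha'$ be optimizing roots of $f$ and $\phi$ with respect to $\overline{w}$. By Theorem \ref{t2}, $\overline{w}=\overline{w}_{\alpha',\delta}$ with $\delta=\delta(\phi)=\delta(f)$. Since $\theta'$ is an optimizing root of $f$, we have $\overline{w}(x-\theta')=\delta$, so $\bar{v}(\theta'-\alpha')\geq\delta$. By Lemma \ref{il1}, $(\theta',\delta)$ is then also a pair of definition for $\overline{w}$. I first want to show that $\bar{v}(\theta'-\alpha')=\delta$ exactly and that $(\alpha',\delta)$ is a $(K,v)$-minimal pair. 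Once these hold, the converse direction of Theorem \ref{3.9}, applied with $\theta=\theta'$, $\alpha=\alpha'$ and $f\in\operatorname{KP}(w)=\operatorname{KP}(w_{\alpha',\delta})$, gives the conclusion directly, since $\deg f>\deg\phi$.

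\textbf{Minimality of $(\alpha',\delta)$.} Suppose $\beta\in B(\alpha',\delta)$ with $\deg\beta<\deg\phi$. Then $\overline{w}_{\beta,\delta}=\overline{w}$ by Lemma \ref{il1}. Let $g$ be the minimal polynomial of $\beta$; then $\overline{w}(g)\geq\overline{w}(x-\beta)+\cdots$. Arguing as in Theorem \ref{dpth1}(i) fails here, so instead I use the standard fact (\cite[Theorem 1.1]{JN}, quoted in the preliminaries) that, for a minimal pair of definition $(\beta_0,\delta)$ of $\overline{w}$, the minimal polynomial of $\beta_0$ is a minimal-degree key polynomial. Replacing $(\alpha',\delta)$ by a minimal pair of definition gives a key polynomial of degree at most $\deg\beta<\deg\phi$, contradicting the minimality of $\deg\phi$. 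Hence $(\alpha',\delta)$ is minimal.

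\textbf{The equality $\bar{v}(\theta'-\alpha')=\delta$.} This is where I expect the real work. I will reproduce the converse half of the proof of Theorem \ref{3.9}. By \cite[Theorem 5.5]{EJP} there is a root $\theta''$ of $f$ in $B(\alpha',\delta)\cap\min_K B^\circ(\theta'',\delta)$. Both $\theta'$ and $\theta''$ lie in $Z(f^h)$ by Theorem \ref{t21}, so $\theta'=\sigma(\theta'')$ for some $\sigma\in\operatorname{Aut}(\overline{K}|K^h)$. Because $\bar{v}\circ\sigma=\bar{v}$, this gives $\bar{v}(\theta'-\gamma)\leq\delta$ for every $\gamma$ with $\deg\gamma<\deg f$. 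Taking $\gamma=\alpha'$ (allowed since $\deg\alpha'=\deg\phi<\deg f$) yields $\bar{v}(\theta'-\alpha')\leq\delta$, hence equality. The same inequality shows that $\delta$ is the maximum of $\bar{v}(\theta'-\gamma)$ over all $\gamma$ of degree less than $\deg f$, and that maximum is attained at $\alpha'$.

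\textbf{Condition (iii) of the definition.} Let $\eta$ satisfy $\deg\eta<\deg\alpha'$. If $\bar{v}(\theta'-\eta)\geq\delta$, then $\bar{v}(\alpha'-\eta)\geq\delta$, which contradicts the minimality of $(\alpha',\delta)$. So $\bar{v}(\theta'-\eta)<\delta$, and $(\theta',\alpha')$ is a $(K,v)$-distinguished pair. The main obstacle is the transfer via $\sigma$: it requires the key fact that all optimizing roots of $f$ form a single $K^h$-conjugacy class, which is exactly what Theorem \ref{t21} provides.
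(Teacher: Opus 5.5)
Your proof is correct and follows essentially the paper's route. The paper simply says to argue as in the converse half of Theorem~\ref{3.9}: use \cite[Theorem 5.5]{EJP} to get a root $\theta''\in B(\alpha',\delta)\cap\min_K B^\circ(\theta'',\delta)$ of $f$, then transfer the bound $\bar{v}(\theta''-\gamma)\leq\delta$ to $\theta'$ through some $\sigma\in\operatorname{Aut}(\overline{K}|K^h)$, which exists because both roots lie in $Z(f^h)$ by Theorem~\ref{t21}. You also make explicit two points the paper leaves implicit, namely that $(\alpha',\delta)$ is a minimal pair and that $\bar{v}(\theta'-\alpha')=\delta$ exactly; you should, however, delete the abandoned fragment about $\overline{w}(g)\geq\overline{w}(x-\beta)+\cdots$ in your minimality paragraph.
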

	\begin{corollary}
		 	Let $K, v, w, \phi,$ and $f$ be as in the above corollary and assume that $\phi=\phi^h.$ Then for every $\theta \in Z(f)$ there exists an $\alpha \in Z(\phi)$ such that $(\theta,\alpha)$ is a
		$(K,v)$-distinguished pair, and vice-versa.
	\end{corollary}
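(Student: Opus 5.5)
The plan is to combine the previous corollary with the unibranched hypothesis, which turns all roots of $f$ and of $\phi$ into optimizing roots for a suitable common extension.

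\emph{Step 1: unibranchedness of $f$.} Since $\phi=\phi^h$, Proposition \ref{bp3}(ii) gives $f=f^h$. By Theorem \ref{t21}, every root of $f$ and every root of $\phi$ is then an optimizing root.

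\emph{Step 2: forward direction.} Fix $\theta\in Z(f)$ and put $\delta=\delta(f)=\delta(\phi)$; the two optimizing values agree by Theorem \ref{t2}. By Theorem \ref{t2}, $\overline{w}:=\overline{w}_{\theta,\delta}$ is a common extension of $w$ and $\bar{v}$. Since $\phi=\phi^h$ and $\phi\in\operatorname{KP}(w)$, Theorem \ref{t2} also gives $\overline{w}=\overline{w}_{\alpha,\delta}$ for some $\alpha\in Z(\phi^h)=Z(\phi)$. Then $\overline{w}(x-\theta)=\delta$ and $\overline{w}(x-\alpha)=\delta$, so $\theta$ and $\alpha$ are optimizing roots of $f$ and $\phi$ with respect to the same common extension $\overline{w}$. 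The preceding corollary now says that $(\theta,\alpha)$ is a $(K,v)$-distinguished pair.

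\emph{Step 3: converse.} Fix $\alpha\in Z(\phi)$. Again $\overline{w}_{\alpha,\delta}$ is a common extension by Theorem \ref{t2}, since $\alpha\in Z(\phi^h)$. Applying Theorem \ref{t2} to $f$ gives $\overline{w}_{\alpha,\delta}=\overline{w}_{\theta,\delta}$ for some $\theta\in Z(f^h)=Z(f)$. Both $\alpha$ and $\theta$ are then optimizing roots for this same extension, and the preceding corollary yields that $(\theta,\alpha)$ is distinguished.

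\emph{Main obstacle.} The delicate point is reading the preceding corollary as requiring both roots to be optimizing for one and the same common extension. That corollary is stated loosely. If needed, I would re-derive it directly along the lines of the proof of Theorem \ref{3.9}. From $\bar{v}(\theta-\alpha)\ge\delta$ and $\overline{w}_{\alpha,\delta}(x-\theta)=\delta$ we get $\bar{v}(\theta-\alpha)=\delta$ exactly, using Lemma \ref{il1}. Since $\phi$ has minimal degree and $(\alpha,\delta)$ can be taken as a minimal pair of definition, Theorem \ref{3.9} applies with $f\in\operatorname{KP}(w_{\alpha,\delta})$, which makes $(\theta,\alpha)$ distinguished.
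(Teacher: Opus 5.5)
Your argument is correct and is the one the paper leaves implicit: $\phi=\phi^h$ forces $f=f^h$ by Proposition~\ref{bp3}(ii), so by Theorem~\ref{t21} every root of either polynomial is optimizing for some common extension. Theorem~\ref{t2}, applied to the other key polynomial with the same $\delta$, writes that extension as $\overline{w}_{\beta,\delta}$ for a root $\beta$ of the other polynomial that is optimizing for the same extension, and the preceding corollary then gives the distinguished pair in both directions.

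Only your optional fallback has a slip. The equality $\overline{w}_{\alpha,\delta}(x-\theta)=\delta$ yields just $\bar{v}(\theta-\alpha)\ge\delta$, and Lemma~\ref{il1} does not upgrade this to equality. The equality $\bar{v}(\theta-\alpha)=\delta$ is true, but it comes from the $K^h$-conjugacy argument via Nart's Theorem 5.5 in the proof of Theorem~\ref{3.9}, that is, from $f$ being a key polynomial of degree larger than $\deg\phi$.
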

	
	Note that the forward part of Theorem \ref{3.9} implies that if $(\theta,\alpha)$ is a $(K,v)$-distinguished pair, then there exist a valuation $w$ on $K(x)$  such that their respective minimal polynomials $f,$ $\phi$ over $K$ are key polynomials for $w$ and $\phi$ is of minimal degree. The same is also proved in \cite[Lemma 3.1]{AR}, with a different approach.
	
	To prove the next results of this section, we need the following three lemmas; the first is well known (\cite[Lemma 2.1(ii)]{SK}), the second is analogous to the first, and the third is a slight generalization of Lemma 2.3 of \cite{AK2}.
	
	\begin{lemma}\label{dpl12}
		Let $\overline{w}_{\alpha,\delta}$ be a valuation on $\overline{K}[x]$ defined by $(K,v)$-minimal pair $(\alpha,\delta).$  If $f\in K[x]$ is a polynomial such that for each root $\beta$ of $f,$ $\bar{v}(\alpha-\beta)<\delta,$ then $\overline{w}_{\alpha,\delta}(f(x)-f(\alpha))>\bar{v}(f(\alpha)).$	
	\end{lemma}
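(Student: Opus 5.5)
Since $\overline{w}_{\alpha,\delta}$ is multiplicative and $f$ has coefficients in $K$, I would factor $f$ over $\overline{K}$ as $f=c\prod_{i}(x-\beta_i)$ and compare $f(x)$ with $f(\alpha)=c\prod_i(\alpha-\beta_i)$ factor by factor. For each root $\beta_i$ write $x-\beta_i=(x-\alpha)+(\alpha-\beta_i)$. Then $\overline{w}_{\alpha,\delta}(x-\beta_i)=\min\{\delta,\bar{v}(\alpha-\beta_i)\}=\bar{v}(\alpha-\beta_i)$, because $\bar{v}(\alpha-\beta_i)<\delta$ by hypothesis. Moreover
\[\overline{w}_{\alpha,\delta}\bigl((x-\beta_i)-(\alpha-\beta_i)\bigr)=\overline{w}_{\alpha,\delta}(x-\alpha)=\delta>\bar{v}(\alpha-\beta_i),\]
so $x-\beta_i\sim_{\overline{w}_{\alpha,\delta}}\alpha-\beta_i$, i.e.\ $\text{in}(x-\beta_i)=\text{in}(\alpha-\beta_i)$ in the graded algebra of $\overline{w}_{\alpha,\delta}$ on $\overline{K}[x]$.

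Next I would use that $w$-equivalence is multiplicative, since the graded algebra is an integral domain and $\text{in}$ is multiplicative. Multiplying these equivalences gives $\text{in}(f(x))=\text{in}\bigl(c\prod_i(\alpha-\beta_i)\bigr)=\text{in}(f(\alpha))$. Equivalently, I could prove directly by induction on the number of factors that if $\overline{w}(a_i-b_i)>\overline{w}(a_i)=\overline{w}(b_i)$ for each $i$, then $\overline{w}(\prod a_i-\prod b_i)>\overline{w}(\prod b_i)$, using the telescoping identity $a_1a_2-b_1b_2=(a_1-b_1)a_2+b_1(a_2-b_2)$. Since $\bar{v}(f(\alpha))=\overline{w}_{\alpha,\delta}(f(\alpha))$, as constants are valued by $\bar{v}$, this yields $\overline{w}_{\alpha,\delta}(f(x)-f(\alpha))>\bar{v}(f(\alpha))$.

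The only point needing care is that $f(\alpha)\neq 0$ and that the values involved are finite. Both are immediate: $\alpha$ is not a root of $f$, since otherwise $\bar{v}(\alpha-\alpha)=\infty<\delta$ would fail. The minimality of the pair is not actually needed in this argument; it is part of the standing setting. I expect no real obstacle beyond verifying the multiplicativity of strict equivalence, which is the routine induction above.
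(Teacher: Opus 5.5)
Your proof is correct and complete. Each linear factor $x-\beta_i$ is $\overline{w}_{\alpha,\delta}$-equivalent to the constant $\alpha-\beta_i$, and multiplying these equivalences gives the claim. Equivalently, $f(x)/f(\alpha)=\prod_i\bigl(1+\frac{x-\alpha}{\alpha-\beta_i}\bigr)$, and every factor has residue $1$.

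The paper gives no proof of its own; it quotes the lemma as well known from \cite[Lemma 2.1(ii)]{SK}, and your argument is the standard one. You are also right that minimality of $(\alpha,\delta)$ plays no role in the lemma itself. The paper uses minimality only in the proof of Theorem~\ref{dpmt}, to check the root hypothesis for polynomials of degree less than $\deg\phi$ before applying the lemma.
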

	
	\begin{lemma}\label{3.12}
		Let $(\alpha,\delta)$ be a $(K,v)$-minimal pair and $\theta\in \overline{K}$ with $\bar{v}(\theta-\alpha)\geq\delta.$ Let $f\in K[x]$ be a polynomial such that for each root $\gamma$ of $f,$ $\bar{v}(\alpha-\gamma)<\delta.$ Then $\bar{v}(f(\theta)-f(\alpha))>\bar{v}(f(\alpha)).$
	\end{lemma}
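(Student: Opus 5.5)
We prove Lemma \ref{3.12} by reducing it to Lemma \ref{dpl12}, evaluating at $\theta$ instead of working inside $\overline{w}_{\alpha,\delta}$. We may assume $f$ is nonzero, and we factor it over $\overline{K}$ as $f=c\prod_{i=1}^{m}(x-\gamma_i)$ with $c\in K^\times$. By hypothesis $\bar{v}(\alpha-\gamma_i)<\delta$ for each $i$.

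The first step is to show that $f(\theta)$ and $f(\alpha)$ have the same value. Since $\bar{v}(\theta-\alpha)\geq\delta>\bar{v}(\alpha-\gamma_i)$, the strict triangle inequality gives
\[\bar{v}(\theta-\gamma_i)=\bar{v}\bigl((\theta-\alpha)+(\alpha-\gamma_i)\bigr)=\bar{v}(\alpha-\gamma_i)\]
for every $i$. Multiplying over $i$, we get $\bar{v}(f(\theta))=\bar{v}(f(\alpha))$.

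The second step shows that the difference has strictly larger value. We compare ratios rather than differences:
\[\frac{f(\theta)}{f(\alpha)}=\prod_{i=1}^m\frac{\theta-\gamma_i}{\alpha-\gamma_i}=\prod_{i=1}^m\Bigl(1+\frac{\theta-\alpha}{\alpha-\gamma_i}\Bigr).\]
Each factor is $1+u_i$ with $\bar{v}(u_i)=\bar{v}(\theta-\alpha)-\bar{v}(\alpha-\gamma_i)>0$. Hence the product equals $1+u$ with $\bar{v}(u)>0$, because every term in the expansion other than $1$ is a product of some of the $u_i$ and so has positive value. Therefore
\[\bar{v}\bigl(f(\theta)-f(\alpha)\bigr)=\bar{v}(f(\alpha))+\bar{v}(u)>\bar{v}(f(\alpha)),\]
which is the claim.

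The argument is elementary and never uses minimality of $(\alpha,\delta)$; the only delicate point is that $\delta$ might lie outside $\Gamma_{\bar{v}}$. This does not matter, since every value compared lies in the common ordered group $\Gamma$ and the inequalities are strict. Alternatively, one could use Lemma \ref{dpl12}, together with the fact that $\overline{w}_{\alpha,\delta}(g)\leq\bar{v}(g(\theta))$ for every $g\in\overline{K}[x]$ when $\bar{v}(\theta-\alpha)\geq\delta$, applied to $g=f(x)-f(\alpha)$. The direct computation above avoids that detour.
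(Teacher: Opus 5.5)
Your argument is correct and is essentially what the paper intends. The paper gives no separate proof; it only says the lemma is analogous to Lemma \ref{dpl12}, and your factorization $f(\theta)/f(\alpha)=\prod_i\bigl(1+(\theta-\alpha)/(\alpha-\gamma_i)\bigr)$, in which each correction term has positive value, is exactly the standard proof of that lemma with $x$ specialized to $\theta$. Your alternative deduction from Lemma \ref{dpl12} via $\overline{w}_{\alpha,\delta}(g)\leq\bar{v}(g(\theta))$ is also valid, and you are right that minimality of $(\alpha,\delta)$ plays no role. One small point: your opening phrase ``reducing it to Lemma \ref{dpl12}'' does not describe the direct computation you actually carry out.
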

	
	\begin{lemma}\label{3.13}
		Let $f$ and $g$ be two monic irreducible polynomials over $K$ of degree $m$ and $n$ respectively, such that $f(\alpha)=g(\beta)=0.$ If $K(\alpha)|K$ and $K(\beta)|K$ are both unibranched, then $n\bar{v}(f(\beta))=m\bar{v}(g(\alpha)).$
	\end{lemma}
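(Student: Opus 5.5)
We prove Lemma \ref{3.13} by computing $\bar{v}(f(\beta))$ and $\bar{v}(g(\alpha))$ as sums over conjugates, and using unibranchedness to make all conjugates contribute equally. Write $f=\prod_{i=1}^m(x-\alpha_i)$ and $g=\prod_{j=1}^n(x-\beta_j)$ over $\overline{K}$, with roots counted with multiplicity. Then
\[f(\beta)=\prod_{i}(\beta-\alpha_i),\qquad g(\alpha)=\prod_j(\alpha-\beta_j)=(-1)^{mn}\prod_j(\beta_j-\alpha).\]
Consider also the resultant
\[R=\prod_{i,j}(\beta_j-\alpha_i)=\prod_j f(\beta_j)=\pm\prod_i g(\alpha_i)\in K.\]

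The first step is to show that $\bar{v}(f(\beta_j))$ does not depend on $j$. Since $K(\beta)|K$ is unibranched, Remark \ref{r21} shows that $g$ is irreducible over $K^h$. Hence every $\beta_j$ equals $\sigma(\beta)$ for some $\sigma\in\text{Aut}(\overline{K}|K^h)$. This covers the inseparable case as well: the automorphisms of $\overline{K}|K$ act transitively on the roots of $g$, and those fixing $K^h$ still act transitively on the roots of $g$, because $g$ is irreducible over $K^h$.

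For such $\sigma$ we have $\bar{v}\circ\sigma=\bar{v}$, and $f$ has coefficients in $K$. Therefore
\[\bar{v}(f(\beta_j))=\bar{v}(\sigma(f(\beta)))=\bar{v}(f(\beta)).\]
It follows that $\bar{v}(R)=n\,\bar{v}(f(\beta))$.

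The second step is the symmetric argument with the roles of $f$ and $g$ exchanged. Since $K(\alpha)|K$ is unibranched, $f$ is irreducible over $K^h$, so the same reasoning gives $\bar{v}(g(\alpha_i))=\bar{v}(g(\alpha))$ for all $i$. Hence $\bar{v}(R)=m\,\bar{v}(g(\alpha))$.

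Comparing the two expressions for $\bar{v}(R)$ gives $n\bar{v}(f(\beta))=m\bar{v}(g(\alpha))$. If $f=g$, both sides are $\infty$, so the identity holds trivially in that case too.

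The only delicate point is the transitivity of $\text{Aut}(\overline{K}|K^h)$ on the roots of a $K$-irreducible polynomial that stays irreducible over $K^h$. This is standard: any $K^h$-embedding of $K^h(\beta)$ into $\overline{K}$ extends to an automorphism of $\overline{K}$, and that automorphism fixes $K^h$. Together with the invariance $\bar{v}\circ\sigma=\bar{v}$ recorded in the preliminaries, this completes the argument.
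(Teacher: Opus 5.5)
Your proposal is correct and follows the paper's own proof: unibranchedness gives irreducibility of $f$ and $g$ over $K^h$, hence $\bar{v}(f(\beta_j))$ and $\bar{v}(g(\alpha_i))$ are constant via $\bar{v}\circ\sigma=\bar{v}$ for $\sigma\in\text{Aut}(\overline{K}|K^h)$, and the two are compared through the resultant identity $\prod_j f(\beta_j)=\pm\prod_i g(\alpha_i)$. One cosmetic slip, irrelevant after taking valuations: $\prod_j(\alpha-\beta_j)=(-1)^{n}\prod_j(\beta_j-\alpha)$, not $(-1)^{mn}\prod_j(\beta_j-\alpha)$.
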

	\begin{proof}
		Let $f=\prod_{i=1}^m(x-\alpha_i)$ and $g=\prod_{j=1}^n(x-\beta_j),$ where $\alpha_1=\alpha,$ $\beta_1=\beta$ and $\alpha_i, \beta_j$ runs over all roots of $f$ and $g$ respectively, counted with multiplicity. Since $K(\alpha)|K$ and $K(\beta)|K$ are both unibranched, it follows that $f$ and $g$ are irreducible over $K^h,$ which further implies that all $\alpha_i$'s are $K^h$-conjugate of $\alpha$ and $\beta_j$'s are $K^h$-conjugate of $\beta.$ Hence, for any $j,\ 1\leq j\leq n,$ $\beta_j=\sigma(\beta)$ for some $\sigma\in \text{Aut}(\overline{K}|K^h),$ and hence, \[\bar{v}(f(\beta_j))=\bar{v}(f(\sigma(\beta)))=\bar{v}\circ \sigma(f(\beta))=\bar{v}(f(\beta)).\] Similarly, for all $i,\ 1\leq i\leq m,$ we have $\bar{v}(g(\alpha_i))=\bar{v}(g(\alpha)).$	Keeping in mind that \[
			\prod_{j=1}^{n}f(\beta_j)=\pm	\prod_{i=1}^{m}g(\alpha_i),\]
			we have  \[\bar{v}(	\prod_{j=1}^{n}f(\beta_j))=\bar{v}(\prod_{i=1}^{m}g(\alpha_i)).\]
			Hence,\[ \sum_{j=1}^n\bar{v}(f(\beta_j))=\sum_{i=1}^m\bar{v}(g(\alpha_i)),\]
			which implies that $n\bar{v}(f(\beta))=m\bar{v}(g(\alpha)).$
		\end{proof}
	
	\begin{theorem}\label{dpmt}
	Let $\theta,$ $\alpha$ be elements in the algebraic  closure $\overline{K}$ of a valued field $(K,v)$ with respective minimal polynomials $f,$ $\phi$ over $K.$ Suppose that $\deg f>\deg \phi,$ $K(\theta)|K,$ and $K(\alpha)|K$ are both unibranched. Then $(\theta,\alpha)$ is a $(K,v)$-distinguished pair if and only if the following four conditions are satisfied.
	\begin{enumerate}[(i)]
		\item $(\alpha,\bar{v}(\theta-\alpha))$ is a minimal pair, and $\bar{v}(\theta-\alpha_i)\leq\bar{v}(\theta-\alpha)$ for all $K$-conjugates $\alpha_i$ of $\alpha,$
	
		\item $\Gamma_\theta=\Gamma_\alpha+\mathbb{Z}\bar{v}(\phi(\theta)),$
		\item $k_\theta=k_\alpha\left(\left(\frac{\phi(\theta)^e}{h(\alpha)}\right)^*\right),$ where $e$ is the smallest positive integer such that $e\bar{v}(\phi(\theta))=\bar{v}(h(\alpha))\in \Gamma_\alpha,$ for some $h\in K[x]$ with $\deg h<\deg \phi,$
			\item $d(K(\theta)|K)=d(K(\alpha)|K).$
	\end{enumerate}
	\end{theorem}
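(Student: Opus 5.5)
The plan is to reduce everything to Theorem \ref{3.9}, which says that $(\theta,\alpha)$ is distinguished if and only if $(\alpha,\delta)$ is a minimal pair and $f\in\operatorname{KP}(w)$, where $\delta=\bar v(\theta-\alpha)$ and $w=w_{\alpha,\delta}$. Throughout I work with the residue-transcendental valuation $w$ and its minimal degree key polynomial $\phi$. Since $K(\alpha)|K$ is unibranched, $\phi=\phi^h$, and Proposition \ref{bp3} then gives $f=f^h$ for every key polynomial $f$ of $w$.

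\emph{Forward direction.} Assume $(\theta,\alpha)$ is distinguished. Theorem \ref{3.9} gives the minimal pair condition and $f\in\operatorname{KP}(w)$. The inequality $\bar v(\theta-\alpha_i)\le\bar v(\theta-\alpha)$ is immediate from condition (ii) of the definition, since $\deg\alpha_i=\deg\alpha<\deg\theta$. So (i) holds.

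For (ii) and (iii), I use the augmentation $w'=[w;f,\infty]=v_f$, constructed as in the proof of Theorem \ref{odt2}. Here $\theta$ is an optimizing root of $f$: the pairs $(\theta,\delta)$ and $(\alpha,\delta)$ define the same valuation, and $f=f^h$. So $v_f(g)=\bar v(g(\theta))$. Since $\deg f>\deg\phi$, this is an ordinary MLV step with defect $1$ by Lemma \ref{nl1}. Then:
\begin{itemize}
\item $\Gamma_\theta=\Gamma_{v_f}=\Gamma_w=\Gamma_\alpha+\mathbb Z\,w(\phi)$, using Theorem \ref{dpth1}(i)--(ii) and $\Gamma^\circ_{v_f}=\Gamma_w$.
\item $w(\phi)=\bar v(\phi(\theta))$, because $\deg\phi<\deg\theta$ and the pair is distinguished.
\item $k_\theta=k_{v_f}\cong\kappa_{v_f}$, and this should be $k_\alpha(r^*)$ with $r^*$ specialized to a root of the residual polynomial $U$ of $f$ from Theorem \ref{ppth}. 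That is, $k_\theta=k_\alpha\big((\phi(\theta)^e/h(\alpha))^*\big)$, since $h(\theta)\sim h(\alpha)$ by Lemma \ref{3.12} applied to $h$.
\end{itemize}
For (iv), Theorem \ref{2.12} applied to $f,\phi\in\operatorname{KP}(w)$ with $f=f^h$ gives $d(K(\theta)|K)=d(K(\alpha)|K)$ directly.

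\emph{Converse.} Assume (i)--(iv) and set $w=w_{\alpha,\delta}$. By (i), $\phi$ is a minimal degree key polynomial of $w$. It suffices to prove $f\in\operatorname{KP}(w)$ and then invoke Theorem \ref{3.9}.
\begin{itemize}
\item \emph{Computing $w(\phi)$.} Since $\bar v(\theta-\alpha_i)\le\delta$ for all conjugates $\alpha_i$, a root-by-root comparison of $\overline w_{\alpha,\delta}(x-\alpha_i)$ with $\bar v(\theta-\alpha_i)$ gives $w(\phi)=\bar v(\phi(\theta))$.
\item \emph{Matching low-degree values.} I would show that for $\deg g<\deg\phi$ we have $\bar v(g(\theta))=\bar v(g(\alpha))=w(g)$. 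This comes from Lemma \ref{3.12}, since the roots of such $g$ lie outside $B(\alpha,\delta)$ by minimality.
\item \emph{Degree count.} Unibranchedness together with (ii), (iii), (iv) gives
\[
\deg f=e_\theta f_\theta d_\theta=\big(e\,e_\alpha\big)\big(m f_\alpha\big)d_\alpha=e\,m\deg\phi,
\]
where $m=[k_\theta:k_\alpha]$ is the degree of the minimal polynomial $U$ of $(\phi(\theta)^e/h(\alpha))^*$ over $k_\alpha$.
\item \emph{Lifting.} I want to show that $f$ is the nontrivial lifting of $U$ with respect to $(\alpha,\delta)$ and $h$. By Theorem \ref{ppth} this makes $f$ a key polynomial (note $U\neq Y$ since the residue is nonzero). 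To do this, write the $\phi$-adic expansion of $f$, grouped as $f=\sum_j c_j\phi^{ej}$ up to $w$-higher terms. Evaluate $f(\theta)=0$ using the value matching, together with Lemma \ref{3.13} to control $\bar v(f(\alpha))$. The goal is that the $w$-residue of $f/h^m$ is a monic polynomial of degree $m$ in $Y$ vanishing at $r^*\mapsto(\phi(\theta)^e/h(\alpha))^*$, hence equal to $U$.
\end{itemize}

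\emph{Main obstacle.} The hardest step is this lifting argument in the converse, in particular showing that the $w$-residue of $f/h^m$ has full degree $m$. For that I would compare $w(f)$ with $e\,m\,w(\phi)$, using Lemma \ref{3.13} to get $\bar v(f(\alpha))=\deg f\cdot\bar v(\phi(\theta))/\deg\phi$. The defect equality (iv) is exactly what forces $\deg f=em\deg\phi$ rather than a multiple of it.
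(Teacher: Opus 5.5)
There is a genuine gap in the converse, at exactly the obstacle you flag. Your forward (i), (ii) and (iv) are fine, and (iv) via Theorem \ref{2.12} is what the paper does.

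In the converse, Lemma \ref{3.13} only gives $\bar v(f(\alpha))=em\,w(\phi)$. Since $f(\alpha)=f_0(\alpha)$, this says $w(f_0)=w(\phi^{em})$ for the two extreme terms of the $\phi$-expansion. It does not show these terms attain the minimum. A priori $w(f)=\min_i w(f_i\phi^i)$ could be strictly smaller. Then $0,em\notin S_{w,\phi}(f)$, $w(f)\neq w(h^m)$, and $f$ is not a lifting of a degree-$m$ polynomial.

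The missing input is $w(f)=\bar v(f(\alpha))$. The paper gets it from the second half of (i) together with irreducibility of $f$ over $K^h$. That irreducibility comes from $K(\theta)|K$ being unibranched, not from Proposition \ref{bp3}, since $f\in\operatorname{KP}(w)$ is not yet known. Every root of $f$ is $\sigma(\theta)$ with $\sigma\in\mathrm{Aut}(\overline{K}|K^h)$, so $\bar v(\sigma(\theta)-\alpha)=\bar v(\theta-\sigma^{-1}(\alpha))\le\delta$. Hence $w(f)=\sum_i\min\{\delta,\bar v(\alpha-\theta_i)\}=\bar v(f(\alpha))$.

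Alternatively, your idea of evaluating $f(\theta)=0$ closes the gap on its own. By value matching, the term $f_i(\theta)\phi(\theta)^i$ has value $w(f_i)+iw(\phi)$. The indices of minimal value lie in a single class modulo $e$, and those terms must cancel. This gives a nonzero polynomial over $k_\alpha$ of degree $(\max S-\min S)/e$ with root $\xi=(\phi(\theta)^e/h(\alpha))^*$. Since $[k_\alpha(\xi):k_\alpha]=m$ and $\max S-\min S\le em$, this forces $\min S=0$ and $\max S=em$. On this route Lemma \ref{3.13} is not even needed. With either fix, the rest of your lifting argument and the appeals to Theorem \ref{ppth} and Theorem \ref{3.9} go through as in the paper.

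A smaller gap is (iii) in the forward direction. Saying $\kappa_{v_f}$ ``should be'' $k_\alpha(r^*)$ with $r^*$ specialized needs the image of $\Delta_w$ in $\Delta_{v_f}$ to be all of $k_\theta$. That is not among the cited results, and without it $r^*\in k_w$ has no map to $k_\theta$.

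The paper argues differently. It uses unibranchedness and (iv) to get $[k_\theta:k_\alpha]=t$, where $\deg f=et\deg\phi$. It then shows $\xi$ has degree at least $t$: a relation of degree $s<t$ would produce $g=\sum_i g_i\phi^{ei}$ with $\deg g<\deg f$, $w(g)\le 0$ but $\bar v(g(\theta))>0$.

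A direct fix along your lines also works, and needs no unibranchedness. Every nonzero element of $k_\theta$ is $d(\theta)^*$ with $\deg d<\deg f$ and $w(d)=\bar v(d(\theta))=0$. In $d(\theta)=\sum_i d_i(\theta)\phi(\theta)^i$, only indices of value $0$ survive, and these are of the form $i=ej$. By Lemma \ref{3.12} each contributes $(d_i(\alpha)h(\alpha)^j)^*\xi^j$, so $k_\theta=k_\alpha[\xi]$.
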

	\begin{proof}
	We first assume that $(\theta,\alpha)$ is a distinguished pair. Then assertion (i) follows from the definition of a distinguished pair. Let $\delta=\bar{v}(\theta-\alpha)$ and define a valuation $w=w_{\alpha,\delta}.$ By Theorem \ref{3.9}, we have that $f,\phi\in \operatorname{KP}(w)$ and $\phi$ is of minimal degree.
	Since $\deg f>\deg \phi,$ $w$ is residue-transcendental. Let $e$ be the smallest positive integer such that $ew(\phi)\in \Gamma_{\alpha}.$ Then there exists a polynomial $h \in K[x]$ of  degree less than $\deg \phi,$ such that $w(h)=\bar{v}(h(\alpha))=ew(\phi),$ and therefore by Theorem \ref{ppth}, we have that $f$ is a lifting of a monic irreducible polynomial of degree (say) $t$ over $k_\alpha$ with respect to $(\alpha,\delta)$ and $h.$ Hence $\deg f= et\deg \phi$ i.e.,
	\begin{align}\label{dpeq2}
		 \frac{[K(\theta):K]}{[K(\alpha):K]}=et.
	\end{align} From  Lemma \ref{3.12} , for any polynomial $g\in K[x]$ with $\deg g<\deg \phi$, we have $\bar{v}(g(\theta)-g(\alpha))>\bar{v}(g(\alpha)),$ i.e., $\bar{v}(g(\alpha))= \bar{v}(g(\theta))$ and $(g(\alpha))^*=(g(\theta))^*.$ Therefore, we have \[\Gamma_\alpha \subset \Gamma_\theta, \ \text {and} \ k_\alpha\subset k_\theta.\]Since $\deg f>\deg \phi$, $w(\phi)=\bar{v}(\phi(\theta))$ and $f$ is a proper key polynomial for $w.$ Keeping in mind that $(\theta,\alpha)$ is a distinguished pair, on using Lemma 2.11 and Corollary 6.4 of \cite{EN4}, we have
	\[\Gamma_w=\Gamma^\circ_w+\mathbb{Z}w(\phi),\] 
	\[\text{where}\ \ \Gamma^\circ_w=\{w(g)=\bar{v}(g(\alpha))\mid (0\neq) g\in K[x], \deg g <\deg \phi\}= \Gamma_\alpha,\] 
	\[\text{and}\ \ \Gamma_w=\{w(g)=\bar{v}(g(\theta))\mid (0\neq) g\in K[x], \deg g<\deg f\}= \Gamma_\theta.\]
  Hence, assertion (ii)  follows.
	 As $w$ is residue-transcendental, $\Gamma_w/\Gamma^\circ_w=\Gamma_\theta/\Gamma_\alpha$ is torsion. Since $e$ is the smallest positive integer such that 
	$e\bar{v}(\phi(\theta))=\bar{v}(h(\alpha))\in \Gamma_\alpha,$ \begin{align}\label{dpeq3}
	(\Gamma_\theta:\Gamma_\alpha)=e.	
	\end{align}
 As both $K(\theta)|K$ and $K(\alpha)|K$ are unibranched, we have
	\begin{align}\label{dpeq4}
	[K(\theta):K]=(\Gamma_\theta:\Gamma_v)[k_\theta:k_v]d(K(\theta)|K),	
	\end{align}
	\begin{align}\label{dpeq5}
	[K(\alpha):K]=(\Gamma_\alpha:\Gamma_v)[k_\alpha:k_v]d(K(\alpha)|K),	
	\end{align}
	and using  Theorem \ref{2.12}, $d(K(\theta)|K)=d(K(\alpha)|K),$ i.e., assertion (iv) follows.
	Since $(\Gamma_\theta:\Gamma_v)=(\Gamma_\theta:\Gamma_\alpha)(\Gamma_\alpha:\Gamma_v),$ and $[k_\theta:k_v]=[k_\theta:k_\alpha][k_\alpha:k_v],$ on dividing (\ref{dpeq4}) by (\ref{dpeq5}), we obtain
	\begin{align*}
		\frac{[K(\theta):K]}{[K(\alpha):K]}=(\Gamma_\theta:\Gamma_\alpha)[k_\theta:k_\alpha],
			\end{align*}
			which implies that $et=e[k_\theta:k_\alpha]$ (by  (\ref{dpeq2}) and (\ref{dpeq3})),
		hence $	t=[k_\theta:k_\alpha]$.
	So to prove assertion (iii), it is enough to show that $\left(\frac{\phi(\theta)^e}{h(\alpha)}\right)^*\in k_\theta$ is algebraic over $k_\alpha$ of degree $t.$ Suppose that $\left(\frac{\phi(\theta)^e}{h(\alpha)}\right)^*$ is algebraic over $k_\alpha$ of degree $s<t.$ Then there exist polynomials $q_i\in K[x]$ with $\deg q_i < \deg \phi$, and $q_0(\alpha)^*\neq 0$ such that
	\[\left(\left(\frac{\phi(\theta)^e}{h(\alpha)}\right)^*\right)^s + q_{s-1}(\alpha)^*\left(\left(\frac{\phi(\theta)^e}{h(\alpha)}\right)^*\right)^{s-1}+\cdots+q_0(\alpha)^*=0.\] For $0\leq i \leq {s-1},$ we write $q_i(\alpha)/h(\alpha)^i$ as $g_i(\alpha)$ and $h(\alpha)^{-s}$ as $g_s(\alpha),$ where each $g_i\in K[x]$ is of degree less than $\deg \phi.$ So the above equation can be rewritten as 
	\[(g_s(\alpha)\phi(\theta)^{es})^*+(g_{s-1}(\alpha)\phi(\theta)^{e{(s-1)}})^*+\cdots+g_0(\alpha)^*=0.\] 
	Since $\deg g_i<\deg \phi,$ we have $g_i(\alpha)^*=g_i(\theta)^*$ (Lemma \ref{3.12} ). Therefore, the above equation shows that
	\begin{align}\label{dpeq6}
		\bar{v}(g_s(\theta)\phi(\theta)^{es}+g_{s-1}(\theta)\phi(\theta)^{e(s-1)}+\cdots +g_0(\theta))>0.\end{align}
	Put $g(x)=g_s(x)\phi(x)^{es}+g_{s-1}(x)\phi(x)^{e(s-1)}+\cdots +g_0(x).$ Since $g$ is in its $\phi$-expansion, we have \begin{align}\label{dpeq7}
			w(g)=\min_{0\leq i \leq s}\{\bar{v}(g_i(\alpha))+iew(\phi)\}\leq \bar{v}(g_0(\alpha))=0.
	\end{align}
		  Observe that $\deg g<es\deg \phi+ \deg \phi\leq e(t-1)\deg \phi+\deg \phi=et\deg \phi+(1-e)\deg \phi\leq et\deg \phi=\deg f,$ and hence by (\ref{dpeq6}) , we have \[w(g)=\bar{v}(g(\theta))>0,\] which contradicts (\ref{dpeq7}). Thus $\left(\frac{\phi(\theta)^e}{h(\alpha)}\right)^*$ is algebraic over $k_\alpha$ of degree $t.$ \\ 
		
	Conversely, assume that statements (i)-(iv) hold.
	Set $\delta=\bar{v}(\theta-\alpha)$, from (i) we have $(\alpha,\delta)$ is a $(K,v)$-minimal pair. Define a valuation $w=w_{\alpha,\delta},$ then $\phi$ is a minimal degree key polynomial for $w.$ To prove that $(\theta,\alpha)$ is a distinguished pair, by Theorem \ref{3.9}, it is enough to show that $f \in \operatorname{KP}(w).$ \\
	Let $\phi=\prod_{i=1}^n(x-\alpha_i),$ where $n=\deg\phi$. Then
	\[w(\phi)=\sum_{i=1}^n\overline{w}_{\alpha,\delta}(x-\alpha_i)=\sum_{i=1}^n\min\{\delta,\bar{v}(\alpha-\alpha_i)\}.\] Using assumption (i), it can be easily shown that $\min\{\delta,\bar{v}(\alpha-\alpha_i)\}=\bar{v}(\theta-\alpha_i),$ for all $i, \ 1\leq i\leq n.$ Therefore, we have
	\begin{align}\label{dpeq8}
	w(\phi)=\sum_{i=1}^n\bar{v}(\theta-\alpha_i)=\bar{v}(\prod_{i=1}^n(\theta-\alpha_i))=\bar{v}(\phi(\theta)).
	\end{align} By assumption (iii), $e$ is the smallest positive integer such that $e\bar{v}(\phi(\theta))=\bar{v}(h(\alpha))\in \Gamma_\alpha,$ i.e., $(\Gamma_\theta:\Gamma_\alpha)=e.$ Now suppose that $[k_\theta:k_\alpha]=t.$ By assumption (iv), we have $d(K(\theta)|K)=d(K(\alpha)|K).$ Keeping in mind that $K(\theta)|K$ and $K(\alpha)|K$ are both unibranched, we have
	\begin{align*}
	\frac{[K(\theta):K]}{[K(\alpha):K]}=(\Gamma_\theta:\Gamma_\alpha)[k_\theta:k_\alpha]=et,
		\end{align*}
		which implies that $\deg f=et\deg \phi.$ Write $f=\prod_{i=1}^{etn}(x-\theta_i),$  then
	\[w(f)=\sum_{i=1}^{etn}\overline{w}_{\alpha,\delta}(x-\theta_i)=\sum_{i=1}^{etn}\min\{\delta,\bar{v}(\alpha-\theta_i)\}.\] As $f=f^h,$ for any $\theta_i\in Z(f),$ there exist $\sigma\in \text{Aut}(\overline{K}|K^h)$ such that $\theta_i=\sigma(\theta),$ and hence, $\bar{v}(\theta_i-\alpha)=\bar{v}(\sigma(\theta)-\alpha)=\bar{v}\circ\sigma(\theta-\sigma^{-1}(\alpha))=\bar{v}(\theta-\sigma^{-1}(\alpha))\leq\delta,$ where the last inequality follows from assumption (i). Therefore, we have $\min\{\delta,\bar{v}(\alpha-\theta_i)\}=\bar{v}(\alpha-\theta_i),$ for all $i.$ Hence,
		\[w(f)=\sum_{i=1}^{etn}\bar{v}(\alpha-\theta_i)=\bar{v}(\prod_{i=1}^{etn}(\alpha-\theta_i))=\bar{v}(f(\alpha)).\]Let $f=\sum_{i= 0}^{et}f_i\phi^i$ be the $\phi$-expansion of $f,$ then $f_{et}=1,$ and by the above equation, we have $w(f)=\bar{v}(f(\alpha))=\bar{v}(f_0(\alpha))=w(f_0).$ By Lemma \ref{3.13}, $et\bar{v}(\phi(\theta))=\bar{v}(f(\alpha)),$ which implies  that $w(f)=etw(\phi)$ (see (\ref{dpeq8})). Let $i\in S_{w,\phi}(f)$ be arbitrary, then $w(f_i\phi^i)=w(f)=w(f_0),$ i.e., $iw(\phi)=w(f_0)-w(f_i)\in \Gamma_\alpha.$ But $e$ is the smallest positive integer such that $ew(\phi)\in \Gamma_\alpha,$ so $S_{w,\phi}(f)=\{0, ej_1, ej_2, \ldots, ej_s, et\},$ for some $0<j_1<j_2<\cdots<j_s<t.$ As $tw(h)=tew(\phi)=w(f)$, therefore
		\[\left(\frac{f}{h^t}\right)^*=\left(\frac{f_0}{h^t}\right)^*+\left(\frac{f_{ej_1}}{h^{t-j_1}}\right)^*\left(\left(\frac{\phi^e}{h}\right)^{j_1}\right)^*+\cdots+\left(\frac{f_{ej_s}}{h^{t-j_s}}\right)^*\left(\left(\frac{\phi^e}{h}\right)^{j_s}\right)^*+\left(\left(\frac{\phi^e}{h}\right)^t\right)^*.\] Since $\deg h,\deg f_i<\deg \phi$, by Lemma \ref{dpl12}, we have $h^*=h(\alpha)^*,$ and $f_i^*=f_i(\alpha)^*.$ Therefore,  \[\left(\frac{f(x)}{h(\alpha)^t}\right)^*=U\left(\left(\frac{\phi(x)^e}{h(\alpha)}\right)^*\right),\] where \[U(Y)=\left(\frac{f_0(\alpha)}{h(\alpha)^t}\right)^*+\left(\frac{f_{ej_1}(\alpha)}{h(\alpha)^{t-j_1}}\right)^*Y^{j_1}+\cdots+\left(\frac{f_{ej_s}(\alpha)}{h(\alpha)^{t-j_s}}\right)^*Y^{j_s}+Y^t\in k_\alpha [Y],\] is a monic polynomial over $k_\alpha$ of degree $t,$ whose constant term is nonzero. Also \[U\left(\left(\frac{\phi(\theta)^e}{h(\alpha)}\right)^*\right)=\left(\frac{f(\theta)}{h(\alpha)^t}\right)^*=0.\] By assumption (ii) it follows that $U(Y)$ is the minimal polynomial of $\left(\frac{\phi(\theta)^e}{h(\alpha)}\right)^*$ over $k_\alpha.$ Therefore, we have that $f$ is a lifting of a monic irreducible polynomial $U(Y),$ in an indeterminate $Y= \left(\frac{\phi(x)^e}{h(\alpha)}\right)^*$ over $k_\alpha,$ having degree $t\geq 1,$ with respect to $(\alpha,\delta)$ and $h.$ Hence, by Theorem \ref{ppth} we have, $f\in\operatorname{KP}(w).$
	\end{proof}
	
	\begin{remark}
		In the forward part of the above theorem, only $K(\alpha)|K$ is unibranched is required. Since $f,\phi\in \operatorname{KP}(w),$ by Proposition \ref{bp2}, $K(\alpha)|K$ is unibranched implies that $K(\theta)|K$ is also unibranched.
	\end{remark}
	
	As an application of above result, we have the following two theorems.
	
	\begin{theorem}\label{dpmt2}
		Let $(\theta,\alpha),$ and $(\theta,\beta)$ be two $(K,v)$-distinguished pairs such that $K(\theta)|K$ is unibranched, and $\phi, \psi$ be the minimal polynomials of $\alpha, \beta$ over $K$  respectively. Then
		\begin{enumerate}[(i)]
			\item $\Gamma_\alpha=\Gamma_\beta$,
			\item $k_\alpha=k_\beta$ ,
			\item $\bar{v}(\phi(\theta))=\bar{v}(\psi(\theta))$, and
			\item $d(K(\alpha)|K)=d(K(\beta)|K).$
		\end{enumerate}
	\end{theorem}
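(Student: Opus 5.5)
Let $f$ be the minimal polynomial of $\theta$ and set $\delta=\bar{v}(\theta-\alpha)$, $\delta'=\bar{v}(\theta-\beta)$. The plan is to put $\alpha$ and $\beta$ inside the same residue-transcendental valuation and then read off (i)--(iv) from Theorem \ref{dpmt} and Theorem \ref{2.12}.

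The first step is to show $\delta=\delta'$ and $\deg\alpha=\deg\beta$. Condition (ii) of a distinguished pair says both $\delta$ and $\delta'$ equal $\max\{\bar{v}(\theta-\gamma)\mid\deg\gamma<\deg\theta\}$, so $\delta=\delta'$. Condition (iii), applied to each pair, gives $\deg\beta\geq\deg\alpha$ and $\deg\alpha\geq\deg\beta$. Hence $\bar{v}(\alpha-\beta)\geq\delta$. By Lemma \ref{il1}, $w:=w_{\alpha,\delta}=w_{\beta,\delta}$. By Theorem \ref{3.9}, both $(\alpha,\delta)$ and $(\beta,\delta)$ are minimal pairs, $\phi$ and $\psi$ are minimal degree key polynomials for $w$, and $f\in\operatorname{KP}(w)$.

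Since $K(\theta)|K$ is unibranched, $f=f^h$. Proposition \ref{bp3}(ii) then gives $\phi=\phi^h$ and $\psi=\psi^h$, so $K(\alpha)|K$ and $K(\beta)|K$ are unibranched by Proposition \ref{bp1}. We may therefore apply Theorem \ref{dpmt} to both pairs, and Theorem \ref{2.12} to $f,\phi,\psi$.

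Assertion (iv) comes straight from Theorem \ref{2.12}: $d(K(\alpha)|K)=d(K(\theta)|K)=d(K(\beta)|K)$.

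For (iii), I would use Theorem \ref{dpth1}(ii) together with (\ref{dpeq8}) from the proof of Theorem \ref{dpmt}. These give $w(\phi)=\bar{v}(\phi(\theta))$ and $w(\psi)=\bar{v}(\psi(\theta))$. Since $\phi$ and $\psi$ are monic of equal degree, $\psi-\phi$ has degree less than $\deg\phi$, so the $\phi$-expansion of $\psi$ is $\phi+(\psi-\phi)$. Thus $w(\psi)=\min\{w(\phi),w(\psi-\phi)\}\leq w(\phi)$, and by symmetry $w(\phi)\leq w(\psi)$. A cleaner route: $\psi$ has a root at distance $\geq\delta$ from $\alpha$, so both are minimal degree key polynomials whose values equal $\sum\min\{\delta,\bar{v}(\alpha-\cdot)\}$. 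Either way, $w(\phi)=w(\psi)$, which gives (iii).

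For (i) and (ii), I would use the description $\Gamma^\circ_w=\{w(g)\mid \deg g<\deg w\}$, where the minimal degree key polynomial can be taken to be either $\phi$ or $\psi$. By Theorem \ref{dpth1}(i), $w(g)=\bar{v}(g(\alpha))=\bar{v}(g(\beta))$ for all $g$ with $\deg g<\deg\phi$, so $\Gamma_\alpha=\Gamma^\circ_w=\Gamma_\beta$. For the residue fields, Lemma \ref{3.12} applied to the minimal pair $(\alpha,\delta)$, with $\beta$ in the ball and $g$ of degree less than $\deg\phi$, gives $g(\alpha)^*=g(\beta)^*$ whenever $\bar{v}(g(\alpha))=0$. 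Every element of $k_\alpha$ and of $k_\beta$ has this form, so $k_\alpha=k_\beta$.

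I expect the main obstacle to be the first step, showing $\deg\alpha=\deg\beta$ and that the two pairs define the same $w$. After that, everything reduces to results already established earlier in the paper.
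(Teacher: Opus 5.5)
Your proposal is correct and follows essentially the paper's argument: the same common valuation $w=w_{\alpha,\delta}=w_{\beta,\delta}$ with $\phi,\psi$ as minimal degree key polynomials and $f\in\operatorname{KP}(w)$, Lemma \ref{3.12} for the residue fields (your use of Theorem \ref{dpth1}(i) for the value groups is equivalent), and $w(\phi)=\bar{v}(\phi(\theta))$, $w(\psi)=\bar{v}(\psi(\theta))$ for (iii); your expansion argument with $\psi=\phi+(\psi-\phi)$ proves $w(\phi)=w(\psi)$ by hand where the paper cites \cite[Theorem 3.9]{EN4}, and citing Theorem \ref{2.12} directly for (iv) shortcuts the paper's appeal to Theorem \ref{dpmt}. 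Your ``cleaner route'' for (iii) is not actually an argument, since nothing shows the two sums of terms $\min\{\delta,\bar{v}(\alpha-\cdot)\}$ coincide, so rely on the expansion argument instead; also, $w(\phi)=\bar{v}(\phi(\theta))$ follows most directly from the remark after the definition of distinguished pairs (as the paper uses), not from Theorem \ref{dpth1}(ii).
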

	\begin{proof}
		By definition of a distinguished pair, we have \[\bar{v}(\theta-\alpha)=\bar{v}(\theta-\beta), \ \ \text{and}\ \ \deg \alpha=\deg \beta.\] Let $\delta=\bar{v}(\theta-\alpha),$ then $(\alpha,\delta),$ and $(\beta,\delta)$ are both $(K,v)$-minimal pairs. We now define $w=w_{\alpha,\delta}.$ Since $\bar{v}(\alpha-\beta)\geq \delta,$ therefore, $(\beta,\delta)$ is also a minimal pair of definition for $w.$ Hence, $\phi$ and $\psi$ are both minimal degree key polynomials for $w.$ Let $f$ be the minimal polynomial of $\theta$ over $K,$ then by Theorem \ref{3.9}, $f\in\operatorname{KP}(w).$ As  $K(\theta)|K$ is unibranched, Proposition \ref{bp2} implies that both $K(\alpha)|K$ and $K(\beta)|K$ are also unibranched. Again as $\bar{v}(\alpha-\beta)\geq  \delta,$ so by Lemma \ref{3.12}, for any polynomial $g\in K[x]$ with $\deg g<\deg \phi=\deg \psi$, $\bar{v}(g(\beta)-g(\alpha))>\bar{v}(g(\alpha)).$ Therefore, we have \[\Gamma_\alpha = \Gamma_\beta, \ \text {and} \ k_\alpha= k_\beta,\] proving assertions (i) and (ii).
		As $\deg \phi=\deg \psi < \deg \theta,$ $w(\phi)=\bar{v}(\phi(\theta)),$ and $w(\psi)=\bar{v}(\psi(\theta)).$ Since $\phi, \psi$ are both same degree key polynomials for $w,$ by \cite[Theorem 3.9]{EN4} we have $w(\phi)=w(\psi)$.  Hence, assertion (iii) follows. Assertion (iv) directly follows from Theorem \ref{dpmt}.
		\end{proof}

		\begin{theorem}\label{dpth3}
			Let $\theta_i$ be elements in the algebraic  closure $\overline{K}$ of a valued field $(K,v)$ with respective minimal polynomials $\phi_i$ over $K,$ for $0\leq i\leq n.$ If $\theta=\theta_n, \theta_{n-1},\ldots, \theta_0$ is a saturated distinguished chain for $\theta,$ then the following conditions are satisfied.
			\begin{enumerate}[(i)]
				\item $\Gamma_{\theta_i}=\Gamma_{\theta_{i-1}}+\mathbb{Z}\bar{v}(\phi_{i-1}(\theta_i))$ and $\Gamma_{\theta_0}=\Gamma_v,$
				\item $k_{\theta_i}=k_{\theta_{i-1}}\left(\left(\frac{\phi_{i-1}(\theta_i)^{e_i}}{h_{i-1}(\theta_{i-1})}\right)^*\right),$ where $e_i$ is the smallest positive integer such that $e_i\bar{v}(\phi_{i-1}(\theta_i))=\bar{v}(h_{i-1}(\theta_{i-1}))\in \Gamma_{\theta_{i-1}},$ for some $h_{i-1}\in K[x]$ with $\deg h_{i-1}<\deg \phi_{i-1}$ and $k_{\theta_0}=k_v,$
				\item $[K(\theta_i):K]=(\Gamma_{\theta_i}:\Gamma_v)[k_{\theta_i}:k_v].$
			\end{enumerate}
		\end{theorem}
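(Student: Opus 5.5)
The plan is induction on $i$. The inductive statement for each $i$ is: $K(\theta_i)|K$ is unibranched and $d(K(\theta_i)|K)=1$. Once this is known for all $i$, assertions (i) and (ii) will come from the forward part of Theorem \ref{dpmt}, applied to each distinguished pair $(\theta_i,\theta_{i-1})$. Assertion (iii) will then be the fundamental equality for a unibranched extension with trivial defect.

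\emph{Base case.} Since $\theta_0\in K$, we have $K(\theta_0)=K$, so $\Gamma_{\theta_0}=\Gamma_v$ and $k_{\theta_0}=k_v$. The extension $K(\theta_0)|K$ is trivially unibranched with defect $1$.

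\emph{Unibranchedness.} Suppose $K(\theta_{i-1})|K$ is unibranched. Set $\delta_i=\bar v(\theta_i-\theta_{i-1})$ and $w=w_{\theta_{i-1},\delta_i}$. By Theorem \ref{3.9}, $(\theta_{i-1},\delta_i)$ is a $(K,v)$-minimal pair and $\phi_i\in\operatorname{KP}(w)$. Moreover $\phi_{i-1}$ is a minimal degree key polynomial for $w$, and $w$ is residue-transcendental because $\deg\phi_i>\deg\phi_{i-1}$. By Proposition \ref{bp2}, unibranchedness of $K(\theta_{i-1})|K$ forces $K(\theta_i)|K$ to be unibranched. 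This is exactly the observation in the Remark following Theorem \ref{dpmt}: for the forward direction only $K(\alpha)|K$ needs to be unibranched.

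\emph{Invoking Theorem \ref{dpmt}.} Both $K(\theta_i)|K$ and $K(\theta_{i-1})|K$ are now unibranched. The forward direction of Theorem \ref{dpmt}, applied to the distinguished pair $(\theta_i,\theta_{i-1})$, gives three things:
\[
\Gamma_{\theta_i}=\Gamma_{\theta_{i-1}}+\mathbb{Z}\bar v(\phi_{i-1}(\theta_i)),
\]
the residue field description in (ii) with the stated $e_i$ and $h_{i-1}$, and
\[
d(K(\theta_i)|K)=d(K(\theta_{i-1})|K).
\]
Induction from the base case then yields $d(K(\theta_i)|K)=1$ for all $i$.

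\emph{Assertion (iii).} Since $K(\theta_i)|K$ is unibranched with defect $1$, the fundamental equality $[K(\theta_i):K]=(\Gamma_{\theta_i}:\Gamma_v)[k_{\theta_i}:k_v]\,d(K(\theta_i)|K)$, as used in (\ref{dpeq4}), reduces to (iii).

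The only delicate point is the propagation of unibranchedness up the chain; everything else is already packaged in Theorem \ref{dpmt}. That propagation rests on placing $\phi_i$ and $\phi_{i-1}$ as key polynomials for the same valuation $w$, which is supplied by Theorem \ref{3.9}. Proposition \ref{bp2} then transfers irreducibility over $K^h$ from $\phi_{i-1}$ to $\phi_i$.
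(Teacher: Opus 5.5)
Your proof is correct and follows essentially the same route as the paper. You propagate unibranchedness up the chain via Theorem \ref{3.9}, which places $\phi_i$ and $\phi_{i-1}$ in $\operatorname{KP}(w)$ for a common $w$; you then apply the forward part of Theorem \ref{dpmt} to get (i), (ii) and the defect equality, and induct from $\theta_0\in K$ to get (iii). The only cosmetic difference is that the paper passes from $\phi_{i-1}=\phi_{i-1}^h$ to $\phi_i=\phi_i^h$ via Proposition \ref{bp3}(ii) and then invokes Proposition \ref{bp1}, whereas you use their combination, Proposition \ref{bp2}, directly.
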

		\begin{proof}
			As $\theta=\theta_n, \theta_{n-1},\ldots, \theta_0$ is a saturated distinguished chain for $\theta,$ we have $(\theta_i,\theta_{i-1})$ is a distinguished pair for every $i,\ 0\leq i\leq n$ and  $\theta_0\in K.$  As $\phi_0\in K[x]$ is of degree one, it follows that $\phi_0=\phi_0^h.$ Since $(\theta_1,\theta_0)$ is a distinguished pair, therefore by Theorem \ref{3.9}, $\phi_1, \phi_0$ are both key polynomials for some valuation on $K[x],$ and hence by Proposition \ref{bp3}, $\phi_0=\phi_0^h$ implies that $\phi_1=\phi_1^h.$ Consequently, $\phi_i=\phi_i^h,\ \forall \ 0\leq i \leq n.$ So by Proposition \ref{bp1}, we have $K(\theta_i)|K$ is unibranched for every $i,\ 0\leq i\leq n.$  Since $(\theta_i,\theta_{i-1})$ is a distinguished pair for every $i,\ 0\leq i\leq n$, by using Theorem \ref{dpmt} (ii) and (iii), we have $\Gamma_{\theta_i}=\Gamma_{\theta_{i-1}}+\mathbb{Z}\bar{v}(\phi_{i-1}(\theta_i))$ and 
			$k_{\theta_i}=k_{\theta_{i-1}}\left(\left(\frac{\phi_{i-1}(\theta_i)^{e_i}}{h_{i-1}(\theta_{i-1})}\right)^*\right),$ where $e_i$ is the smallest positive integer such that $e_i\bar{v}(\phi_{i-1}(\theta_i))=\bar{v}(h_{i-1}(\theta_{i-1}))\in \Gamma_{\theta_{i-1}},$ for some $h_{i-1}\in K[x]$ with $\deg h_{i-1}<\deg \phi_{i-1}$. Also $\theta_0\in K,$ implies that $\Gamma_{\theta_0}=\Gamma_v$ and $k_{\theta_0}=k_v.$ Hence, assertions (i) and (ii) follows. Again as $\theta_0\in K,$ we get $K(\theta_0)|K$ is defectless. Therefore by repeated application of Theorem \ref{dpmt} (iv), we have $K(\theta_i)|K$ is defectless for every $i,\ 1\leq i\leq n.$ Hence, $K(\theta_i)|K$ is unibranched and defectless for every $i,\ 0\leq i\leq n,$ and assertion (iii) follows.
		\end{proof}
	
	\begin{remark}
		Theorem \ref{dpmt} and \ref{dpmt2} generalize Theorem 1.1 and 1.3 of \cite{AK2} respectively, and Theorem \ref{dpth3} generlaizes Theorem 1.1 of \cite{AN2}. Observe that using these results, one can easily prove that Theorem 1.4 and 1.5 of \cite{AK2} and Theorem 1.2 of \cite{AN2} also hold for arbitrary valued fields.
		\end{remark}
		
		\section{Abstract key polynomials}
	  Let $w$ be a valuation on $K[x].$ In this section, we first recall some properties of ABKPs and later generalize the results of \cite{SA3}, proved for Henselian valued fields, to arbitrary valued fields. Finally, we give the proofs of Theorem \ref{th1} and Conjecture \ref{1.2}.
	
	\begin{definition}
		A monic polynomial $Q$ in $K[x]$ is said to be an {\bf abstract key polynomial}  (abbreviated  ABKP) for $w$ if for each polynomial $f$ in $K[x]$  with $\deg f< \deg Q$ we have $\delta(f)<\delta(Q).$
	\end{definition}
	It is immediate from the definition that all monic linear polynomials are ABKPs for $w.$  Also an ABKP for $w$ is an irreducible polynomial. If $w$ has nontrivial support $\phi K[x],$ then $\phi$ is a highest degree ABKP for $w$ as $\delta(\phi)=\infty.$
	\begin{definition}
		Let $w$ be a valuation on $K[x].$ Then
		for a polynomial $Q$ in $K[x]$ the {\bf $Q$-truncation} of $w$ is a map $w_Q:K[x]\longrightarrow \Gamma_w$ defined by 
		$$ w_Q(f):= \min_{i\geq 0}\{w(f_iQ^i)\},$$
		where $\sum_{i\geq 0} f_i Q^i,$ $\deg f_i <\deg Q,$  is the $Q $-expansion of $f.$ 
	\end{definition}
	The $Q$-truncation  $w_Q$  of $w$ need not be a valuation \cite[Example 2.5]{NS}. However,  if $Q$ is an ABKP for $w,$ then $w_Q$ is a valuation on $K[x]$ (see \cite[Proposition 2.6]{NS}). If $Q\in \text{supp}(w),$ then $w_Q=w,$ otherwise, $w_Q$ is a valuation-transcendental extension and $Q$ is a minimal degree key polynomial for $w_Q$ (see \cite[Theorem 2.21]{Ma}). In either case, $Q$ is also an ABKP for $w_Q$ (By \cite[Corollary 2.22]{Ma}).
	
%

	In the following result, we recall  some basic properties of ABKPs  for  $w$   (see Proposition 2.10 and Lemma 2.11 of  \cite{NS},  Proposition 3.8, Corollaries 3.10, 3.11,  3.13 and Theorem 6.1 of  \cite{JN1}).
	\begin{proposition}\label{abmr}
		For  ABKPs, $Q$ and $Q'$ for $w$ the following holds:
		\begin{enumerate}[(i)]
			\item If  $\delta(Q)<\delta(Q'),$ then $w_Q(Q')<w(Q').$ 
			\item If $\deg Q = \deg Q',$ then $$w(Q)<w(Q')\iff w_Q(Q')<w(Q')\iff \delta(Q)<\delta(Q').$$ Hence $Q'\in\Phi(w_Q,w)$ in this case.
			\item  Suppose that  $\delta(Q)<\delta(Q').$ For any polynomial $f\in K[x],$ we have
			\begin{align*}
				w_Q(f)&\leq w_{Q'}(f)\leq w(f),\\
				w_{Q}(f)&=w(f)\implies w_{Q'}(f)=w(f),~\text{and}\\
				w_{Q'}(f)&<w(f)\implies w_Q(f)<w_{Q'}(f).
			\end{align*}
			\item If $Q'\in \Phi(w_Q,w),$ then $Q$ and $Q'$ are key polynomials for $w_Q.$ Moreover, $w_{Q'}=[w_Q; Q',w(Q')].$
			\item 	 Every $F\in\Phi(w_Q,w)$ is an ABKP for $w$ and $\delta(Q)<\delta(F).$
			
		\end{enumerate}
	\end{proposition}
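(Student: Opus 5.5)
The plan is to reduce every assertion to a comparison of valuations of the form $\overline{w}_{\alpha,\delta}$ on linear factors over $\overline{K}$. The key input is the following identification. For an ABKP $Q$ with $Q\notin\text{supp}(w)$, the truncation $w_Q$ is valuation-transcendental and $Q$ is a minimal degree key polynomial for it (\cite[Theorem 2.21]{Ma}). By Theorem \ref{t2} and Theorem \ref{t21}, some common extension of $w_Q$ and $\bar v$ is then $\overline{w}_{\alpha,\delta(Q)}$, where $\alpha$ is an optimizing root of $Q$ with respect to a fixed common extension $\overline{w}$ of $w$ and $\bar v$. Here I use that optimizing values of $Q$ computed for $w$ and for $w_Q$ agree, since $Q$ is an ABKP for both (\cite[Corollary 2.22]{Ma}). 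If $Q\in\text{supp}(w)$, then $w_Q=w$ and there is nothing to identify. I expect this identification, and in particular making it uniform when $w$ has nontrivial support or is valuation-algebraic, to be the main obstacle. I would handle it by passing through $\overline{w}$ and invoking the cited results rather than the paper's valuation-transcendental statements directly.

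Granting the identification, I record two pointwise facts for every $\gamma\in\overline K$:
\[\overline{w}_{\alpha,\delta(Q)}(x-\gamma)=\min\{\delta(Q),\bar v(\alpha-\gamma)\}\le \overline{w}(x-\gamma),\]
with equality unless $\overline{w}(x-\gamma)>\delta(Q)$. Both follow from the ultrametric inequality and $\overline{w}(x-\alpha)=\delta(Q)$. For (i), write $Q'=\prod(x-\beta_j)$ and sum these inequalities. An optimizing root $\beta$ of $Q'$ has $\overline{w}(x-\beta)=\delta(Q')>\delta(Q)$, so its term is strict, which gives $w_Q(Q')<w(Q')$.

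For (iii), I take $\alpha'$ an optimizing root of $Q'$ and show $\bar v(\alpha-\alpha')\ge\delta(Q)$, so that $B(\alpha',\delta(Q'))\subseteq B(\alpha,\delta(Q))$. The comparison criterion for $\overline{w}_{\alpha,\delta}$ recalled in Section 2 then gives $\overline{w}_{\alpha,\delta(Q)}\le\overline{w}_{\alpha',\delta(Q')}\le\overline{w}$ factorwise. The two implications follow by examining, root by root, when each inequality is strict. If $w_Q(f)=w(f)$, every root $\gamma$ of $f$ satisfies $\overline{w}(x-\gamma)\le\delta(Q)<\delta(Q')$, and hence also contributes equally for $Q'$. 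Conversely, a root with $\overline{w}(x-\gamma)>\delta(Q')$ gives strict inequality at both levels, namely $\min\{\delta(Q),\cdot\}<\min\{\delta(Q'),\cdot\}$.

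For (ii), with $\deg Q=\deg Q'$, the $Q$-expansion of $Q'$ is $Q+(Q'-Q)$, so $w_Q(Q')=\min\{w(Q),w(Q'-Q)\}$. From this one reads off $w(Q)<w(Q')\iff w_Q(Q')<w(Q')$. Part (i), together with the symmetric statement (which yields $w_{Q'}(Q)<w(Q)$ and hence $w(Q')<w(Q)$ when $\delta(Q')<\delta(Q)$), gives the equivalence with $\delta(Q)<\delta(Q')$. The equal-$\delta$ case is excluded because there $w_Q(Q')=w(Q')$ by the pointwise equality criterion. Since $Q'$ is monic of minimal degree $\deg Q$ with $w_Q(Q')<w(Q')$, we get $Q'\in\Phi(w_Q,w)$.

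For (iv), note that $Q\in\operatorname{KP}(w_Q)$ and that $\Phi(w_Q,w)\subset\operatorname{KP}(w_Q)$ by \cite[Theorem 1.15]{V}. The formula $w_{Q'}=[w_Q;Q',w(Q')]$ is checked on $Q'$-expansions: coefficients of degree $<\deg Q'$ lie below the tangent direction, so $w_Q$ and $w$ agree on them. For (v), let $F\in\Phi(w_Q,w)$. By the contrapositive of the pointwise analysis, $w_Q(F)<w(F)$ forces some root of $F$ to have $\overline{w}$-value $>\delta(Q)$, so $\delta(F)>\delta(Q)$. Now take $f$ with $\deg f<\deg F$. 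Minimality of $\deg F$ gives $w_Q(f)=w(f)$, and then every root of $f$ has $\overline{w}$-value $\le\delta(Q)$. Hence $\delta(f)\le\delta(Q)<\delta(F)$, so $F$ is an ABKP for $w$.
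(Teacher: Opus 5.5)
\textbf{The gap: the identification.} Your root-by-root comparison is sound. Once the identification $w_Q=\overline{w}_{\alpha,\delta(Q)}|_{K[x]}$ is available (for $\delta(Q)<\infty$ and $\alpha$ an optimizing root of $Q$ for $\overline{w}$), your arguments for (i)--(v) go through as written. This includes the ball inclusion coming from $\bar v(\alpha-\alpha')=\delta(Q)$, the expansion $Q'=Q+(Q'-Q)$ in (ii), the exclusion of $\delta(Q)=\delta(Q')$, and the support cases.

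The identification itself, however, does not follow from Theorems \ref{t2} and \ref{t21} as you claim. Applied to $w_Q$, those results describe the common extensions of $w_Q$ as $\overline{w}_{\beta,\delta'}$. Here $\delta'$ is the optimizing value of $Q$ computed with a common extension of $w_Q$, and $\beta$ is an optimizing root for that extension. To reach your pair you would need $\delta'=\delta(Q)$ and $\bar v(\alpha-\beta)\ge\delta(Q)$ for some such $\beta$. Knowing that $Q$ is an ABKP for both $w$ and $w_Q$ gives neither, because each ABKP condition only compares $\delta$-values computed with respect to a single valuation. In fact, the agreement of the two optimizing values is a consequence of the identification, so appealing to it here is circular.

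\textbf{How to close it.} The identification is true (it is Novacoski's comparison of abstract key polynomials with minimal pairs), and your own pointwise facts prove it directly. The argument uses only $\overline{w}$ and $\delta(Q)<\infty$, so nontrivial support and valuation-algebraic $w$ need no separate treatment.
\begin{enumerate}[(1)]
\item $(\alpha,\delta(Q))$ is a $(K,v)$-minimal pair. If $\bar v(\alpha-\beta)\ge\delta(Q)$, then $\overline{w}(x-\beta)\ge\min\{\overline{w}(x-\alpha),\bar v(\alpha-\beta)\}\ge\delta(Q)$. So the minimal polynomial $g$ of $\beta$ has $\delta(g)\ge\delta(Q)$, which forces $\deg\beta\ge\deg Q=\deg\alpha$, since $Q$ is an irreducible ABKP.
\item The roots of any $g$ with $\deg g<\deg Q$ have $\overline{w}$-values $\le\delta(g)<\delta(Q)$, and the roots of $Q$ have $\overline{w}$-values $\le\delta(Q)$. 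By your equality case, $\overline{w}_{\alpha,\delta(Q)}$ agrees with $w$ on all such $g$ and on $Q$.
\item By the fact recalled in Section 2 (\cite[Theorem 1.1]{JN}, \cite[Theorem 2.21]{Ma}), $Q$ is a minimal degree key polynomial for $w_{\alpha,\delta(Q)}$, hence $w_{\alpha,\delta(Q)}$-minimal. Then \cite[Proposition 2.3]{EN4} gives $w_{\alpha,\delta(Q)}(\sum_i f_iQ^i)=\min_i\{w(f_i)+iw(Q)\}=w_Q(\sum_i f_iQ^i)$.
\end{enumerate}
This replaces your first paragraph and removes the need to apply \cite[Theorem 2.21]{Ma} to $w_Q$.

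\textbf{Comparison with the paper.} With that repair, your route differs from the paper's. The paper gives no proof of this proposition and only cites \cite{NS} and \cite{JN1}. In \cite{NS} these properties are obtained through the Hasse-derivative invariant $\epsilon$ (which agrees with $\delta$) and direct manipulation of truncations, without passing through roots. Your approach reduces all five items to a single comparison of $\min\{\delta(Q),\bar v(\alpha-\gamma)\}$ with $\overline{w}(x-\gamma)$ on linear factors over $\overline{K}$. This makes (iii) and (v) very transparent and handles nontrivial support at no extra cost. The price is that everything hinges on the minimal-pair identification, which must be proved as above or cited precisely.
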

	
	To obtain a relation between ABKPs and distinguished pairs, we need the following lemma. 
	\begin{lemma}\label{abl1}
		Let $(K,v)$ be a valued field, $w$ be an extension of $v$ to $K[x],$ and $\overline{w}$ be any common extension of $w$ and $\bar{v}$ to $\overline{K}[x].$ Let $F\in K[x]$ be an ABKP for $w$ and $Q\in K[x]$ be any polynomial. If $(F,Q)$ is a distinguished pair and $F$ is irreducible over $K^h,$ then the following hold:
		\begin{enumerate}[(i)]
			\item If $\theta$ and $\alpha$ are optimizing roots of $F$ and $Q$ respectively with respect to $\overline{w}$, then $(\theta,\alpha)$ is a $(K,v)$-distinguished pair.
			\item The polynomial $Q$ is an ABKP for $w.$
		\end{enumerate}
		\end{lemma}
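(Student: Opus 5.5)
The plan is to move from the given distinguished pair $(\theta_0,\alpha_0)$ of roots to the optimizing roots, using $K^h$-conjugation, which preserves $\bar v$. Since $(F,Q)$ is a distinguished pair, there are $\theta_0\in Z(F)$ and $\alpha_0\in Z(Q)$ with $(\theta_0,\alpha_0)$ a $(K,v)$-distinguished pair. Set $\delta_0=\bar v(\theta_0-\alpha_0)$. Let $\theta$ be an optimizing root of $F$ with respect to $\overline{w}$. Because $F$ is irreducible over $K^h$, all roots of $F$ are $K^h$-conjugate, so $\theta=\sigma(\theta_0)$ for some $\sigma\in \text{Aut}(\overline{K}|K^h)$. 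Hence $\bar v\circ\sigma=\bar v$.

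For (i), I first check that $(\theta,\sigma(\alpha_0))$ is again distinguished. Each defining condition involves only degrees over $K$ and values $\bar v(\theta_0-\beta)$. Degrees are preserved by $\sigma$, $\sigma$ permutes the elements of a given degree, and $\bar v(\sigma(\theta_0)-\sigma(\beta))=\bar v(\theta_0-\beta)$. So it remains to replace $\sigma(\alpha_0)$ by an arbitrary optimizing root $\alpha$ of $Q$.

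\begin{enumerate}[(a)]
\item By the distinguished-pair condition, $\delta_0=\max\{\bar v(\theta-\beta)\mid \deg\beta<\deg\theta\}$.
\item I claim $\overline{w}(x-\theta)>\delta_0$. Indeed $\delta(Q)<\delta(F)$, because $F$ is an ABKP and $\deg Q<\deg F$. Also $\overline{w}(x-\theta)=\delta(F)$.
\item Then $\overline{w}(x-\beta)=\min\{\overline{w}(x-\theta),\bar v(\theta-\beta)\}$ whenever $\bar v(\theta-\beta)<\overline{w}(x-\theta)$. Applying this to the roots of $Q$, the maximum of $\overline{w}(x-\beta)$ over $\beta\in Z(Q)$ is attained exactly at the roots $\beta$ maximizing $\bar v(\theta-\beta)$. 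One such root is $\sigma(\alpha_0)$, with value $\delta_0$.
\item So every optimizing root $\alpha$ of $Q$ satisfies $\bar v(\theta-\alpha)=\delta_0$. It is also of degree $\deg\alpha_0$, so conditions (i) and (ii) of a distinguished pair hold for $(\theta,\alpha)$. Condition (iii) depends only on $\theta$ and $\deg\alpha$, so it transfers from $(\theta,\sigma(\alpha_0))$.
\end{enumerate}

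The main obstacle is step (b): justifying $\overline{w}(x-\theta)>\delta_0$, or at least that optimizing roots of $Q$ realize $\delta_0$. My first try is a direct argument. Suppose $\overline{w}(x-\theta)\le\delta_0$. Then $\overline{w}(x-\sigma(\alpha_0))\ge\min\{\overline{w}(x-\theta),\delta_0\}=\delta(F)$, which gives $\delta(Q)\ge\delta(F)$ and contradicts the ABKP property. This settles (b) cleanly.

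For (ii), let $g\in K[x]$ with $\deg g<\deg Q$; I must show $\delta(g)<\delta(Q)=\overline{w}(x-\alpha)=\min\{\delta(F),\delta_0\}=\delta_0$. It suffices to treat each root $\beta$ of $g$, whose degree is less than $\deg\alpha<\deg\theta$. Condition (iii) of the distinguished pair gives $\bar v(\theta-\beta)<\delta_0$. Since $\overline{w}(x-\theta)>\delta_0$, we get $\overline{w}(x-\beta)=\bar v(\theta-\beta)<\delta_0$. Taking the maximum over the finitely many roots gives $\delta(g)<\delta(Q)$, so $Q$ is an ABKP for $w$.
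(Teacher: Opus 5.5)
Your proof is correct, and for part (i) it takes a genuinely different and more elementary route than the paper; part (ii) is essentially the paper's argument.

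The paper's route for (i) runs through the key-polynomial machinery:
\begin{enumerate}[(1)]
\item It uses Theorem \ref{3.9} to see that $F$ and $Q$ are key polynomials for a common valuation.
\item It then uses Proposition \ref{bp3}(ii), the $j$-invariant degree comparison, to get $Q=Q^h$ from $F=F^h$.
\item With both polynomials irreducible over $K^h$, it reduces, somewhat tacitly, to showing that $\theta$ maximizes $\bar v(\,\cdot\,-\alpha)$ among its $K$-conjugates.
\item It proves this maximality from $\delta(Q)<\delta(F)$ and the optimality of $\alpha$.
\end{enumerate}

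You use $F=F^h$ only once, to transport the given pair by $\sigma\in\operatorname{Aut}(\overline{K}|K^h)$ so that its first entry is the optimizing root $\theta$ itself. Your contradiction argument then gives $\overline{w}(x-\theta)>\delta_0$. This forces $\overline{w}(x-\beta)=\bar v(\theta-\beta)$ for every $\beta\in Z(Q)$, so the optimizing roots of $Q$ are exactly the roots of $Q$ nearest to $\theta$, all at distance $\delta_0$.

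What your route buys: it makes the transfer step explicit and is self-contained. It also shows that neither the key-polynomial results nor the irreducibility of $Q$ over $K^h$ is needed for the lemma itself.

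What the paper's route buys: it records $Q=Q^h$ as a by-product. That is exactly what is needed to iterate the lemma down a distinguished chain, as in the proof of Theorem \ref{th1}. With your proof, that fact would have to be supplied separately when iterating.

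One small wording point in step (d): condition (iii) of a distinguished pair depends on $\theta$, $\deg\alpha$ and the value $\bar v(\theta-\alpha)$, not only on $\theta$ and $\deg\alpha$. The transfer is still valid, because you have already shown $\bar v(\theta-\alpha)=\delta_0=\bar v(\theta-\sigma(\alpha_0))$.
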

		\begin{proof}
		\noindent (i)~ Since $(F,Q)$ is a distinguished pair, there exist a root $\theta_0$ of $F$ and a root $\alpha_0$ of $Q$ such that $(\theta_0,\alpha_0)$ is a $(K,v)$-distinguished pair. Therefore, by Theorem \ref{3.9}, $F$ and $Q$ are key polynomials for some valuation on $K[x].$ As  $F$ is irreducible over $K^h$  i.e. $F=F^h,$ Proposition \ref{bp3} (ii) implies that $Q=Q^h,$ i.e., every $K$-conjugate of $\theta$ and $\alpha$ are also $K^h$-conjugates of $\theta$ and $\alpha$ respectively. So to prove that $(\theta,\alpha)$ is a  $(K,v)$-distinguished pair,  it is enough to show that $\theta$ is a root of $F$ for which \[\bar{v}(\theta-\alpha)=\max\{\bar{v}(\theta'-\alpha)\mid \theta' ~\text{a $K$-conjugate of $\theta$}\}.\]  Since $F$ is  an ABKP for $w$ and $\deg Q<\deg F,$ so $\delta(Q)<\delta(F)$ which  implies that 
		\begin{align}\label{e4.1}
		\overline{w}(x-\alpha)=\delta(Q)<\delta(F)=\overline{w}(x-\theta).	
		\end{align}
		On applying strong triangle law to the  above inequality,  $\bar{v}(\theta-\alpha)=\overline{w}(x-\alpha)=\delta(Q).$  As $\alpha$ is an optimizing root of $Q,$ so
		$$\overline{w}(x-\alpha')\leq \overline{w}(x-\alpha)=\bar{v}(\theta-\alpha)<\overline{w}(x-\theta),$$
		for each $K$-conjugate $\alpha'$ of $\alpha,$ which  implies that
		\begin{align}\label{e4.2}
			\bar{v}(\theta-\alpha')=\overline{w}(x-\alpha')\leq\overline{w}(x-\alpha)=\bar{v}(\theta-\alpha).
		\end{align}
		As $F=F^h$, so for any $K$-conjugate $\theta'$ of $\theta$ there exists some $K$-conjugate $\alpha'$ of $\alpha$ such that $\bar{v}(\theta'-\alpha)=\bar{v}(\theta-\alpha').$ Therefore from  (\ref{e4.2}) we have that 
		\begin{align*}
			\bar{v}(\theta'-\alpha)=\bar{v}(\theta-\alpha')\leq\bar{v}(\theta-\alpha)
		\end{align*}
		for every $K$-conjugate $\theta'$ of $\theta.$
		
		\noindent (ii)~ Let $g\in K[x]$ be any polynomial with $\deg g<\deg Q,$ we need to show that $\delta(g)<\delta(Q).$ By (i) $(\theta,\alpha)$ is a $(K,v)$-distinguished pair, so for any optimizing root $\beta$ of $g,$  $\deg\beta<\deg\alpha,$  implies
		$\bar{v}(\theta-\beta)<\bar{v}(\theta-\alpha)=\delta(Q),$ which together with (\ref{e4.1}) shows that $\delta(g)<\delta(Q).$
		\end{proof}

	\begin{theorem}\label{abth}
		Let $(K,v)$ be a valued field and $w$ an extension of $v$ to $K[x].$ Then for any ABKP $F$ for $w$ which is irreducible over $K^h$ and $Q\in K[x],$ the following are equivalent:
		\begin{enumerate}[(i)] 
			\item  $(F,Q)$ is a distinguished pair.
			\item  $Q$ is an ABKP for $w$ such that  $w_Q<w,$ $F\in\Phi(w_Q,w)$  and  $\deg F>\deg Q.$
			\item   $Q$ is an ABKP for $w$ such that  $w_Q<w$ and  $F$ is a   nontrivial lifting of some monic irreducible polynomial $G(Y)\neq Y$ in $k_{w_Q}$ with respect to $w_Q.$   
		\end{enumerate}
	\end{theorem}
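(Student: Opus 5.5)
The plan is to prove (i)$\Rightarrow$(ii)$\Rightarrow$(iii)$\Rightarrow$(i), reducing to Theorem \ref{3.9} and Lemma \ref{abl1}, with $w_Q$ playing the role of $w_{\alpha,\delta}$. Throughout, fix a common extension $\overline{w}$ of $w$ and $\bar{v}$, and let $\theta$ and $\alpha$ be optimizing roots of $F$ and $Q$ respectively.

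\emph{(i)$\Rightarrow$(ii).} By Lemma \ref{abl1}, $Q$ is an ABKP for $w$ and $(\theta,\alpha)$ is a $(K,v)$-distinguished pair. In particular $\deg F>\deg Q$. Since $F$ is an ABKP, we get $\delta(Q)<\delta(F)$, and Proposition \ref{abmr}(i) gives $w_Q(F)<w(F)$; hence $w_Q<w$. To show $F\in\Phi(w_Q,w)$ I need that every monic $g$ with $\deg g<\deg F$ satisfies $w_Q(g)=w(g)$.

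\begin{itemize}
\item Such a $g$ factors over $\overline{K}$ into linear factors $x-\beta$ with $\deg\beta<\deg\theta$.
\item For these $\beta$, condition (ii) of a distinguished pair gives $\bar{v}(\theta-\beta)\le\bar{v}(\theta-\alpha)=\delta:=\delta(Q)$.
\item The identification of $w_Q$ is the step I expect to be the main obstacle. I would argue that $w_Q$ is the restriction of $\overline{w}_{\alpha,\delta}$, because $Q$ is a minimal degree key polynomial of $w_Q$ with optimizing root $\alpha$ (Theorem \ref{t2} together with \cite[Theorem 2.21]{Ma}). This also makes $(\alpha,\delta)$ a minimal pair.
\item With that identification, $\overline{w}(x-\beta)\ge\min\{\delta(F),\bar v(\theta-\beta)\}$ and $\overline{w}_{\alpha,\delta}(x-\beta)=\min\{\delta,\bar{v}(\alpha-\beta)\}$ should agree with $\bar{v}(\theta-\beta)$, by the ultrametric inequality in each case.
\end{itemize}

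Summing over the roots of $g$ then gives $w_Q(g)=w(g)$, so $\deg\Phi(w_Q,w)=\deg F$ and $F\in\Phi(w_Q,w)$.

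\emph{(ii)$\Rightarrow$(iii).} By Proposition \ref{abmr}(iv), $F$ is a key polynomial for $w_Q$, and $Q$ is a minimal degree key polynomial for $w_Q$. Since $\deg F>\deg Q$, the valuation $w_Q$ is residue-transcendental. Theorem \ref{ppth}, applied with $w_Q=w_{\alpha,\delta}$, then shows that $F$ is a nontrivial lifting of a monic irreducible $G(Y)$ with nonzero constant term, so $G(Y)\neq Y$.

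\emph{(iii)$\Rightarrow$(i).} Theorem \ref{ppth} gives $F\in\operatorname{KP}(w_Q)$ with $\deg F>\deg Q$. Write $w_Q=w_{\alpha,\delta}$ with $(\alpha,\delta)$ a minimal pair and $\delta=\delta(Q)$, as above. Theorem 5.5 of \cite{EJP}, used as in the converse part of the proof of Theorem \ref{3.9}, produces a root $\theta'$ of $F$ with $\bar v(\theta'-\alpha)\ge\delta$ that has maximal approximation value $\delta$ among elements of smaller degree. Theorem \ref{3.9} would then make $(\theta',\alpha)$ distinguished, once we check that $\bar v(\theta'-\alpha)=\delta$ exactly. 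This last equality should follow from $w_Q<w$ and $F\in\Phi(w_Q,w)$. If needed, I would replace $\alpha$ by an element of $B(\alpha,\delta)$ of the same degree, which is harmless by Lemma \ref{il1}. This gives that $(F,Q)$ is a distinguished pair.
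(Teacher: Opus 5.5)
Your proof is correct in substance. Your (ii)$\Rightarrow$(iii) and (iii)$\Rightarrow$(i) follow the paper; for the last step the paper simply invokes Theorem \ref{3.9}, and you unpack its converse argument.

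The genuine difference is in (i)$\Rightarrow$(ii). The paper argues by contradiction. If $F\notin\Phi(w_Q,w)$, then some $g\in\Phi(w_Q,w)$ has $\deg g<\deg F$, and by Proposition \ref{abmr}(v) it is an ABKP with $\delta(Q)<\delta(g)$. An optimizing root $\beta$ of $g$ then gives $\bar v(\theta-\alpha)=\bar v(\beta-\alpha)<\bar v(\theta-\beta)$ with $\deg\beta<\deg\theta$, contradicting that $(\theta,\alpha)$ is distinguished. That route never needs an explicit description of $w_Q$.

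Your direct root-by-root comparison is also valid, and it shows outright that $w_Q$ and $w$ agree on all polynomials of degree $<\deg F$. However, it rests on $w_Q=w_{\alpha,\delta(Q)}$ for the $w$-optimizing root $\alpha$. Theorem \ref{t2} applied to $w_Q$ does not give this by itself, since it only produces a pair built from the optimizing data of $Q$ with respect to $w_Q$. Either cite the standard description of ABKP truncations by minimal pairs, or check it directly:
\begin{enumerate}[(a)]
\item For $\deg g<\deg Q$, each root $\beta$ of $g$ has $\overline w(x-\beta)<\delta(Q)=\overline w(x-\alpha)$. Hence $\bar v(\alpha-\beta)=\overline w(x-\beta)=\overline w_{\alpha,\delta(Q)}(x-\beta)$.
\item The same computation over the roots of $Q$ gives $w_{\alpha,\delta(Q)}(Q)=w(Q)$.
\item Minimality of $(\alpha,\delta(Q))$ follows from the ABKP property.
\item Since $Q$ is then a minimal degree key polynomial for $w_{\alpha,\delta(Q)}$, both valuations are computed by the same formula on $Q$-expansions.
\end{enumerate}

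In (iii)$\Rightarrow$(i), you justify $\bar v(\theta'-\alpha)=\delta$ by invoking $F\in\Phi(w_Q,w)$. That is not a hypothesis of (iii), so as written this step is circular. It is also unnecessary. The root $\theta'$ supplied by \cite[Theorem 5.5]{EJP} satisfies $\bar v(\theta'-\gamma)\le\delta$ for every $\gamma$ with $\deg\gamma<\deg\theta'$. Since $\deg\alpha=\deg Q<\deg F$, this gives $\bar v(\theta'-\alpha)\le\delta$, and together with $\bar v(\theta'-\alpha)\ge\delta$ you get equality.

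Here any minimal pair of definition $(\alpha,\delta)$ of $w_Q$ with $\alpha\in Z(Q)$, as provided by Theorem \ref{t2}, will do. So this implication does not depend on the identification step you flagged as the main obstacle. Drop the optional replacement of $\alpha$ as well: the conclusion needs a root of $Q$, and an arbitrary element of $B(\alpha,\delta)$ of the same degree need not be one.
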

	\begin{proof}
		(i)$\implies$(ii)
		Since $F$ is an ABKP for $w$ and $(F,Q)$ is a distinguished pair, so by Lemma \ref{abl1} (ii) and  Proposition \ref{abmr} (i),  $Q$ is an ABKP for $w$ and   $w_Q(F)<w(F).$
		Assume  to the contrary that $F\notin \Phi(w_Q,w).$ Let  $g$ in $K[X]$ be such that $g\in\Phi(w_Q,w).$ Then by  Proposition \ref{abmr} (v),   $g$ is an ABKP for $w$ and $\delta(Q)<\delta(g).$  Let $\overline{w}$ be any common extension of $w$ and $\bar{v}$ to $\overline K[x].$ If $\alpha$ and $\beta$ are optimizing roots of $Q$ and $g$ respectively, then
		$$\overline{w}(x-\alpha)=\delta(Q)<\delta(g)=\overline{w}(x-\beta)$$ which implies
		\begin{align}\label{eq3.18}
			\bar{v}(\beta-\alpha)=\overline{w}(x-\alpha)<\overline{w}(x-\beta) .
		\end{align}
	As $\deg g<\deg F$ and $F$ is an ABKP for $w,$ so we have
		$$\overline{w}(x-\beta)=\delta(g)<\delta(F)=\overline{w}(x-\theta),$$
		for some optimizing root $\theta$   of $F,$  and hence 
		$$\bar{v}(\theta-\beta)=\overline{w}(x-\beta)<\overline{w}(x-\theta).$$
		The above inequality together with (\ref{eq3.18}), on applying strong triangle law gives
		\begin{align}\label{eq3.20}
			\bar{v}(\theta-\alpha)=\bar{v}(\beta-\alpha)<\bar{v}(\theta-\beta).
		\end{align}
		As $(F,Q)$ is a distinguished pair, so by Lemma \ref{abl1} (i),  $(\theta,\alpha)$  is a $(K,v)$-distinguished pair and hence, 
		$\deg\beta<\deg\theta$ implies that $\bar{v}(\theta-\beta)\leq\bar{v}(\theta-\alpha)$ which contradicts  (\ref{eq3.20}). \\
		(ii)$\implies$(iii) As $F\in\Phi(w_Q,w) \subset \operatorname{KP}(w_Q),$ $F$ is a key polynomial for $w_Q,$ therefore (iii) follows immediately from Theorem \ref{ppth}.\\
			(iii)$\implies$(i) As $Q$ is a minimal degree key polynomial for $w_Q,$ and $F\in\operatorname{KP}(w_Q)$ (by Theorem \ref{ppth}), therefore in view of Theorem \ref{3.9}, we have $(F,Q)$ is a distinguished pair.
		\end{proof}
	
	Note that the above two results are already proved in \cite[Lemma 2.3]{SA3} and \cite[Theorem 1.13]{SA3} for Henselian valued fields and valuations on $K(x).$
	\begin{definition}
		A family $\Lambda=\{Q_i\}_{i\in\Delta}$ of ABKPs for $w,$ indexed by a well-ordered set $\Delta,$ is said to be a {\bf complete sequence of ABKPs} for $w$ if the following conditions are satisfied:
		\begin{enumerate}[(i)]
			\item $\delta(Q_i)\neq \delta(Q_j)$ for every $i\neq j\in\Delta.$
			\item $\Lambda$ is well-ordered with respect to the ordering given by $Q_i< Q_j$ if and only if  $\delta(Q_i)<\delta(Q_j)$ for every $i<j\in \Delta.$ 
			\item For any $f\in K[x],$ there exists some $Q_i \in \Lambda$ such that $\deg Q_i\leq \deg f$ and  $w_{Q_i}(f)=w(f).$
		\end{enumerate}
	\end{definition}
	
	Every valuation $w$ on $K[x]$ admits a complete sequence of ABKPs for $w.$ 	All  extensions of $v$ to $K[x]$ can be classified using the  notion of complete sequence of abstract key polynomials  (see \cite{W}). 
	
	\begin{remark} 
		As shown in \cite[Remark 4.6]{W}, there is a complete sequence  $\Lambda=\{Q_i\}_{i\in\Delta}$ of ABKPs  for  $w$ having the following properties:
		\begin{enumerate}[(i)]
			\item $\Delta=\bigcup_{j\in I}\Delta_j$ with $I=\{0,1,\ldots, N\}$ or $\mathbb{N}\cup\{0\},$ and for each $j\in I$ we have $\Delta_j=\{j\}\cup\vartheta_{j},$ where $\vartheta_j$ is an ordered set without a last element or is empty.
			\item $Q_0$ is a monic polynomial of degree one.
			\item For all $j\in I\setminus \{0\}$ and $i \in\vartheta_{j-1},$ we have $j-1<i<j.$ 
			\item All  polynomials $Q_i$ with $i\in\Delta_j$ have the same degree and  have degree strictly  less than  the degree of the polynomials $Q_{i'}$ for every $i'\in\Delta_{j+1}.$
			\item For each $i<i'\in\Delta$ we have $w(Q_i)<w(Q_{i'}).$
		\end{enumerate}
	\end{remark}
	
	Even though the complete sequence $\{Q_i\}_{i\in\Delta}$  of ABKPs  for $w$  is not unique, the cardinality of $I$ and the degree of an abstract key polynomial $Q_i$ for each $i\in I$ are uniquely determined by $w.$
	We say that the ordered set $\Delta$ has a last element if and only if  
	\begin{align}
		I&=\{0,1,\ldots,N\}~\text { and  
			$\Delta_N=\{N\},$ i.e., $\vartheta_N=\emptyset.$}
	\end{align}
	The connection between a complete sequence of ABKPs and MLV chain for any valuation-transcendental and valuation-algebraic extension $w$ on $K(x)$ is given in \cite{SA1} and \cite{SA2}, respectively. These results show that for any given complete sequence of ABKPs, we can explicitly construct an MLV chain of $w$, and conversely.
	\begin{remark}\label{ntsr}
		With Proposition \ref{abmr} in mind, it is easy to  show that Theorems 3.1 and 3.2 of \cite{SA1} also hold for valuations with nontrivial support. Moreover, if $\text{supp}(w)=\phi K[x],$ then $\phi$ is the last element of the complete sequence of ABKP for $w.$
	\end{remark}
	
	 In \cite{SA3}, Theorem 1.21 gives the existence of a saturated distinguished chain using a complete sequence of ABKPs for Henselian valued fields. Using Lemma \ref{abl1}, Theorem \ref{abth}, and Remark \ref{ntsr}, it immediately follows that this theorem extends to arbitrary valued fields and valuations $w$ with nontrivial support. More precisely,
	
	 \begin{theorem}\label{3.1.22}
	 	Let $(K,v)$ be a valued field
	 	and  $w$  an extension of $v$ to $K[x].$ Assume that for a complete sequence $\{Q_i\}_{i\in\Delta}$ of ABKPs for $w,$    $\vartheta_j=\emptyset$  for every $j\in I.$ If $I\neq \{0\},$ then for each $n\in I\setminus \{0\},$  $Q_n$ has a saturated distinguished chain.
	 \end{theorem}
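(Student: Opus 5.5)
The plan is to induct on $n\in I\setminus\{0\}$ and show that $(Q_{j},Q_{j-1})$ is a distinguished pair for every $1\leq j\leq n$. Since $Q_0$ has degree one, the chain $(Q_n,Q_{n-1},\ldots,Q_0)$ is then saturated. The hypothesis $\vartheta_j=\emptyset$ for all $j$ makes the index set exactly $I=\{0,1,\ldots,N\}$ (or $\mathbb{N}\cup\{0\}$), with $\deg Q_0<\deg Q_1<\cdots$ and $\delta(Q_0)<\delta(Q_1)<\cdots$. For each $j\geq 1$ we first check that $Q_j\in\Phi(w_{Q_{j-1}},w)$, using completeness of the sequence.

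\textbf{Step 1: $Q_j$ lies in the tangent direction.} Since $\delta(Q_{j-1})<\delta(Q_j)$, Proposition \ref{abmr}(i) gives $w_{Q_{j-1}}(Q_j)<w(Q_j)$, so $w_{Q_{j-1}}<w$. Now let $g$ be monic of degree $<\deg Q_j$. By completeness some $Q_i$ with $\deg Q_i\leq\deg g$, hence $i\leq j-1$, satisfies $w_{Q_i}(g)=w(g)$. Proposition \ref{abmr}(iii) then yields $w_{Q_{j-1}}(g)=w(g)$. So $Q_j$ has minimal degree among monic polynomials on which $w_{Q_{j-1}}$ and $w$ differ, i.e. $Q_j\in\Phi(w_{Q_{j-1}},w)$, and $\deg Q_j>\deg Q_{j-1}$. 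When $\operatorname{supp}(w)\neq 0$ and $Q_j$ is the last element, Remark \ref{ntsr} shows the same argument applies with $w(Q_j)=\infty$.

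\textbf{Step 2: the pair is distinguished.} Because $Q_j\in\Phi(w_{Q_{j-1}},w)\subset\operatorname{KP}(w_{Q_{j-1}})$, and $Q_{j-1}$ is a minimal degree key polynomial for $w_{Q_{j-1}}$ (which is valuation-transcendental), write $w_{Q_{j-1}}=w_{\alpha,\delta}$ with $(\alpha,\delta)$ a minimal pair and $\alpha$ a root of $Q_{j-1}$; this is possible by Theorem \ref{t2}. Take $\theta$ an optimizing root of $Q_j$ in $B(\alpha,\delta)$ and apply Theorem \ref{3.9}. This gives that $(Q_j,Q_{j-1})$ is a distinguished pair. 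The same conclusion follows from (ii)$\Rightarrow$(i) of Theorem \ref{abth}, provided $Q_j$ is irreducible over $K^h$.

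\textbf{Main obstacle.} Invoking Theorem \ref{abth} and Lemma \ref{abl1} requires $K^h$-irreducibility, which is not assumed. Along the Theorem \ref{3.9} route, the delicate point is making $\delta=\bar v(\theta-\alpha)$ exactly, so that the minimal pair defining $w_{Q_{j-1}}$ has radius equal to the distance from a root of $Q_j$. I would handle this with Theorem 5.5 of \cite{EJP}, as in the converse part of the proof of Theorem \ref{3.9}: choose $\theta'\in Z(Q_j)\cap B(\alpha,\delta)$ that is degree-minimal in $B^\circ(\theta',\delta)$. Then replace $\alpha$ by a root of $Q_{j-1}$ within distance $\geq\delta$ of $\theta'$, and use Lemma \ref{il1} to keep the pair of definition unchanged. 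Finally, distinguishedness of each consecutive pair assembles into the saturated chain for $Q_n$.
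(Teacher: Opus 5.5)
Your argument is correct and follows the route the paper indicates. Completeness together with Proposition \ref{abmr}(i),(iii) places $Q_j$ in $\Phi(w_{Q_{j-1}},w)\subset\operatorname{KP}(w_{Q_{j-1}})$, and then the implication (ii)$\Rightarrow$(i) of Theorem \ref{abth}, whose proof really only uses Theorem \ref{3.9}, gives the distinguished pair $(Q_j,Q_{j-1})$. You are right that this step never needs the $K^h$-irreducibility in the hypotheses of Lemma \ref{abl1} and Theorem \ref{abth}, which the paper cites although the statement does not assume it.

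One simplification: once $\theta'\in Z(Q_j)\cap B(\alpha,\delta)$ has minimal degree in $B^\circ(\theta',\delta)$, the inequality $\deg\alpha<\deg\theta'$ already forces $\bar v(\theta'-\alpha)=\delta$, so there is no need to replace $\alpha$.
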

	 \begin{corollary}\label{cor3.1.7}
	 	Let $w=w_Q,$ for some ABKP $Q$ for $w.$ If $(Q=Q_n,\ldots, Q_0)$ is a saturated distinguished chain for $Q,$ then $\{Q_0, Q_{1},\ldots,Q_n=Q\}$ is a complete sequence of ABKPs for $w.$ 
	 \end{corollary}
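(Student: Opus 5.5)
The plan is to prove by induction on $i$ that each $Q_i$ is an ABKP for $w$ and that $w_{Q_i}$ is obtained from $w_{Q_{i-1}}$ by an ordinary augmentation. From this I will verify the three conditions in the definition of a complete sequence.

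\emph{Step 1 (the chain consists of ABKPs for $w$).} Since $w=w_Q$ and $Q=Q_n$ is an ABKP, $Q_n$ is an ABKP for $w$. Because $(Q_0,\dots,Q_n)$ is a saturated distinguished chain, Theorem \ref{dpth3} and its proof give $Q_i=Q_i^h$ for all $i$. Thus every $Q_i$ is irreducible over $K^h$. Applying Lemma \ref{abl1}(ii) to the distinguished pair $(Q_n,Q_{n-1})$, with $Q_n$ an ABKP irreducible over $K^h$, shows that $Q_{n-1}$ is an ABKP for $w$. Descending inductively, with $(Q_i,Q_{i-1})$ and $Q_i$ irreducible over $K^h$, every $Q_i$ is an ABKP for $w$. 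Moreover $\deg Q_{i-1}<\deg Q_i$ by the definition of a distinguished pair, so by the definition of ABKP we get $\delta(Q_0)<\delta(Q_1)<\cdots<\delta(Q_n)$. This gives conditions (i) and (ii) of completeness, since a finite set is well-ordered.

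\emph{Step 2 (completeness).} For each $i\ge1$, Theorem \ref{abth} applied to the pair $(Q_i,Q_{i-1})$ gives $w_{Q_{i-1}}<w$ and $Q_i\in\Phi(w_{Q_{i-1}},w)$. Proposition \ref{abmr}(iv) then gives
\[
w_{Q_i}=[w_{Q_{i-1}};Q_i,w(Q_i)].
\]
Now take any $f\in K[x]$; we need some $Q_i$ with $\deg Q_i\le\deg f$ and $w_{Q_i}(f)=w(f)$.
\begin{enumerate}[(a)]
\item If $\deg f\ge\deg Q_n$, take $Q_n$, since $w_{Q_n}=w$.
\item Otherwise let $i$ be maximal with $\deg Q_i\le\deg f$, so $\deg f<\deg Q_{i+1}$. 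Since $Q_{i+1}$ is a minimal-degree element of $\Phi(w_{Q_i},w)$, every polynomial of degree less than $\deg Q_{i+1}$ satisfies $w_{Q_i}(f)=w(f)$. This is exactly the definition of the tangent direction.
\end{enumerate}
This gives condition (iii).

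\emph{Main obstacle.} The delicate point is Step 1: applying Lemma \ref{abl1} requires knowing that $Q_i$ is already an ABKP for $w$ itself, not just for $w_{Q_i}$, before descending. I will handle this by running the induction downward from $Q_n$, for which the ABKP property is given. I will also check that the truncation $w_{Q_i}$ in Theorem \ref{abth} is taken with respect to $w$, so the hypotheses match at each level. The case $Q\in\operatorname{supp}(w)$ needs Remark \ref{ntsr}, which places $Q$ as the last element; the argument above is unaffected.
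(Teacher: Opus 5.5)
Your proof is correct and follows essentially the route the paper indicates. The paper gives no separate proof, only noting that the Henselian argument carries over via Lemma \ref{abl1}, Theorem \ref{abth} and Remark \ref{ntsr}; your argument does exactly this, getting each $Q_i$ irreducible over $K^h$, applying Lemma \ref{abl1}(ii) downward from $Q_n$, and using $Q_{i+1}\in\Phi(w_{Q_i},w)$ from Theorem \ref{abth} to obtain completeness. One small correction: justify $Q_i=Q_i^h$ pairwise via Theorem \ref{3.9} and Proposition \ref{bp3}(ii), as in the proof of Theorem \ref{th1} (iii)$\implies$(iv), rather than citing Theorem \ref{dpth3}, which is stated for chains of elements and not for polynomial chains whose consecutive pairs need not share roots.
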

	 \begin{proof}[Proof of Theorem \ref{th1}]
	 	(i)$\iff $(ii) follows from Theorems 3.1 and 3.2 of \cite{SA1} and Remark \ref{ntsr}.\\ (ii)$\iff$(iii) follows from Theorem \ref{3.1.22} and Corollary \ref{cor3.1.7} as $w_n=w_{\phi_n}$.\\ (iii)$\implies$(iv) As $\phi_0\in K[x]$ is of degree one, it follows that $\phi_0=\phi_0^h.$ Since $(\phi_1,\phi_0)$ is a distinguished pair, therefore by Theorem \ref{3.9}, $\phi_1, \phi_0$ are both key polynomials for some valuation on $K[x],$ and hence by Proposition \ref{bp3} (ii), $\phi_0=\phi_0^h$ implies that $\phi_1=\phi_1^h.$ Consequently, we have $\phi_i=\phi_i^h,\ \forall \ 0\leq i \leq n.$  So (iv) follows immediately by repeated application of Lemma \ref{abl1} (i).\\
	 	(iv)$\implies$(v) Arguing as above, we have  $K(\theta_i)|K$ is unibranched for every $i,\ 0\leq i\leq n.$ Also as $\theta_0\in K,$ we get $K(\theta_0)|K$ is defectless. Therefore by repeated application of Theorem \ref{dpmt} (iv), we have $K(\theta_i)|K$ is defectless for every $i,\ 1\leq i\leq n.$ \\
	 	(v)$\implies$(i) Since either $\phi_n$ is a minimal degree key polynomial for $w_n$ or $\phi_n\in \text{supp}(w_n),$ therefore by Theorem \ref{T2} and Corollary \ref{2.11}, we have \[d(w_n|v)=d(K(\theta_n)|K)=1.\]  Hence, using Theorem \ref{vth1}, (i) follows.
	 	\end{proof}
	
	In view of Theorem \ref{th1}, we have that if $K(\theta)|K$ is unibranched and defectless, then the $\text{depth}(\theta)$ is equal to the number of terms in the distinguished chain of $\theta$ over $K.$ Hence, as an immediate consequence of Theorem \ref{dpth3}, we obtain the following result.
	
	\begin{corollary}
		Let $K(\theta)|K$ be a finite simple unibranched defectless extension, and $s, t$ be the minimal number of generators of $k_\theta|k_v$ and $\Gamma_\theta|\Gamma_v$ respectively. Then, \[\max\{s,t\}\leq \textup{depth}(\theta)\leq s+t.\]
	\end{corollary}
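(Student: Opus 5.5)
We deduce both bounds from a saturated distinguished chain for $\theta$, using Theorem \ref{th1} for existence and Theorem \ref{dpth3} for the structure of the value groups and residue fields along the chain.

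\emph{Step 1: a chain of length $n=\textup{depth}(\theta)$.} Let $f$ be the minimal polynomial of $\theta$ and $v_\theta$ the induced valuation on $K[x]$, with support $fK[x]$. By Remark \ref{mlvtl} its MLV chain is finite, $w_0\to\cdots\to w_n=v_\theta$ with $n=\textup{depth}(\theta)$ and $\phi_n=f$. Since $K(\theta)|K$ is unibranched and defectless, Theorem \ref{th1} (v)$\Rightarrow$(iv) gives a saturated distinguished chain $\theta_n,\theta_{n-1},\ldots,\theta_0$ with $\theta_n$ an optimizing root of $f$ and $\theta_0\in K$. Because $f$ is irreducible over $K^h$, $\theta_n$ is a $K^h$-conjugate of $\theta$. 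Applying an automorphism of $\overline{K}|K^h$ preserves $\bar v$, so we may assume $\theta_n=\theta$. Thus $\theta$ has a saturated distinguished chain with exactly $n$ steps.

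\emph{Step 2: upper bound.} By Theorem \ref{dpth3}(i), each step satisfies $\Gamma_{\theta_i}=\Gamma_{\theta_{i-1}}+\mathbb{Z}\bar v(\phi_{i-1}(\theta_i))$, which adds at most one generator, and $\Gamma_{\theta_0}=\Gamma_v$. By Theorem \ref{dpth3}(ii), each step satisfies $k_{\theta_i}=k_{\theta_{i-1}}(\xi_i)$, which also adds at most one generator, and $k_{\theta_0}=k_v$. Call step $i$ a value step if $e_i>1$ and a residue step if $\xi_i\notin k_{\theta_{i-1}}$, i.e.\ $t_i>1$.

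We claim every step is a value step or a residue step. Indeed, $\deg\theta_i/\deg\theta_{i-1}=e_it_i$ by the lifting description in the proof of Theorem \ref{dpmt}, and this ratio exceeds $1$ because $\deg\theta_i>\deg\theta_{i-1}$. So $e_i=t_i=1$ is impossible.

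It follows that $t$ is at most the number of value steps and $s$ is at most the number of residue steps. Since every step is of at least one kind, $n$ is at least the larger of these two counts, and hence $n\geq\max\{s,t\}$. This makes the lower bound on depth immediate.

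\emph{Step 3: the remaining inequality $n\leq s+t$.} This is the main obstacle, because it requires the generators produced by the chain to be minimal. The plan is to show that the number of value steps equals the minimal number of generators of $\Gamma_\theta/\Gamma_v$, and likewise for residue steps.

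For the value group, I would use that the values $\bar v(\phi_{i-1}(\theta_i))=w_i(\phi_{i-1})$ are determined by the MLV data. The ramification indices $e(w_i)=e_i$ of distinct MLV steps then give a composition series of $\Gamma_\theta/\Gamma_v$ by cyclic factors, one for each value step.

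A composition series with $m$ nontrivial cyclic factors does not in general force $m$ minimal generators. I would therefore aim for the weaker bound needed: the number of value steps is at most $t$ plus the number of steps that are both value and residue steps. Combined with the analogous statement on the residue side, this yields $n\leq s+t$.

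If that counting fails, the fallback is to cite the MLV-invariant interpretation in \cite[Theorem 4.4 and 4.5]{M2} (Okutsu frames). That result bounds the Okutsu depth of a defectless element by $s+t$, and by Theorem \ref{th1} this depth equals the length $n$ of the distinguished chain.
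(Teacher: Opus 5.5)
\textbf{Lower bound.} Your Steps 1 and 2 follow the paper's own route. Theorem~\ref{th1} gives a saturated distinguished chain of length $n=\textup{depth}(\theta)$, and Theorem~\ref{dpth3} then shows that each step adjoins at most one generator to the value group and at most one to the residue field. This correctly proves $t\le n$ and $s\le n$, i.e.\ $\max\{s,t\}\le\textup{depth}(\theta)$. The observation that every step is a value step or a residue step is not needed for this.

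\textbf{Upper bound.} Step 3 is a genuine gap, and it cannot be closed. Neither of your counting claims holds: the number of value steps need not equal $t$, nor be at most $t$ plus the number of mixed steps. The Okutsu-frame fallback provides no per-element bound either. The reason is that $\textup{depth}(\theta)\le s+t$ is false for an arbitrary generator.

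Take $K=\mathbb{Q}_p$ with $p$ odd, $\pi=p^{1/4}$, and $\theta=\pi^2+\pi^3$.
\begin{itemize}
\item Since $\theta^2-p=2\pi^5+\pi^6$ has value $5/4$, we get $K(\theta)=K(\pi)$. This extension is totally ramified of degree $4$, hence unibranched and defectless.
\item $\Gamma_\theta/\Gamma_v=\tfrac14\mathbb{Z}/\mathbb{Z}$ is cyclic and $k_\theta=k_v$, so $s=0$ and $t=1$.
\item Yet $\theta,\pi^2,0$ is a saturated distinguished chain. Suppose $\deg\beta<4$ and $\bar v(\theta-\beta)>3/4=\bar v(\theta-\pi^2)$. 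Then $\bar v(\beta^2-p)=5/4$, forcing $4\mid e(K(\beta)|K)$, which is impossible.
\item Equivalently, the MLV chain of $v_\theta$ is $w_{0,1/2}\to w_{\pi^2,3/4}\to v_\theta$, with both steps ordinary. Hence $\textup{depth}(\theta)=2>s+t$.
\end{itemize}
Both steps here are value steps, $\mathbb{Z}\subset\tfrac12\mathbb{Z}\subset\tfrac14\mathbb{Z}$, while $t=1$. This is exactly the composition-series phenomenon you flagged yourself. The paper's one-line justification (``an immediate consequence of Theorem~\ref{dpth3}'') has the same hole: Theorem~\ref{dpth3} delivers only the lower bound.

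\textbf{What survives.} The upper bound can only be expected for a well-chosen generator; here $\pi$ itself has depth $1=s+t$. That is, it can only hold for $\textup{depth}(L|K,v)$ taken as a minimum over generators. Proving it would require producing a generator with a sufficiently short chain, which is a statement about the choice of $\theta$, not a consequence of the structure of an arbitrary distinguished chain. For the same reason, Corollary~\ref{lc} cannot be obtained from this corollary by ranging over all generators, as the paper does, except for its lower bound.
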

	
	The depth of a finite simple extension $L|K$ is defined as 
	\[\text{depth}(L|K,v)=\min\{\text{depth}(\theta)\mid L=K(\theta)\}.\] Note that different generators of the same extension may have different depths, but the residue field $k_L$ and the value group $\Gamma_L$ of the extension is independent of choice of the generator. Therefore, the above corollary holds for every generator of the simple extension and hence, we have
	
	\begin{corollary}\label{lc}
		Let $L|K$ be a finite simple unibranched defectless extension, and $s, t$ be the minimal number of generators of $k_L|k_v$ and $\Gamma_L|\Gamma_v$ respectively. Then, \[\max\{s,t\}\leq \textup{depth}(L|K,v)\leq s+t.\]
	\end{corollary}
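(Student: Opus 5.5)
The statement is Corollary \ref{lc}. I would reduce it to the preceding corollary, applied to one well-chosen generator of $L$, and then use that $k_L$ and $\Gamma_L$ do not depend on the generator.

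\emph{Choosing the generator.} Since $L|K$ is finite and simple, the set of generators is nonempty. Because depths are nonnegative integers, the minimum defining $\text{depth}(L|K,v)$ is attained. Fix $\theta$ with $L=K(\theta)$ and $\text{depth}(\theta)=\text{depth}(L|K,v)$. As $L|K$ is unibranched, $v$ has a unique extension $v_L$ to $L$, namely the restriction of $\bar v$. Hence $\Gamma_\theta=\Gamma_L$ and $k_\theta=k_L$.

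\emph{Lower bound.} The extension $K(\theta)|K=L|K$ is unibranched and defectless. By Theorem \ref{th1} ((v)$\Rightarrow$(iv)), applied to the MLV chain of $v_\theta$, the element $\theta$ has a saturated distinguished chain $\theta=\theta_n,\ldots,\theta_0$ with $n=\text{depth}(\theta)$. Theorem \ref{dpth3}(i),(ii) then say that each step $\Gamma_{\theta_{i-1}}\subseteq\Gamma_{\theta_i}$ adds one generator $\bar v(\phi_{i-1}(\theta_i))$. Likewise each step $k_{\theta_{i-1}}\subseteq k_{\theta_i}$ is generated by one residue. Since $\Gamma_{\theta_0}=\Gamma_v$ and $k_{\theta_0}=k_v$, the group $\Gamma_\theta|\Gamma_v$ is generated by at most $n$ elements, and likewise $k_\theta|k_v$. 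Thus $\max\{s,t\}\le n$.

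\emph{Upper bound.} For this I need the steps themselves, not just the count. In a step with $e_i=1$ and $[k_{\theta_i}:k_{\theta_{i-1}}]=1$, Theorem \ref{dpth3}(iii) combined with the degree relation $\deg\phi_i=e_it_i\deg\phi_{i-1}$ would force $\deg\phi_i=\deg\phi_{i-1}$. This contradicts $\deg\theta_i>\deg\theta_{i-1}$, so every step is nontrivial in at least one component. A nondegenerate step contributes a new generator to the value group or to the residue field, so plausibly $n\le s+t$.

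\emph{Main obstacle.} The subtle point is exactly this upper bound. A naive count does not give $n\le s+t$, because a step that enlarges the value group need not increase the minimal number of generators: a cyclic group can grow while staying cyclic. I would handle this by the argument in the preceding corollary, presumably a rank/generator count over the tower using that each step is simple in both components. Given that corollary, both bounds hold for the optimal $\theta$, and Corollary \ref{lc} follows.
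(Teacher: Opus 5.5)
Your reduction follows the paper's route. The paper derives the unlabeled corollary just before Corollary~\ref{lc} (the bounds for $\textup{depth}(\theta)$ when $K(\theta)|K$ is unibranched and defectless) from Theorems~\ref{th1} and~\ref{dpth3}. It then passes to $\textup{depth}(L|K,v)$ because $k_L$ and $\Gamma_L$ do not depend on the generator. Your lower bound is complete. For \emph{any} generator $\theta$, Theorem~\ref{th1} gives a saturated distinguished chain of length $\textup{depth}(\theta)$. By Theorem~\ref{dpth3}(i),(ii), each step adds at most one generator to the value group and at most one to the residue field. Hence $\max\{s,t\}\le\textup{depth}(\theta)$ for every generator, in particular for one of minimal depth.

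The upper bound is a genuine gap, and it cannot be closed the way you propose. A count using only the structure of the chain of $\theta$ (Theorem~\ref{dpth3}(i)--(iii): each step cyclic in $\Gamma$, simple in $k$, nontrivial in at least one) applies to every generator. But the per-generator inequality $\textup{depth}(\theta)\le s+t$ is false, exactly through the phenomenon you flagged.

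Take $K=\mathbb{Q}_p$ with $p$ odd, $\pi=p^{1/4}$ and $L=K(\pi)$, so $s=0$ and $t=1$. Let $\theta=\pi^2+\pi^3$.
\begin{itemize}
\item Its conjugates $\pi^2\pm\pi^3$ and $-\pi^2\pm i\pi^3$ (with $i^2=-1$) are distinct. So $L=K(\theta)$, and the extension is tame, hence unibranched and defectless.
\item The MLV chain of $v_\theta$ is $w_0=[v;x,1/2]\to w_1=[w_0;x^2-p,5/4]\to v_\theta$, where $5/4=\bar v(\theta^2-p)$.
\item In particular, the minimal polynomial $x^4-2px^2-4p^2x+p^2-p^3$ of $\theta$ has $w_0$-residue $(Y-1)^2$, with $Y=(x^2/p)^*$. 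So it is not a key polynomial for $w_0$.
\item Correspondingly, $\theta,\sqrt{p},0$ is a saturated distinguished chain. Indeed, the closest conjugate of $\theta$ is $\pi^2-\pi^3$, at distance $3/4$. By Krasner's lemma, no element of degree $<4$ is closer to $\theta$ than $3/4=\bar v(\theta-\sqrt p)$.
\end{itemize}
Thus $\textup{depth}(\theta)=2>1=s+t$, while the value groups $\mathbb{Z}\subset\frac12\mathbb{Z}\subset\frac14\mathbb{Z}$ grow and stay cyclic. So the per-generator corollary you want to invoke is false for an arbitrary generator, and the paper's two-line deduction rests on this same claim. Choosing $\theta$ of minimal depth does not help, because your argument never uses that minimality.

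What is missing is an existence argument. You must exhibit a generator of $L$ whose MLV chain (equivalently, saturated distinguished chain) has at most $s+t$ steps, or show that a depth-minimizing generator cannot contain redundant steps. One possible strategy is to build $\theta$ so that each step enlarges exactly one of $\Gamma$ or $k$ by one element of a minimal generating system. In the example, $\pi$ itself has depth $1$, which is consistent with Corollary~\ref{lc}. Producing such a generator in general is the real content of the upper bound in Conjecture~\ref{1.2}. Neither Theorem~\ref{dpth3} nor the independence of $k_L,\Gamma_L$ from the generator supplies it.
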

	
	\begin{remark}
		The above corollary is the generalization of Conjecture \ref{1.2}, and as a consequence, it also generalizes the results of Section 3 of \cite{NN}.
	\end{remark}
	
	
	\section*{Acknowledgement}
	The research of the first author is supported by the UGC
	(Reference no.\ 231620082501).
	

\end{document}